\definecolor{webgreen}{rgb}{0,.5,0}
\definecolor{webbrown}{rgb}{.8,0,0}
\definecolor{emphcolor}{rgb}{0.95,0.95,0.95}
\ifpdf \hypersetup{pdftex,
            pdfstartview=FitH, 
            bookmarksopen=true,
            bookmarksnumbered=true
} \else \hypersetup{dvips} \fi
\newcommand{\lapinv}{\Phi(q)}
\newcommand {\B}{\mathcal{B}}
\numberwithin{equation}{section}
\newtheorem{theorem}{Theorem}[section]
\newtheorem{proposition}{Proposition}[section]
\newtheorem{remark}{Remark}[section]
\newtheorem{lemma}{Lemma}[section]
\newtheorem{assump}{Assumption}[section]
\numberwithin{remark}{section} \numberwithin{proposition}{section}
\numberwithin{corollary}{section}
\newcommand {\R}{\mathbb{R}}
\newcommand {\N}{\mathbb{N}}
\newcommand {\p}{\mathbb{P}}
\newcommand {\E}{\mathbb{E}}
\newcommand{\diff}{{\rm d}}
\newcommand{\lev}{L\'{e}vy }
\title[Games of singular control and stopping]{Games of singular control and stopping driven by spectrally one-sided L\'{E}VY processes}
\thanks{This version: \today. }
\thanks{$*$\, Centro de Investigaci\'on en Matem\'aticas, Apartado Postal 402, Guanajuato GTO 36000, Mexico. Email: dher@cimat.mx}
\thanks{$\dagger$\, (corresponding author) Department of Mathematics,
Faculty of Engineering Science, Kansai University, 3-3-35 Yamate-cho, Suita-shi, Osaka 564-8680, Japan. Email: \mbox{{\em
kyamazak@kansai-u.ac.jp}}.  Tel: +81-6-6368-1527. }
\author[D.\ Hern\'andez-Hern\'andez]{Daniel Hern\'andez-Hern\'andez$^*$}
\author[K. Yamazaki]{Kazutoshi Yamazaki$^\dagger$}
\date{}
\begin{document}

\begin{abstract}
We study a zero-sum game where the evolution of a spectrally one-sided \lev process  is modified by a singular controller and is terminated by the stopper.  The singular controller minimizes the expected values of running, controlling and terminal costs while the stopper maximizes them.   Using fluctuation theory and scale functions, we derive a saddle point and the value function of the game.  Numerical examples under phase-type \lev processes are also given.
\end{abstract}

\maketitle
{\noindent \small{\textbf{Keywords:}\,  controller-and-stopper games;  \lev processes; scale functions; continuous and smooth fit}\\
\noindent \small{\textbf{Mathematics Subject Classification (2010):}\, 91A15, 60G40, 60G51, 93E20}}

\section{Introduction}
We study the \lev model of the zero-sum game between a singular controller and a stopper in the framework of Hern\'andez-Hern\'andez et al.\ \cite{Hernandez_Simon_Zervos_2012}.  Based on the information provided by a given \lev process $X$, the \emph{controller} modifies the evolution of $X$ while the \emph{stopper} decides to  terminate the game.  More precisely,  the former chooses a nondecreasing control process $\xi$ so as to minimize the expected costs that depend on the path of the controlled process $U^\xi := X + \xi$ while the latter chooses a stopping time $\tau$ to maximize them.  The costs consist of the running cost $\int_0^\tau e^{-qt} h (U_t^\xi) \diff t$, the controlling cost $\int_{[0,\tau]} e^{-qt} \diff \xi_t$  as well as the terminal cost $e^{-q\tau}g(U_\tau^\xi)$.  The objective is to identify a pair of strategies $(\xi^*, \tau^*)$ that constitute a {\it saddle point} (\emph{Nash equilibrium}). 

Games of singular control and stopping arise in problems where both players are interested  in maintaining the state of the system in some region. Applications of the problem studied here include the following example adapted from the monotone follower problems (see, e.g., \cite{Karatzas_Shreve_1984}): if the process $-X$ represents a random demand of some specific good, the controller wants to prevent the lack of the product, while the stopper wants to prevent excess of inventory.  Here, the process $\xi$ can represent the amount of accumulated capital   needed to meet the demand.   Other applications in the field of finance and insurance can be found in, among others, \cite{Karatzas_Wong_2000} and \cite{Bayraktar_Young_2011}.

Another potential application lies in robust control, where the decision maker wants to optimize the worst case expectation when there is an ambiguity in the model specification.   One way to model this is to include a ``malevolent second agent", who acts adversely to the decision maker (see the introduction of Hansen et al.\ \cite{Hansen_2006}).  This reduces to solving the two-player zero-sum game.  In the light of our problem, it can be seen as a robust optimization of a singular controller when there is an ambiguity in the time-horizon; this can also be seen as a robust optimal stopping problem in the existence of misspecification  of the underlying process.   Regarding this matter, we refer the readers to, among others, \cite{Hansen_2001, Hansen_2006} for the robust control problems written in terms of games.

The version of the controller-stopper problem this paper is focusing on was first studied by \cite{Hernandez_Simon_Zervos_2012} where they focus on a one-dimensional diffusion model.  They derived a set of variational inequalities that fully characterize the saddle point and the associated value function. They also gave examples that admit explicit solutions. Although the variational inequalities and relevant theories of singular control and optimal stopping problems are well studied, when their games are considered,  existing results are rather limited and there are very few cases that are known to be solved explicitly. For different formulations of the game between a controller and a stopper, we refer the reader to \cite{Maitra_Sudderth_1996} and \cite{Karatzas_Sudderth_2001},   and references therein. In the former they studied controller-stopper games in discrete time, while the latter considers the problem of optimally modifying the drift and  diffusion coefficients of a Brownian motion  using absolutely continuous controls.   See also \cite{Davis_Zervos_1994} for the solution of a  singular optimal control problem with an arbitrary stopping time.

In this paper, we consider the aforementioned problem focusing on the case when $X$ is a spectrally one-sided (spectrally negative or positive) \lev process.   Without the continuity of paths, the problem naturally tends to be more challenging because one needs to take into account the overshoots when the process   up-crosses or down-crosses a given barrier. Nonetheless, by taking advantage of the recent advances in the theory of \lev processes, this can be handled for spectrally one-sided L\'evy
processes using so-called \emph{scale functions}.  We show, under appropriate conditions on the functions $h$ and $g$, that a  saddle point and the associated value function can be identified.

 In order to achieve our goals, we take the following steps:
\begin{enumerate}
\item We first conjecture that the optimal strategies are of barrier-type; namely,  the controlled process and the stopping time at  equilibrium are a \emph{reflected \lev process} and a \emph{first up/down-crossing time} of a certain level, respectively.  By focusing on the set of these strategies, the associated expected cost functional is written in terms of the scale function.
\item We then identify the reflection-trigger level $a^*$ as well as the stopping-trigger level $b^*$ by using the continuous/smooth fit principle.  Namely, the values of $(a^*, b^*)$ are chosen so that the corresponding expected cost functional is continuous (and/or smooth)  at $a^*$ and $b^*$ simultaneously.  The smoothness of the resulting value functions differ according to the path variation of $X$ and whether a spectrally negative or positive \lev process is considered.
\item  We finally verify the optimality of the candidate value function by showing that it satisfies the variational inequality and that it is indeed a sufficient condition for optimality.
\end{enumerate}

These steps in the above solution methods are motivated by recent papers on optimal stopping/control problems for spectrally one-sided \lev processes.   In these papers, the scale function is commonly used as a proxy to the underlying process so as to write efficiently the expected cost corresponding to a barrier strategy.  The optimal barrier is then determined by the continuous/smooth fit condition together with the known continuity/smoothness property of the scale function that differs according to the path variation of the process.   The verification of optimality is carried out by the martingale (harmonic) property of the scale function.   These methods avoid the use of techniques of integro partial differential equations (IPDEs) to prove existence and smoothness of the solution of the associated HJB equation; this tends to be hard to solve except for special cases (e.g.\ when the process has i.i.d.\ exponential jumps).
 For papers regarding spectrally one-sided \lev processes and their applications, we refer the reader to, among others, \cite{Alili2005, Avram_2004, Leung_Yamazaki_2010} for optimal stopping problems in finance, \cite{Avram_et_al_2007,Bayraktar_2012,Kyprianou_Palmowski_2007,Loeffen_2008} for singular control  problems in insurance, and \cite{BauKyrianouPardo2011, Baurdoux2008,  Leung_Yamazaki_2011} for optimal stopping games.  To our best knowledge, this paper is the first attempt to solve the game between a controller and a stopper for spectrally one-sided \lev processes.

We take advantage of the fluctuation theory and scale function for spectrally negative and spectrally positive \lev cases. However these two cases are analyzed separately in different sections because the behavior of these processes is indeed significantly different.  We require different assumptions and in addition the smoothness of the derived value function is different.   There also is an interesting difference that while $a^* = b^*$ can happen for the spectrally positive \lev case, it cannot happen for the spectrally negative \lev case.  This is caused due to the fact that while the lower boundary of an open set is regular (see Definition 6.4 of \cite{Kyprianou_2006}) for a spectrally negative \lev process, it is not so for the case of  a spectrally positive \lev process when it has paths of bounded variation; we discuss more in detail in Remark \ref{remark_a_equals_b} below.  It should be remarked that there also is a similarity, for example, in that for both cases the value function becomes convex.

The derived saddle point and the associated value function are expressed in terms of the scale function for both the spectrally negative and positive cases.   Hence, computing these values reduce to computing the scale function. In general, this cannot be done explicitly, but there
are numerical procedures that allow one to approximate them, for example, by Laplace inversion \cite{Kuznetsov2013, Surya_2008} or by phase-type fitting \cite{Egami_Yamazaki_2010_2}.  In this paper, we  use the latter and illustrate how the values of $(a^*, b^*)$ and the value function can be computed.  We also show the impact of the parameters that describe the functions $h$ and $g$ on the value function.

The rest of the paper is organized as follows.   In Section \ref{section_formulation}, we give a mathematical formulation of the game. In Section \ref{section_preliminary},  we review the spectrally one-sided \lev process and its scale function.  In Sections \ref{section_spectrally_negative} and \ref{section_spectrally_positive},  respectively, using the method described above, the spectrally negative and positive formulation is solved.  In Section  \ref{section_numerics}, we give numerical results for both the spectrally negative and positive cases.
Throughout the paper, superscripts $x^+ := \lim_{y \downarrow x}$ and $x^-  := \lim_{y \uparrow x}$ are used to indicate the right and left limits, respectively.  The subscripts $x_+ := \max(x, 0)$ and $x_- := \max(-x, 0)$ are used to indicate positive and negative parts.

\section{Model and problem formulation} \label{section_formulation}
Let $(\Omega, \mathcal{F}, \p)$ be a probability space hosting a \lev process $X = \left\{X_t; t \geq 0 \right\}$. Let $\mathbb{P}_x$ be the conditional probability under which $X_0 = x$ (also let $\mathbb{P} \equiv \mathbb{P}_0$), and let $\mathbb{F}^0 := \left\{ \mathcal{F}^0_t: t \geq 0 \right\}$ be the  filtration generated by $X$. We denote by $\mathbb{F}:= \left\{ \mathcal{F}_t: t \geq 0 \right\}$  the completed filtration of $\mathbb{F}^0$
with the $\p$-null sets of $\mathcal{F}$.

The games analyzed in this paper consist of  two players. The controller chooses a  process $\xi\in \Xi$, where $\Xi$ denotes the set of \emph{nondecreasing} and  \emph{right-continuous}  $\mathbb{F}$-adapted processes with $\xi_{0^-} := 0$, while the stopper chooses the time $\tau \in \Upsilon$ among the set of $\mathbb{F}$-stopping times $\Upsilon$.  The controller minimizes and the stopper maximizes the common performance  criterion:
\begin{align*}
J(x;\xi, \tau) := \E_x \left[ \int_0^\tau e^{-qt} h(U_t^\xi) \diff t + \int_{[0,\tau]} e^{-qt} \diff \xi_t + e^{-q \tau} g(U_\tau^\xi) 1_{\{ \tau < \infty \}}\right], \quad x \in \R, \; \xi \in \Xi \; \textrm{and } \tau \in \Upsilon,
\end{align*}
where  $U^\xi$ is a right-continuous controlled process defined by
\begin{align*}
U_t^\xi := X_t + \xi_t, \quad t \geq 0.
\end{align*}
The problem is to show the existence of a \emph{saddle point}, or equivalently a \emph{Nash equilibrium}  $(\xi^*, \tau^*) \in \Xi \times \Upsilon$, such that $\tau^*$ is the \emph{best response} given $\xi^*$ while at the same time $\xi^*$ is the best response given $\tau^*$.

More precisely, we shall show under a suitable condition that a saddle point is given by a pair of \emph{barrier strategies} $(\xi^a, \tau_{a,b})$ for some $a < b$, where we define
\begin{align} \label{the_strategies}
\begin{split}
\xi^{a}_t &:= \sup_{0 \leq t' \leq t} (a-X_{t'}) \vee 0, \quad t \geq 0, \\
\tau_{a,b}&:= \inf \{ t \geq 0: U_t^{\xi^a} > b \}.
\end{split}
\end{align}
In this case, the controlled process $U^{\xi^a} = X + \xi^{a}$ is a reflected \lev process, whose fluctuation theory has been studied in, for example, \cite{Avram_et_al_2007,Pistorius_2004}; see Figures \ref{sample_path_neg} and \ref{sample_path_pos} below for its sample paths when $X$ is a spectrally one-sided \lev process. The corresponding performance criterion reduces to
\begin{align} \label{J_a_b_def}
J_{a,b}(x) := J(x;\xi^a, \tau_{a,b}) = \E_x \left[ \int_0^{\tau_{a,b}} e^{-qt} h(U_t^{\xi^a}) \diff t + \int_{[0,\tau_{a,b}]} e^{-q t} \diff \xi_t^a + e^{-q \tau_{a,b}} g(U_{\tau_{a,b}}^{\xi^a}) 1_{\{ \tau_{a,b}< \infty \}} \right].
\end{align}
The objective is to show, upon appropriate choices of $a=a^*$ and $b=b^*$,
\begin{align}\label{Nash}
J(x;\xi^{a^*},\tau) \leq J_{a^*,b^*}(x)  \leq J(x;\xi,\tau_{b^*}^\xi), \quad x \in \R,
\end{align}
for any $\tau\in\Upsilon$ and  for any $\xi\in \Xi$, where we define $\tau_{b^*}^\xi:=\inf \{ t \geq 0: U_t^{\xi} > b^* \}$.


Intuitively, for such strategies to be a saddle point, it is suggested that $h$ decreases while $g$ increases, or more generally $h$ decreases faster than $g$ decreases.  See Assumptions \ref{assump_h_g} and \ref{assump_h_g_SP} below for the assumptions we make for the spectrally negative and positive case, respectively.

In addition, we need to choose the slope of $g$ carefully in view of the controlling cost derived by $\{ \diff \xi^a_t; t \geq 0\}$ as well.
Throughout the paper we assume that $g$ is affine and its slope is larger than $-1$ due to the reason described below.

\begin{assump} \label{assumption_K}
We assume that $g(x) = C + Kx$, $x \in \R$, for some $C \in \R$ and $K > -1$.
\end{assump}


Our formulation considers the case the controller has the \emph{first move advantage} in that the stopper can stop only \emph{after} the controller modifies the process at any time $t \geq 0$.  As is studied in  \cite{Hernandez_Simon_Zervos_2012}, we can also consider a version where the stopper has the first move advantage, and in general these two formulations are not equivalent.   Under Assumption \ref{assumption_K}, however, these are equivalent in our case.  Indeed,  at $x$ when
the controller modifies the process by $\Delta_x > 0$ units,
the difference of payoffs between stopping immediately after and stopping immediately before is
\begin{align*}g(x + \Delta_x) - g(x) + \Delta_x = (K+1)\Delta_x,
\end{align*}
which is greater than zero by  Assumption \ref{assumption_K}.

For the rest of the paper, we assume that $X_1$ has a finite first moment, so that the problem is well
defined and nontrivial.
\begin{assump}  \label{assump_finiteness_mu}We assume that $\mu, \hat{\mu} \in (-\infty, \infty)$, where we define
\begin{align}
\mu := \E [X_1] \quad \textrm{and} \quad \hat{\mu} := - \E [X_1]. \label{drift}
\end{align}
\end{assump}

%
%



\section{Spectrally one-sided \lev processes and scale functions}  \label{section_preliminary} In this section, we review the spectrally negative \lev process and its scale function, which will be used to express fluctuation identities for Section \ref{section_spectrally_negative}.  The spectrally positive \lev process is its dual and hence the scale function is again used analogously for our discussion in Section \ref{section_spectrally_positive}.

Let $X = \left\{X_t; t \geq 0 \right\}$ be a
\emph{spectrally negative \lev process} whose \emph{Laplace exponent} is given by
\begin{align}
\psi(s)  := \log \E \left[ e^{s X_1} \right] =  c s +\frac{1}{2}\sigma^2 s^2 + \int_{(-\infty,0)} (e^{s z}-1 - s z 1_{\{-1 < z < 0\}}) \nu (\diff z), \quad s \geq 0, \label{laplace_spectrally_positive}
\end{align}
where $\nu$ is a \lev measure with the support $(-\infty, 0)$ that satisfies the integrability condition $\int_{(-\infty,0)} (1 \wedge z^2) \nu(\diff z) < \infty$.  It has paths of bounded variation if and only if $\sigma = 0$ and $\int_{( -1,0)}|z|\, \nu(\diff z) < \infty$; in this case, we write \eqref{laplace_spectrally_positive} as
\begin{align*}
\psi(s)   =  \delta s +\int_{(-\infty,0)} (e^{s z}-1 ) \nu (\diff z), \quad s \geq 0,
\end{align*}
with $\delta := c - \int_{( -1,0)}z\, \nu(\diff z)$.  We exclude the case in which $X$ is a subordinator (i.e., $X$ has monotone paths a.s.). This assumption implies that $\delta > 0$ when $X$ is of bounded variation.  The first moment of $X_1$ in \eqref{drift} is $\mu = \psi'(0^+)$, which is assumed to be finite by Assumption \ref{assump_finiteness_mu}.

\subsection{Scale functions}
 Fix $q > 0$, as  the discount factor in the definition of $J(x;\xi,\tau)$. For any spectrally negative \lev process, there exists a function called  the  \emph{q-scale function}
\begin{align*}
W^{(q)}: \R \rightarrow [0,\infty),
\end{align*}
which is zero on $(-\infty,0)$, continuous and strictly increasing on $[0,\infty)$, and is characterized by the Laplace transform:
\begin{align*}
\int_0^\infty e^{-s x} W^{(q)}(x) \diff x = \frac 1
{\psi(s)-q}, \qquad s > \lapinv,
\end{align*}
where
\begin{equation}
\lapinv :=\sup\{\lambda \geq 0: \psi(\lambda)=q\}. \notag
\end{equation}
Here, the Laplace exponent $\psi$ in \eqref{laplace_spectrally_positive} is known to be zero at the origin and convex on $[0,\infty)$; therefore $\lapinv$ is well defined and is strictly positive as $q > 0$.   We also define, for $x \in \R$,
\begin{align*}
\overline{W}^{(q)}(x) &:=  \int_0^x W^{(q)}(y) \diff y, \\
Z^{(q)}(x) &:= 1 + q \overline{W}^{(q)}(x),  \\
\overline{Z}^{(q)}(x) &:= \int_0^x Z^{(q)} (z) \diff z = x + q \int_0^x \int_0^z W^{(q)} (w) \diff w \diff z.
\end{align*}
In particular, because $W^{(q)}$ is zero on the negative half line, we have
\begin{align}
Z^{(q)}(x) = 1  \quad \textrm{and} \quad \overline{Z}^{(q)}(x) = x, \quad x \leq 0.  \label{z_below_zero}
\end{align}

Let us define the \emph{first down-} and \emph{up-crossing times}, respectively, of $X$ by
\begin{align}
\label{first_passage_time}
T_b^- := \inf \left\{ t \geq 0: X_t < b \right\} \quad \textrm{and} \quad T_b^+ := \inf \left\{ t \geq 0: X_t >  b \right\}, \quad b \in \R.
\end{align}
Then, for any $b > 0$ and $x \leq b$,
\begin{align}
\begin{split}
\E_x \left[ e^{-q T_b^+} 1_{\left\{ T_b^+ < T_0^- \right\}}\right] &= \frac {W^{(q)}(x)}  {W^{(q)}(b)}, \\
 \E_x \left[ e^{-q T_0^-} 1_{\left\{ T_b^+ > T_0^- \right\}}\right] &= Z^{(q)}(x) -  Z^{(q)}(b) \frac {W^{(q)}(x)}  {W^{(q)}(b)}, \\
 \E_x \left[ e^{-q T_0^-} \right] &= Z^{(q)}(x) -  \frac q {\Phi(q)} W^{(q)}(x).
\end{split}
 \label{laplace_in_terms_of_z}
\end{align}

Fix $\lambda \geq 0$ and define $\psi_\lambda(\cdot)$ as the Laplace exponent of $X$ under $\p^\lambda$ with the change of measure
\begin{align*}
\left. \frac {\diff \p^\lambda} {\diff \p}\right|_{\mathcal{F}_t} = \exp(\lambda X_t - \psi(\lambda) t), \quad t \geq 0;
\end{align*}
see page 213 of \cite{Kyprianou_2006}. The process $X$ remains to be a spectrally negative \lev process under this measure.
 Suppose $W_\lambda^{(q)}$ and $Z_\lambda^{(q)}$ are the scale functions associated with $X$ under $\p^\lambda$ (or equivalently with $\psi_\lambda(\cdot)$).
Then, by Lemma 8.4 of \cite{Kyprianou_2006}, $W_\lambda^{(q-\psi(\lambda))}(x) = e^{-\lambda x} W^{(q)}(x)$, $x \in \R$,
which is well defined even for $q \leq \psi(\lambda)$ as in Lemmas 8.3 and 8.5 of \cite{Kyprianou_2006}.  In particular, we define
\begin{align*}
W_{\Phi(q)}(x) := W_{\Phi(q)}^{(0)}(x) = e^{-\Phi(q) x} W^{(q)}(x), \quad x \in \R,
\end{align*}
which is increasing as it is the scale function of $X$ under
$\p^{\Phi(q)}$.

Important properties of the functions defined above are summarized next, in terms of the path variation of the process $X$.
\begin{remark} \label{remark_smoothness_zero}
\begin{enumerate}
\item If either $X$ is of unbounded variation or the \lev measure is atomless, it is known that $W^{(q)}$ is $C^1(\R \backslash \{0\})$; see, e.g.,\ \cite{Chan_2009}.  Hence,
\begin{enumerate}
\item $Z^{(q)}$ is $C^1 (\R \backslash \{0\})$ and $C^0 (\R)$ for the bounded variation case, while it is $C^2(\R \backslash \{0\})$ and $C^1 (\R)$ for the unbounded variation case, and
\item $\overline{Z}^{(q)}$ is $C^2(\R \backslash \{0\})$ and $C^1 (\R)$ for the bounded variation case, while it is $C^3(\R \backslash \{0\})$ and $C^2 (\R)$ for the unbounded variation case.
\end{enumerate}
\item Regarding the asymptotic behavior near zero, as in Lemmas 4.3 and 4.4 of \cite{Kyprianou_Surya_2007},
\begin{align}\label{eq:Wq0}
W^{(q)} (0) &= \left\{ \begin{array}{ll} 0, & \textrm{if $X$ is of unbounded
variation,} \\ \frac 1 {\delta}, & \textrm{if $X$ is of bounded variation,}
\end{array} \right. \\
\label{eq:Wqp0}
W^{(q)'} (0^+) &=
\left\{ \begin{array}{ll}  \frac 2 {\sigma^2}, & \textrm{if }\sigma > 0, \\
\infty, & \textrm{if }\sigma = 0 \; \textrm{and} \; \nu(-\infty,0) = \infty, \\
\frac {q + \nu(-\infty,0)} {\delta^2}, & \textrm{if }\sigma = 0 \; \textrm{and} \;  \nu(-\infty,0) < \infty.
\end{array} \right.
\end{align}
On the other hand,  as in Lemma 3.3 of \cite{Kuznetsov2013},  $W_{\Phi(q)} (x) \nearrow \psi'(\Phi(q))^{-1}$ as $x \rightarrow \infty$, and consequently
\begin{align}
\lim_{x \rightarrow \infty} W^{(q)}(x) = \lim_{x \rightarrow \infty} e^{\Phi(q)x}W_{\Phi(q)}(x) = \infty. \label{W_infty}
\end{align}
\item As in (8.18) and Lemma 8.2 of \cite{Kyprianou_2006},
\begin{align*}
\frac {W^{(q)'}(y^+)} {W^{(q)}(y)} \leq \frac {W^{(q)'}(x^+)} {W^{(q)}(x)},  \quad  y > x > 0.
\end{align*}
In addition,  $W^{(q)'}(x^-) \geq W^{(q)'}(x^+)$ for all $x \in \R$, and we have the limit ${W^{(q)'}(x^+)} / {W^{(q)}(x)} \xrightarrow{x \uparrow \infty} \Phi(q)$.
\end{enumerate}
\end{remark}

\section{Spectrally negative \lev case} \label{section_spectrally_negative}

In this section, we assume that $X$ is a spectrally negative \lev process as defined in the last section, and obtain a saddle point \eqref{Nash}.  In addition to Assumptions \ref{assumption_K} and \ref{assump_finiteness_mu} above, we require additional conditions on the running cost function $h$, which are imposed solely in this section.
We define
\begin{align}
\tilde{h}(x) := h(x) + qx, \quad x \in \R, \label{h_tilde_SN}
\end{align}
where $q$ is the discount factor we use in the performance criterion $J$.
\begin{assump} \label{assump_h_g}
We assume that $h$ (and $\tilde{h}$) are continuously differentiable with absolutely continuous first derivatives and
\begin{enumerate}
\item $\tilde{h}'(x) \leq 0$ for all $x \in \R$;
\item $\tilde{h}' \leq -c_1$ on $[a_1,\infty)$ for some $a_1 \in \R$ and $c_1 > 0$;
 \item $\tilde{h}' \leq -c_2$ on $(-\infty, a_2]$ for some $a_2 \in \R$ and $c_2 > 0$.
\end{enumerate}
\end{assump}
The monotonicity of $h$ is justified in various settings.  For example, in the inventory example discussed in the introduction, the controller and the stopper try to avoid shortage and excess, respectively, and hence the running reward function $h$ needs to be decreasing.  When robust control is considered, there are many applications where the higher value of a controlled process is more desirable. Examples include, in addition to the classical inventory models, 
the equity to debt ratio the company needs to control (so as to meet the capital adequacy requirements).  Similar monotone running reward function appears in insurance dividend problem with capital injection when a random independent exponential time horizon is introduced (see \cite{Albrecher_2012}).

Notice that Assumption \ref{assump_h_g}(1)  is slightly stronger than the condition that $h$ is decreasing, but the value of $q$ is typically a small number.  This tilted version \eqref{h_tilde_SN} of the running reward function
  is commonly used in singular/impulse control problems; see, e.g., \cite{Bensoussan_2009, Bensoussan_2005, Yamazaki_2013}.   Assumption \ref{assump_h_g}(2,3), which require that it is strictly decreasing in the tail, are assumed for technical reasons where we take limits in the arguments below.

\begin{figure}[htbp]
\begin{center}
\begin{minipage}{1.0\textwidth}
\centering
\begin{tabular}{cc}
 \includegraphics[scale=0.58]{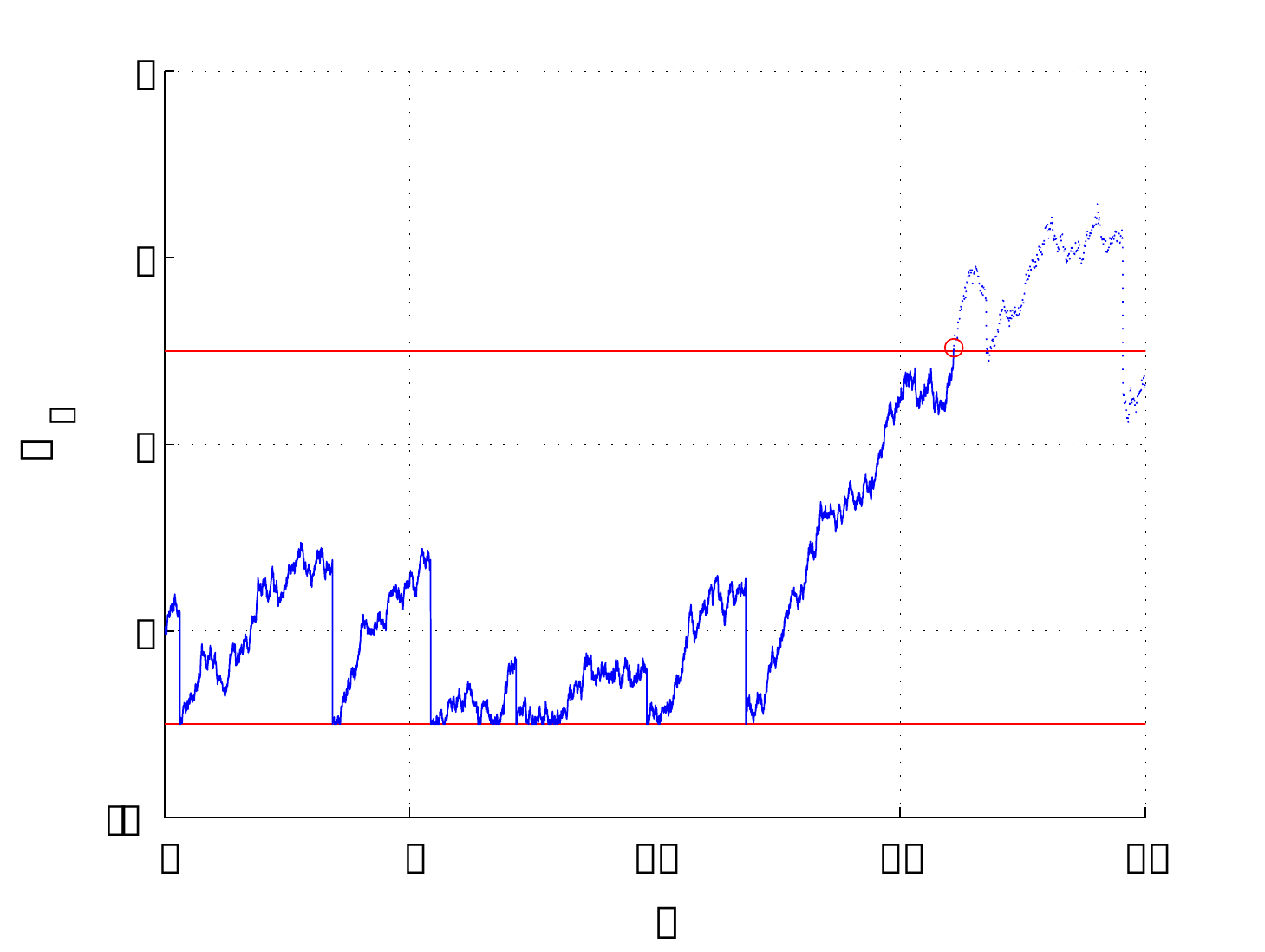} & \includegraphics[scale=0.58]{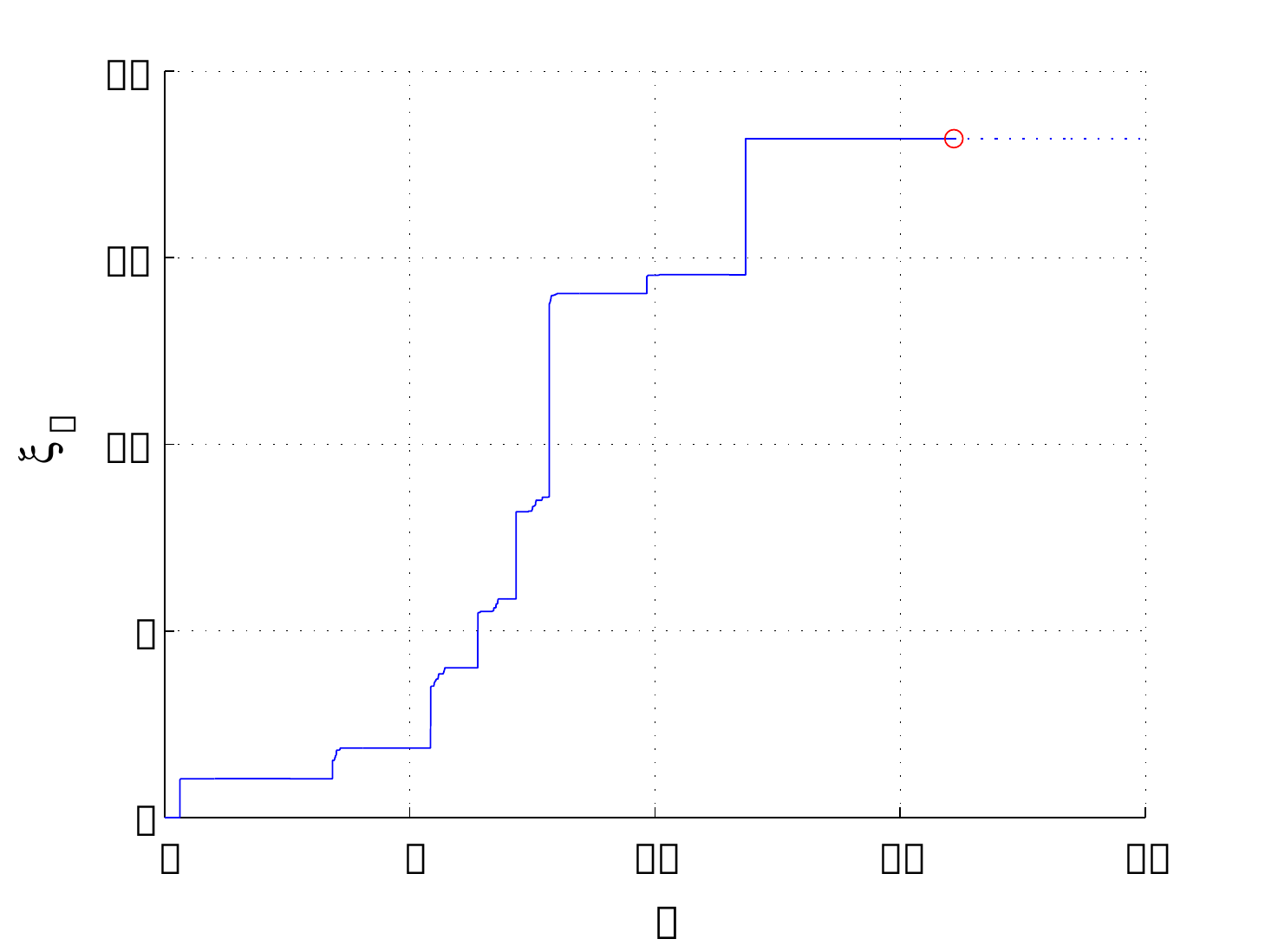}  \\
$U^{\xi^a}$ & $\xi^a$ \\
\end{tabular}
\end{minipage}
\caption{Sample realizations of $U^{\xi^a}$ and $\xi^a$ when $X$ is a spectrally negative \lev process. The circles indicate the points at the stopping time $\tau_{a,b}$.} \label{sample_path_neg}
\end{center}
\end{figure}

\subsection{Write $J_{a,b}$ using scale functions}  \label{subsection_J_scale_function_spectrally_negative}
Recall the set of strategies $(\xi^a, \tau_{a,b})$ defined as in \eqref{the_strategies}.  Our aim is to show that a saddle point is given in this form for appropriate choice of $a$ and $b$.

As an illustration, Figure \ref{sample_path_neg} shows a sample path of the controlled process $U^{\xi^a}$ and its corresponding control process $\xi^a$ when $X$ is a spectrally negative \lev process.  In the figure, the controlled process is reflected at the lower boundary $a=-1$ and is stopped at the first time it reaches the upper boundary $b=3$.  Due to the lack of positive jumps, the process necessarily creeps upward at $b$ (assuming it starts at a point lower than $b$).  The process $\xi^a$ increases whenever the process $U^{\xi^a}$ touches or goes below the level $a$.  Due to the negative jumps, it can increase both continuously and discontinuously.

Our first task is to write the performance criterion $J_{a,b}$  as in \eqref{J_a_b_def} using scale functions.  Because the controlled process $U^{\xi^a}$ is a reflected \lev process, we can use its fluctuation theories developed by Pistorius \cite{Pistorius_2004} and Avram et al.\ \cite{Avram_et_al_2007}.   Define, for any measurable function $f : \R \rightarrow \R$,
\begin{align} \label{def_psi_phi}
\varphi_a (x;f) &:= \int_{a}^x W^{(q)} (x-y) f(y) \diff y, \quad a,x \in \R.
\end{align}
Here $\varphi_a (x;f) = 0$ for any $x \leq a$ because $W^{(q)}$ is zero on $(-\infty,0)$.
We also define
\begin{align*}
l(x) := \overline{Z}^{(q)}(x) + \frac \mu q - \frac {Z^{(q)}(x)} {\Phi(q)}, \quad x \in \R.
\end{align*}

\begin{lemma}
For any $a \leq x \leq b$,
\begin{align} \label{J_a_b_initial}
\begin{split}
J_{a,b}(x)
&= \frac {Z^{(q)}(x-a)} {Z^{(q)}(b-a)} \left[ \varphi_a (b;h) + l(b-a) + g(b)\right] -  \varphi_a (x;h)  - l(x-a).
\end{split}
\end{align}
For $x > b$, we have $J_{a,b}(x) = g(x)$, while for $x<a$ the equality $J_{a,b}(x) = a-x+J_{a,b}(a)$ holds.
\end{lemma}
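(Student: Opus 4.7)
I would split $J_{a,b}$ into its three components (running, controlling, and terminal costs), evaluate each via scale-function fluctuation identities for the spectrally negative L\'evy process reflected at $a$, and collect terms. The two out-of-interval cases are immediate. For $x>b$, the definition $\xi^a_t=\sup_{0\le s\le t}(a-X_s)\vee 0$ forces $\xi^a_0=0$, whence $U^{\xi^a}_0=x>b$, $\tau_{a,b}=0$, and $J_{a,b}(x)=g(x)$. For $x<a$, the same definition forces an initial jump $\xi^a_0=a-x$ that brings $U^{\xi^a}_0$ up to $a$; by spatial homogeneity of $X$ the post-jump trajectory has the same law as $U^{\xi^a}$ under $\p_a$, and the controlling integral picks up an initial point-mass of size $a-x$, giving $J_{a,b}(x)=(a-x)+J_{a,b}(a)$.

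For the main case $a\le x\le b$ I would treat the three pieces independently. Because $X$ is spectrally negative, $U^{\xi^a}$ creeps upward at $b$, so $U^{\xi^a}_{\tau_{a,b}}=b$ on $\{\tau_{a,b}<\infty\}$; combined with the reflected first-passage identity $\E_x[e^{-q\tau_{a,b}}]=Z^{(q)}(x-a)/Z^{(q)}(b-a)$ (see \cite{Pistorius_2004,Avram_et_al_2007}), the terminal cost evaluates to $g(b)\,Z^{(q)}(x-a)/Z^{(q)}(b-a)$. The running cost follows by integrating $h$ against the $q$-resolvent density of $U^{\xi^a}$ killed at $\tau_{a,b}$, namely
\[
\frac{Z^{(q)}(x-a)}{Z^{(q)}(b-a)}\,W^{(q)}(b-y)-W^{(q)}(x-y), \qquad y\in[a,b],
\]
(again \cite{Pistorius_2004}); invoking the definition \eqref{def_psi_phi} of $\varphi_a$ then rewrites this as $\frac{Z^{(q)}(x-a)}{Z^{(q)}(b-a)}\varphi_a(b;h)-\varphi_a(x;h)$.

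For the controlling cost, define $v(y):=\E_y[\int_{[0,\infty)}e^{-qt}\diff\xi^a_t]$; by spatial homogeneity $v(y)=\tilde v(y-a)$, and a direct scale-function computation (or the corresponding capital-injection identity from \cite{Avram_et_al_2007}) gives $\tilde v(z)=-l(z)$, which is finite and vanishes at infinity under Assumption \ref{assump_finiteness_mu}. Since the barrier policy $\xi^a$ keeps reflecting at $a$ past $\tau_{a,b}$, the strong Markov property at $\tau_{a,b}$ yields
\[
v(x)=\E_x\!\left[\int_{[0,\tau_{a,b}]} e^{-qt}\diff\xi^a_t\right]+\E_x\!\left[e^{-q\tau_{a,b}}\right]v(b),
\]
so the controlling cost equals $-l(x-a)+l(b-a)\,Z^{(q)}(x-a)/Z^{(q)}(b-a)$. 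Summing the three contributions produces the stated formula.

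The principal obstacle is the identification $\tilde v(z)=-l(z)$ for the perpetual expected discounted regulator: the function $l$ is assembled from $\overline{Z}^{(q)}$, $Z^{(q)}/\Phi(q)$ and $\mu/q$ precisely so that this equality holds, and anchoring it requires either a Dynkin computation on a harmonic combination of scale functions satisfying the Neumann-type reflection condition at $a$, or a careful sign-check citation to the capital-injection literature. The other two pieces are routine once spectral negativity supplies the creeping property at $b$ and Pistorius's reflected resolvent density is in hand.
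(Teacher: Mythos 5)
Your proposal is correct and rests on the same fluctuation-theoretic ingredients as the paper: split $J_{a,b}$ into the running, controlling, and terminal pieces, evaluate the running cost via Pistorius's resolvent density for the reflected process, evaluate the terminal cost via upward creeping at $b$ together with the two-sided exit Laplace transform $\E_x[e^{-q\tau_{a,b}}]=Z^{(q)}(x-a)/Z^{(q)}(b-a)$, and add. The out-of-interval cases are handled identically.

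The one place you diverge from the paper is the controlling cost. The paper simply invokes the finite-horizon formula from the proof of Theorem~1 of Avram et al.\ (2007),
\[
\E_x\Big[\int_0^{\tau_{a,b}} e^{-qt}\,\diff\xi^a_t\Big]=-l(x-a)+Z^{(q)}(x-a)\,\frac{l(b-a)}{Z^{(q)}(b-a)},
\]
whereas you introduce the perpetual expected discounted regulator $v(y)=\E_y[\int_{[0,\infty)}e^{-qt}\diff\xi^a_t]$, identify $v(y)=-l(y-a)$, and recover the finite-horizon expression by a strong-Markov decomposition at $\tau_{a,b}$ (which is legitimate, noting that the regulator increments after $\tau_{a,b}$ depend only on $U^{\xi^a}_{\tau_{a,b}}=b$ and the post-$\tau_{a,b}$ increments of $X$, and that $\Delta\xi^a_{\tau_{a,b}}=0$ by upward creeping). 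This route is sound, and arguably conceptually cleaner, but it carries an extra proof obligation that the paper's citation avoids: you must independently establish $v(y)=-l(y-a)$ (equivalently $-\E_0[\underline X_{\eta(q)}]=1/\Phi(q)-\mu/q$ together with its shifted version) and that $l(z)\to 0$ as $z\to\infty$ so that $v$ is well-defined and the Markov decomposition is justified. You correctly flag this as the crux. Since both facts are again standard consequences of the same Avram--Pistorius circle of identities, the detour buys no generality; the paper's direct citation is the shorter path.
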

\begin{proof} We prove for $a \leq x \leq b$. The rest holds trivially.

First, by Theorem 1(i) of \cite{Pistorius_2004}, for any $a \leq x \leq b$,
\begin{align*}
\E_x \Big[ \int_0^{\tau_{a,b}} e^{-qt}  h (U_{t}^{\xi^a}) \diff t   \Big] &=  \int_a^\infty\Big[ Z^{(q)}(x-a)\frac { W^{(q)} (b-y)} {Z^{(q)}(b-a)} -  W^{(q)} (x-y) \Big]  h(y)  \diff y \\
&=  \frac {Z^{(q)}(x-a)} {Z^{(q)}(b-a)}\varphi_a (b;h) -  \varphi_a (x;h).
\end{align*}

Second, as in the proof of Theorem 1 of \cite{Avram_et_al_2007},
\begin{align*}
\E_x \Big[ \int_0^{\tau_{0,b}} e^{-qt} \diff \xi^0_{t} \Big] = - l(x) + Z^{(q)}(x) \frac {l(b)} {Z^{(q)}(b)}, \quad 0 \leq x \leq b.
\end{align*}
By shifting the initial position of $X$, \begin{align*}
\E_x \Big[ \int_0^{\tau_{a,b}} e^{-qt} \diff \xi^a_{t} \Big] = - l(x-a) + Z^{(q)}(x-a) \frac {l(b-a)} {Z^{(q)}(b-a)}, \quad a \leq x \leq b.
\end{align*}

Finally, by page 228 of \cite{Kyprianou_2006},
\begin{align*}
\E_x [ e^{- q \tau_{a,b}} ] = \frac {Z^{(q)}(x-a)} {Z^{(q)}(b-a)}, \quad a \leq x \leq b.
\end{align*}

Summing up these,  and because $U^{\xi^a}_{\tau_{a,b}} = b$, $\p_x$-a.s.\ (due to no positive jumps) for $x \leq b$, the proof is complete.
\end{proof}

\subsection{Smooth fit}  Now we shall choose the values of $a$ and $b$ so that the function $J_{a,b}$ is smooth. We will see that such desired pair satisfies some conditions that are characterized efficiently by the two functions:
\begin{align}\label{def_big_lambda}
\begin{split}
\Lambda(a,b) &:= {\varphi_a (b;h) + l(b-a) + g(b)}  -  \frac {h(a)} q {Z^{(q)}(b-a)} +  \frac 1 {\Phi(q)} {Z^{(q)}(b-a)} \\
&= {\varphi_a (b;h) + \overline{Z}^{(q)}(b-a) + \frac \mu q  + g(b)} -  \frac {h(a)} q {Z^{(q)}(b-a)}, \quad b \geq a,
\end{split}
\end{align}
and its derivative with respect to the second argument,
\begin{align}
 \lambda(a,b) := \frac \partial {\partial b}\Lambda(a,b) = \varphi_a (b;h')  +  Z^{(q)}(b-a) + K, \quad b > a. \label{def_small_lambda}
\end{align}
Here we used that
\begin{align}
\varphi_a' (x;h)  &=  W^{(q)}(x-a) h(a) + \varphi_a (x;h') \label{varphi_derivative}
\end{align}
which holds because $\varphi_a (x;h)  =  \overline{W}^{(q)}(x-a) h(a) + \int_a^{x} \overline{W}^{(q)}(x-y) h'(y) \diff y$ by integration by parts.

\begin{proposition} \label{proposition_smoothness}Suppose  $a < b$ are such that $\Lambda(a,b) = \lambda(a,b) = 0$.  Then
\begin{enumerate}
\item $J_{a,b}$ is differentiable (resp.\ twice-differentiable) at $a$ when $X$ is of bounded (resp.\ unbounded) variation;
\item $J_{a,b}$ is differentiable at $b$ for all cases.
\end{enumerate}
\end{proposition}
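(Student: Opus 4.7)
The plan is to verify the smoothness claims by directly computing the one-sided derivatives of $J_{a,b}$ from the piecewise representation \eqref{J_a_b_initial}, and to observe that the two hypotheses $\Lambda(a,b)=0$ and $\lambda(a,b)=0$ are precisely the algebraic identities that make the mismatch terms cancel. Starting from \eqref{J_a_b_initial} and using $(Z^{(q)})'(y) = qW^{(q)}(y)$, $l'(y) = Z^{(q)}(y) - qW^{(q)}(y)/\Phi(q)$, and \eqref{varphi_derivative}, I would derive, for $a < x < b$,
\[
J_{a,b}'(x) = W^{(q)}(x-a)\left[\frac{q\bigl(\varphi_a(b;h)+l(b-a)+g(b)\bigr)}{Z^{(q)}(b-a)} - h(a) + \frac{q}{\Phi(q)}\right] - \varphi_a(x;h') - Z^{(q)}(x-a).
\]
The decisive observation is that the bracketed quantity, which is independent of $x$, equals $q\Lambda(a,b)/Z^{(q)}(b-a)$ by \eqref{def_big_lambda}, and therefore is identically zero under the hypothesis $\Lambda(a,b)=0$.

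For the smooth fit at $a$, note that $J_{a,b}(x) = a - x + J_{a,b}(a)$ on $\{x<a\}$, so trivially $J_{a,b}'(a^-) = -1$ and $J_{a,b}''(a^-)=0$. With the bracket vanishing, the displayed formula at $x=a^+$ immediately gives $J_{a,b}'(a^+)=-1$, establishing the $C^1$-fit in both path-variation regimes. For the second-order fit in the unbounded-variation case, I would differentiate once more: since the bracket is an $x$-independent constant equal to zero, the term $W^{(q)'}(x-a)\cdot[\text{bracket}]$ is literally zero for all $x>a$, so the subcase $\sigma=0$ with $W^{(q)'}(0^+)=\infty$ produces no indeterminate form. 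After a further application of \eqref{varphi_derivative} to $\varphi_a(\cdot;h')$, evaluation at $a^+$ yields $J_{a,b}''(a^+) = -W^{(q)}(0)\,\tilde{h}'(a)$, which by \eqref{eq:Wq0} equals $0 = J_{a,b}''(a^-)$ exactly when $X$ is of unbounded variation.

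The smooth fit at $b$ follows similarly. Since $J_{a,b}=g$ on $\{x>b\}$, Assumption \ref{assumption_K} gives $J_{a,b}'(b^+) = g'(b) = K$. Evaluating the displayed formula at $x=b^-$ and using that the bracket is zero, one is left with $J_{a,b}'(b^-) = -\varphi_a(b;h') - Z^{(q)}(b-a)$, which by \eqref{def_small_lambda} equals $K - \lambda(a,b)$; this matches $K$ precisely under the second hypothesis $\lambda(a,b)=0$.

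The main subtlety I anticipate is justifying the repeated differentiation of $\varphi_a(x;h)$ and the second-derivative analogue $\varphi_a'(x;h') = W^{(q)}(x-a)h'(a) + \varphi_a(x;h'')$, both of which rest on the absolute continuity of $h'$ granted by Assumption \ref{assump_h_g}. A secondary delicate point, already noted, is the $\sigma=0$ unbounded-variation regime where $W^{(q)'}(0^+)=\infty$; what rescues the argument there is that the coefficient multiplying $W^{(q)'}$ is a constant that is already identically zero under $\Lambda(a,b)=0$, rather than merely zero in the limit.
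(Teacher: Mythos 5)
Your argument is correct and follows essentially the same route as the paper: compute $J_{a,b}'$ and $J_{a,b}''$ from \eqref{J_a_b_initial}, regroup so that the only $b$-dependent coefficient is the $x$-independent quantity $q\Lambda(a,b)/Z^{(q)}(b-a)$, evaluate one-sided limits at $a$ and $b$ using \eqref{eq:Wq0}--\eqref{eq:Wqp0}, and read off the smooth-fit conditions as exactly $\Lambda(a,b)=0$ and $\lambda(a,b)=0$. Your explicit remark that the offending $W^{(q)'}(0^+)=\infty$ term carries a coefficient that is \emph{identically} zero (not merely zero in a limit) is the same device the paper uses implicitly; both proofs are in substance identical.
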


We shall prove this proposition by a straightforward differentiation and the asymptotic property of the scale function near zero as in Remark \ref{remark_smoothness_zero}(2).

First, taking derivatives in \eqref{J_a_b_initial}, for $a \leq x \leq b$,
\begin{align} \label{eq_derivatives}
\begin{split}
J_{a,b}'(x)
&= q \frac {W^{(q)}(x-a)} {Z^{(q)}(b-a)} \left[ \varphi_a (b;h) + l(b-a) + g(b)\right] -  \varphi_a' (x;h)  - l'(x-a), \\
J_{a,b}''(x)
&= q \frac {W^{(q)'}(x-a)} {Z^{(q)}(b-a)} \left[ \varphi_a (b;h) + l(b-a) + g(b)\right] -  \varphi_a'' (x;h)  - l''(x-a),
\end{split}
\end{align}
where we assume $X$ is of unbounded variation for the latter.  Here $\varphi_a' (x;h)$ and $\varphi_a'' (x;h)$ can be written, respectively, by \eqref{varphi_derivative} and
\begin{align*}
\varphi_a'' (x;h)  &=  W^{(q)'}(x-a) h(a) + W^{(q)}(x-a) h'(a) + \varphi_a (x;h'').
\end{align*}
On the other hand,
 \begin{align*}
l'(x-a) &= Z^{(q)}(x-a) - \frac{q W^{(q)}(x-a)} {\Phi(q)}, \\
l''(x-a) &= q \left[W^{(q)}(x-a) - \frac {W^{(q)'}(x-a)} {\Phi(q)} \right].
\end{align*}

Using these, \eqref{eq_derivatives} is written, for $a \leq x \leq b$,
\begin{align} \label{J_first_derivative}
\begin{split}
J_{a,b}'(x)
&= q \frac {W^{(q)}(x-a)} {Z^{(q)}(b-a)} \left[ \varphi_a (b;h) + l(b-a) + g(b)\right] \\ &-  W^{(q)}(x-a) h(a) - \varphi_a (x;h')  -  Z^{(q)}(x-a) +  \frac {q W^{(q)}(x-a)} {\Phi(q)} \\
&= q  {W^{(q)}(x-a)} \frac {\Lambda(a,b)} {Z^{(q)}(b-a)} - \varphi_a (x;h')  -  Z^{(q)}(x-a),
\end{split}
\end{align}
and, in particular when $X$ is of unbounded variation,
\begin{align}
J_{a,b}''(x)
&= q  {W^{(q)'}(x-a)}\frac {\Lambda(a,b)} {Z^{(q)}(b-a)}  - W^{(q)}(x-a) (h'(a)+q) - \varphi_a (x;h''). \label{J_second_derivative}
\end{align}

Sending $x \downarrow a$ in  \eqref{J_first_derivative},
\begin{align*}
J_{a,b}'(a^+)
&=  q{W^{(q)}(0)} \frac {\Lambda(a,b)} {Z^{(q)}(b-a)} - 1.
\end{align*}
In addition, for the unbounded variation case,  by \eqref{J_second_derivative},
\begin{align*}
J_{a,b}''(a^+)
&= q{W^{(q)'}(0^+)} \frac {\Lambda(a,b)} {Z^{(q)}(b-a)}.
\end{align*}
By Remark \ref{remark_smoothness_zero}(2), if $(a,b)$ are such that $\Lambda(a,b) = 0$, then $J_{a,b}$ is differentiable (resp.\ twice-differentiable) at $a$ when $X$ is of bounded (resp.\ unbounded) variation.

Now consider the smoothness at $b$.  By sending $x \uparrow b$ in \eqref{J_a_b_initial},
\begin{align*}
J_{a,b}(b^-)
&= \varphi_a (b;h) + l(b-a) + g(b) -  \varphi_a (b;h)  - l(b-a) = g(b) = J_{a,b} (b^+),
\end{align*}
and hence it is clearly continuous for any choice of $b(> a)$.  On the other hand, by taking $x \uparrow b$ in  \eqref{J_first_derivative} and subtracting $g'(b) = K$ on both sides,
\begin{align*}
J_{a,b}'(b^-) - g'(b)
&=  q W^{(q)}(b-a) \frac {\Lambda(a,b)} {Z^{(q)}(b-a)} - \lambda(a,b).
\end{align*}
We therefore see that if we can find $a < b$ such that $\Lambda(a,b) =  \lambda(a,b) = 0$, we can attain a smooth function that satisfies (1) and (2) of Proposition \ref{proposition_smoothness}.


\subsection{Existence of $(a^*,b^*)$}  We show the existence of $a^* < b^*$ such that $\Lambda(a^*,b^*) = \lambda(a^*,b^*) = 0$, or equivalently $(a^*,b^*)$ such that the function $ \Lambda(a^*,\cdot): [a^*,\infty) \rightarrow \R$ touches and also is tangent to the $x$-axis at $b^*$.

By \eqref{def_big_lambda}, the function $\Lambda(a,\cdot)$ starts at
\begin{align}
\Lambda(a,a)
&=  \frac {\mu} q   - \frac {h(a)} q + g(a) = \frac {\mu} q   - \frac {\tilde{h}(a)} q + C + (1+K) a. \label{Lambda_a_a}
\end{align}
Because this is monotonically increasing in $a$ from $-\infty$ to $\infty$ (by Assumptions \ref{assumption_K} and \ref{assump_h_g}),
 we can define $\bar{a}$ to be the zero of $\Lambda(a,a)$.
Then, $\Lambda(a,a) < 0$ if and only if $a < \bar{a}$.

\begin{proposition} \label{existence_a_b}
There exist $a^* < b^*$ such that $a^* < \bar{a}$ and
\begin{enumerate}
\item $\Lambda(a^*,b^*) = \lambda(a^*,b^*) = 0$,
\item  $\lambda(a^*,x) \geq 0$ for $x \in (a^*, b^*)$ and $\lambda(a^*,x) \leq 0$ for $x \in (b^*,\infty)$,
\item  $\Lambda(a^*, x) \leq 0$ for all $x \geq a^*$.
\end{enumerate}
\end{proposition}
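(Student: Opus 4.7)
My plan is to use the condition $\lambda(a,b)=0$ to parameterize $b$ as a function of $a$, and then apply the intermediate value theorem (IVT) to the scalar function $F(a):=\Lambda(a,b(a))$ to locate $a^*$. Rewriting with the identity $Z^{(q)}(b-a)=1+q\int_a^b W^{(q)}(b-y)\,\diff y$ (obtained from the substitution $u=b-y$) yields
\begin{equation*}
\lambda(a,b)=1+K+\int_a^b W^{(q)}(b-y)\,\tilde{h}'(y)\,\diff y,\qquad b>a.
\end{equation*}
Since $W^{(q)}\geq 0$ is nondecreasing on $[0,\infty)$ and $\tilde{h}'\leq 0$, the map $b\mapsto\lambda(a,b)$ is non-increasing. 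It equals $1+K>0$ at $b=a^+$ by Assumption \ref{assumption_K}, and by Assumption \ref{assump_h_g}(2) combined with $\overline{W}^{(q)}(x)\to\infty$ (a consequence of \eqref{W_infty}), it diverges to $-\infty$ as $b\to\infty$. Hence $b(a):=\inf\{b>a:\lambda(a,b)\leq 0\}$ is well-defined in $(a,\infty)$, satisfies $\lambda(a,b(a))=0$, and realises the sign dichotomy $\lambda(a,\cdot)\geq 0$ on $(a,b(a))$ and $\lambda(a,\cdot)\leq 0$ on $[b(a),\infty)$.

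The crucial technical step is a uniform bound on $b(a)-a$ as $a\to-\infty$, where Assumption \ref{assump_h_g}(3) becomes essential. Choose $\beta_0>0$ with $c_2\overline{W}^{(q)}(\beta_0)>1+K$ (possible since $\overline{W}^{(q)}\nearrow\infty$). For $a\leq a_2-\beta_0$, $\tilde{h}'(y)\leq -c_2$ throughout $[a,a+\beta_0]$, so
\begin{equation*}
\lambda(a,a+\beta_0)\leq 1+K-c_2\overline{W}^{(q)}(\beta_0)<0,
\end{equation*}
forcing $b(a)\leq a+\beta_0$ on this tail.

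With this bound I would analyse $F(a)=\Lambda(a,a)+\int_a^{b(a)}\lambda(a,b)\,\diff b$. Joint continuity of $\Lambda$ and $\lambda$, boundedness of $\{b(a_n)\}$ for $a_n\to a$, and the fact that $\Lambda(a,\cdot)$ is constant on the (possibly degenerate) zero interval of $\lambda(a,\cdot)$ together yield continuity of $F$. At $a=\bar{a}$, $\Lambda(\bar{a},\bar{a})=0$ by definition of $\bar{a}$ and $\lambda(\bar{a},\cdot)>0$ on the nonempty interval $(\bar{a},b(\bar{a}))$, so $F(\bar{a})>0$. As $a\to-\infty$, one has $\int_a^{b(a)}\lambda(a,b)\,\diff b\leq(1+K)\beta_0$ (using $\lambda(a,\cdot)\leq\lambda(a,a)=1+K$), while by \eqref{Lambda_a_a}, $\Lambda(a,a)=\mu/q+C+(1+K)a-\tilde{h}(a)/q\to-\infty$ (since $K>-1$ and $\tilde{h}(a)\to\infty$ by integrating $\tilde{h}'\leq -c_2$ on $(-\infty,a_2]$). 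Hence $F(a)\to-\infty$, and the IVT produces $a^*<\bar{a}$ with $F(a^*)=0$. Setting $b^*:=b(a^*)$, part (1) is immediate, part (2) is the defining sign property of $b(a^*)$, and part (3) follows because $\Lambda(a^*,\cdot)$ is nondecreasing on $[a^*,b^*]$ and nonincreasing on $[b^*,\infty)$ with maximum value $0$. The main obstacle is precisely the tail bound on $b(a)-a$: without Assumption \ref{assump_h_g}(3), $b(a)-a$ could diverge fast enough as $a\to-\infty$ that the positive integral overwhelms the drop in $\Lambda(a,a)$, causing the IVT argument to fail.
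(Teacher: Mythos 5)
Your proposal is correct and follows essentially the same route as the paper: parameterize $b$ as the zero $b(a)$ of $\lambda(a,\cdot)$ (the paper's $\tilde b(a)$), use Assumption \ref{assump_h_g}(3) to get a uniform bound on $b(a)-a$ for $a$ small, and apply the intermediate value theorem to $a\mapsto\Lambda(a,b(a))$. The only cosmetic difference is how the negative endpoint of the IVT bracket is produced; you compute $\Lambda(a,a)\to-\infty$ directly and bound $\int_a^{b(a)}\lambda$ by $(1+K)\beta_0$, whereas the paper relies on the monotonicity $\partial_a\Lambda\geq 0$ together with the existence of $\underline{a}$ with $\tilde b(\underline{a})=\bar a$, but both reduce to the same use of Assumption \ref{assump_h_g}(3) and reach the same bracket.
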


We shall prove Proposition \ref{existence_a_b} for the rest of this subsection.
We first rewrite $\Lambda(a,b)$ and $\lambda(a,b)$ in terms of $\tilde{h}$ as in  \eqref{h_tilde_SN}. Integration by parts gives
\begin{align*}
\varphi_{a} (b;h)
&= \varphi_{a} (b;\tilde{h}) - \left[  a Z^{(q)}(b-a) + \overline{Z}^{(q)}(b-a) -b\right], \quad b \in \R.
\end{align*}
Using this and \eqref{varphi_derivative},
\begin{align} \label{lambda_rewrite_capital}
\begin{split}
\Lambda(a,b)
&= \varphi_a (b;\tilde{h})  -   a Z^{(q)}(b-a) +b + \frac \mu q   - \frac {h(a)} q Z^{(q)}(b-a)  + g(b) \\
&=  \int_{a}^b W^{(q)} (b-y) (\tilde{h}(y)- \tilde{h}(a)) \diff y    +b + \frac {\mu} q   - \frac {\tilde{h}(a)} q + g(b).
\end{split}
\end{align}
We can also write \eqref{def_small_lambda} as
\begin{align}
\lambda(a,b) = \int_{a}^b W^{(q)} (b-y) h'(y)\diff y  +  Z^{(q)}(b-a) + K
= \int_{a}^b W^{(q)} (b-y) \tilde{h}'(y)\diff y + K + 1. \label{lambda_rewrite}
\end{align}
Differentiating this further,
\begin{align}
\frac \partial {\partial b} \lambda(a,b)
&= \int_{a}^b W^{(q)'} (b-y) \tilde{h}'(y)\diff y  + \tilde{h}'(b) W^{(q)}(0) \leq 0, \label{lambda_derivative_b}
\end{align}
where the last inequality holds by Assumption \ref{assump_h_g} and because the scale function $W^{(q)}$ is increasing and  nonnegative.

We show the following lemma to show the existence of a local maximizer of $\Lambda(a, \cdot)$ over $(a,\infty)$.
\begin{lemma} \label{lemma_b_infty}For any fixed $a \in \R$, $\lambda(a,b) \xrightarrow{b \uparrow \infty} -\infty$ and $\Lambda(a,b) \xrightarrow{b \uparrow \infty} -\infty$.
\end{lemma}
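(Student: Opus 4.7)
The plan is to prove the two divergences in sequence, deducing the $\Lambda$-limit from the $\lambda$-limit via the relationship $\lambda = \partial_b \Lambda$. I start with $\lambda(a,b)$, for which the representation \eqref{lambda_rewrite}, namely $\lambda(a,b) = \int_{a}^{b} W^{(q)}(b-y)\tilde{h}'(y)\diff y + K + 1$, is the main tool; the mechanism is that $\tilde{h}'$ is bounded above by a strictly negative constant near infinity (Assumption \ref{assump_h_g}(2)) while the scale function grows without bound.

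Concretely, I would set $\hat{a} := a \vee a_1$ and split the integration domain at $\hat{a}$. On $[\hat{a}, b]$ the bound $\tilde{h}'(y) \leq -c_1$ applies, while on $[a,\hat{a}]$ the weaker sign bound $\tilde{h}'(y) \leq 0$ from Assumption \ref{assump_h_g}(1) suffices; combining the two contributions yields
$$\lambda(a,b) \;\leq\; -c_1 \int_{\hat{a}}^{b} W^{(q)}(b-y)\diff y + K + 1 \;=\; -c_1\,\overline{W}^{(q)}(b-\hat{a}) + K + 1.$$
Since \eqref{W_infty} gives $W^{(q)}(x)\to\infty$ as $x\to\infty$, its antiderivative $\overline{W}^{(q)}(b-\hat{a})$ also diverges to $+\infty$, delivering $\lambda(a,b)\to -\infty$.

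For the second limit, I would integrate the identity $\lambda(a,\cdot) = \partial_b \Lambda(a,\cdot)$ from $a$ to $b$ to write $\Lambda(a,b) = \Lambda(a,a) + \int_{a}^{b} \lambda(a,y)\diff y$, noting that $\Lambda(a,a)$ is explicitly finite by \eqref{Lambda_a_a} and that $\lambda(a,\cdot)$ is continuous on $[a,\infty)$ so the integral is well-defined. Given $\lambda(a,y)\to -\infty$, a standard split at any $B$ with $\lambda(a,y)\leq -M$ for $y \geq B$ produces $\int_a^b \lambda(a,y)\diff y \leq \int_a^B \lambda(a,y)\diff y - M(b-B)$, and letting $b\to\infty$ for arbitrary $M>0$ forces the right-hand side, and therefore $\Lambda(a,b)$, to $-\infty$.

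The argument contains no serious obstacle beyond locating the correct $\tilde{h}$-representations (already supplied by \eqref{lambda_rewrite_capital} and \eqref{lambda_rewrite}) and converting the quantitative decay of $\tilde{h}'$ from Assumption \ref{assump_h_g}(2) into a lower bound proportional to $\overline{W}^{(q)}$. The case $a \geq a_1$ requires no splitting at all and is even simpler. Once these are in place, the divergence of the scale function at infinity does all the work, and both claims follow.
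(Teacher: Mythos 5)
Your argument is correct and follows essentially the same strategy as the paper: split the integral at $a_1\vee a$, use the sign bound on one side and the quantitative bound $\tilde{h}'\le -c_1$ on the other, and let the growth of the scale function force the divergence, then deduce the $\Lambda$-limit from $\lambda=\partial_b\Lambda$. The one place you genuinely diverge is in how you extract the divergence of the remaining integral: the paper factors out $e^{\Phi(q)b}$, passes to the tilted scale function $W_{\Phi(q)}$, and uses its monotonicity together with a truncation at $b-\epsilon$, whereas you simply observe that $\int_{\hat a}^{b}W^{(q)}(b-y)\,\diff y=\overline{W}^{(q)}(b-\hat a)\to\infty$ directly from \eqref{W_infty}. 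Your route is more elementary and avoids the change-of-measure machinery entirely; the paper's route gives a sharper exponential rate $e^{\Phi(q)b}$, which is not needed here but could matter if one wanted quantitative decay estimates. Both are valid proofs of the lemma.
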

\begin{proof}
Recall Assumption \ref{assump_h_g}.  For $\epsilon > 0$ and $b > a_1 \vee a + \epsilon$,
\begin{multline*}
\int_{a_1 \vee a}^b W^{(q)} (b-y) \tilde{h}'(y)\diff y = e^{\Phi(q) b}\int_{a_1 \vee a}^b e^{-\Phi(q)y}W_{\Phi(q)} (b-y) \tilde{h}'(y) \diff y \\ \leq - c_1e^{\Phi(q) b}\int_{a_1 \vee a}^b e^{-\Phi(q)y}W_{\Phi(q)} (b-y) \diff y
\leq - c_1e^{\Phi(q) b}\int_{a_1 \vee a}^{b-\epsilon} e^{-\Phi(q)y}W_{\Phi(q)} (\epsilon) \diff y \xrightarrow{b \uparrow \infty} - \infty,
\end{multline*}
where the last inequality holds because $W_{\Phi(q)}$ is increasing.
In view of  \eqref{lambda_rewrite}, this together with  $\int_a^{a_1 \vee a} W^{(q)} (b-y) \tilde{h}'(y)\diff y \leq 0$ shows the first claim. The second claim holds immediately because $\lambda(a,b)$ is the derivative of $\Lambda(a,b)$ with respect to $b$ as in \eqref{def_small_lambda}.
\end{proof}

Note that \eqref{def_small_lambda} and Assumption \ref{assumption_K} imply
\begin{align}
\lambda(a,a^+) = 1+ K  > 0. \label{lambda_small_starting}
\end{align}
By \eqref{lambda_derivative_b}, \eqref{lambda_small_starting} and Lemma \ref{lemma_b_infty}, for any choice of $a$, $\Lambda(a,\cdot)$ is a concave function that first increases and then decreases to $-\infty$ in the limit. This shows that there exists a unique global/local maximizer:
\begin{align}
\tilde{b}(a) := \arg \sup_{b' > a}\Lambda(a,b'). \label{def_b_of_a_SN}
\end{align}
Moreover, because differentiation of \eqref{lambda_rewrite} with respect to the first argument and Assumption  \ref{assump_h_g} yield
\begin{align}
\frac \partial {\partial a} \lambda(a,b)
&= -W^{(q)} (b-a) \tilde{h}'(a) \geq 0, \label{lambda_derivative_a}
\end{align}
we see that $\tilde{b}(a)$ is increasing in $a$.
We shall show that there exists $a$ such that $\Lambda(a,\tilde{b}(a)) = 0$.

We first start at $a=\bar{a}$ that makes \eqref{Lambda_a_a} vanish.  By \eqref{lambda_small_starting} and \eqref{def_b_of_a_SN}, $\Lambda(\bar{a},\tilde{b}(\bar{a})) > 0$.  We now gradually decrease its value to attain $a$ such that $\Lambda(a,\tilde{b}(a)) = 0$.  Differentiating \eqref{lambda_rewrite_capital} and Assumption \ref{assump_h_g}(1) give
\begin{align}
\frac \partial {\partial a}\Lambda(a,b)
&=  -\int_{a}^b W^{(q)} (b-y) \tilde{h}'(a) \diff y  - \frac {\tilde{h}'(a)} q \geq 0. \label{lambda_capital_increasing}
\end{align}
This suggests that $\Lambda(a,\tilde{b}(a))$ decreases monotonically as $a \downarrow -\infty$.  Indeed, for any $x < y < \tilde{b}(x)$ (here we know $\tilde{b}(x) > x$ by \eqref{lambda_small_starting}), we have $\Lambda(x, \tilde{b}(x)) \leq \Lambda(y, \tilde{b}(x)) \leq  \Lambda(y, \tilde{b}(y))$ by \eqref{lambda_capital_increasing} and the definition of $\tilde{b}$ as in \eqref{def_b_of_a_SN}.  Because $\Lambda(a, \tilde{b}(a))$ is continuous, it is now sufficient to show that for $a$ sufficiently small, $\Lambda(a, \tilde{b}(a))$ becomes negative.

%

Because $\tilde{b}(a)$ decreases as $a$ decreases as discussed above, there exists a limit $\tilde{b}(-\infty) := \lim_{a \downarrow -\infty}\tilde{b}(a)$.  We shall first show that this is $-\infty$.  By \eqref{lambda_rewrite} and Assumption \ref{assump_h_g}(3), for any $a < b \leq a_2$,
\begin{align}
\lambda(a,b) =  \int_{a}^b W^{(q)} (b-y) \tilde{h}'(y)\diff y + K + 1 \leq - c_2 \overline{W}^{(q)} (b-a) + K + 1.  \label{lambda_bound}
\end{align}
Hence if we choose $\epsilon := (\overline{W}^{(q)})^{-1} ((K+1)/c_2) \in (0, \infty)$,
then the right-hand side of \eqref{lambda_bound} vanishes for $b=a+\epsilon$.  Hence $\lambda(a, a+\epsilon)  \leq 0$ if $a+\epsilon  \leq a_2$. This and \eqref{lambda_derivative_b} show that $a \leq \tilde{b}(a) \leq a + \epsilon$ for any $a \leq a_2 - \epsilon$.  Therefore $\tilde{b}(-\infty)  = -\infty$.

Now due to the continuity of $\tilde{b}(a)$ (thanks to the continuity of $\lambda$), there exists some finite $\underline{a}$ such that $\tilde{b}(\underline{a}) = \bar{a}$.  By \eqref{lambda_capital_increasing},  $0 = \Lambda(\bar{a}, \bar{a})  =  \Lambda(\tilde{b}(\underline{a}), \tilde{b}(\underline{a}))  \geq  \Lambda(\underline{a}, \tilde{b}(\underline{a}))$, which shows the existence of $a^*$ such that $\Lambda(a^*, \tilde{b}(a^*)) = 0$, as desired. This completes the proof of Proposition  \ref{existence_a_b}.

\subsection{Value function}
Using $(a^*,b^*)$ as obtained in Proposition  \ref{existence_a_b}, equation \eqref{def_big_lambda} gives
\begin{align*}
{\varphi_{a^*} (b^*;h) + l(b^*-a^*) + g(b^*)}  = {Z^{(q)}(b^*-a^*)} \left( \frac {h(a^*)} q -  \frac 1 {\Phi(q)} \right).
\end{align*}
Substituting this in \eqref{J_a_b_initial},
\begin{align} \label{def_value_function}
\begin{split}
J_{a^*,b^*}(x)
&=  {Z^{(q)}(x-a^*)} \left( \frac {h(a^*)} q -  \frac 1 {\Phi(q)} \right) -  \varphi_{a^*} (x;h)  - l(x-a^*) \\
&=  {Z^{(q)}(x-a^*)} \left( \frac {h(a^*)} q -  \frac 1 {\Phi(q)} \right) -  \overline{Z}^{(q)}(x-a^*) - \frac \mu q + \frac {Z^{(q)}(x-a^*)} {\Phi(q)}  -  \varphi_{a^*} (x;h) \\
&=  {Z^{(q)}(x-a^*)} \frac {h(a^*)} q -  \overline{Z}^{(q)}(x-a^*) - \frac \mu q  -  \varphi_{a^*} (x;h),
\end{split}
\end{align}
which holds for any $x \leq b^*$. For $x > b^*$, we have $J_{a^*,b^*}(x)=g(x)$.  By \eqref{def_big_lambda}, we can also write
\begin{align}
J_{a^*,b^*}(x)  = g(x) - \Lambda(a^*,x), \quad a^* \leq x \leq b^*. \label{def_value_function_simple}
\end{align}

\begin{proposition} \label{convexity_SN}
The function $J_{a^*,b^*}$ is convex such that $-1 \leq J_{a^*,b^*}'(x) \leq K = g'(x)$ and $J_{a^*,b^*}(x) \geq g(x)$ for $x \in \R$.
\end{proposition}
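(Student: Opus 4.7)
The plan is to analyze $J_{a^*,b^*}$ piecewise on $(-\infty,a^*)$, $[a^*,b^*]$ and $(b^*,\infty)$, establishing first the derivative bounds, then convexity, and finally the pointwise inequality $J_{a^*,b^*}\geq g$.

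For the derivative bounds, on $(-\infty,a^*)$ the function equals $a^*-x+J_{a^*,b^*}(a^*)$, so $J_{a^*,b^*}'\equiv-1$, and on $(b^*,\infty)$ it equals $g(x)=C+Kx$, so $J_{a^*,b^*}'\equiv K$; both lie in $[-1,K]$ since $K>-1$ by Assumption \ref{assumption_K}. For $x\in(a^*,b^*)$, I plug $\Lambda(a^*,b^*)=0$ into \eqref{J_first_derivative} and convert to $\tilde h$ using $Z^{(q)}(x-a^*)=1+q\overline W^{(q)}(x-a^*)$ and $\tilde h'=h'+q$ to obtain the clean identity
\begin{align*}
J_{a^*,b^*}'(x) \;=\; -\int_{a^*}^{x} W^{(q)}(x-y)\,\tilde h'(y)\,\diff y \;-\;1.
\end{align*}
Since $W^{(q)}\geq 0$ and $\tilde h'\leq 0$ by Assumption \ref{assump_h_g}(1), the integral is nonpositive, giving $J_{a^*,b^*}'(x)\geq-1$. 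For the upper bound I use \eqref{def_value_function_simple}, which yields $J_{a^*,b^*}'(x)=K-\lambda(a^*,x)$, and invoke Proposition \ref{existence_a_b}(2) to conclude $\lambda(a^*,x)\geq 0$ on $(a^*,b^*)$, hence $J_{a^*,b^*}'(x)\leq K$.

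For convexity, rather than differentiating again (which would bring in $h''$, whose sign is not controlled), I show directly that $J_{a^*,b^*}'$ is nondecreasing. On the outer pieces the derivative is constant, and at the join points it is continuous: by Proposition \ref{proposition_smoothness}, $J_{a^*,b^*}$ is $C^1$ at both $a^*$ and $b^*$, with the right-limit at $a^*$ and left-limit at $b^*$ computed from the boxed identity above yielding exactly $-1$ and $K$. On $(a^*,b^*)$, for $a^*<x_1<x_2<b^*$ the identity gives
\begin{align*}
J_{a^*,b^*}'(x_2)-J_{a^*,b^*}'(x_1) = -\int_{a^*}^{x_1}\bigl[W^{(q)}(x_2-y)-W^{(q)}(x_1-y)\bigr]\tilde h'(y)\,\diff y \;-\;\int_{x_1}^{x_2}W^{(q)}(x_2-y)\tilde h'(y)\,\diff y,
\end{align*}
and both integrands are nonpositive (since $W^{(q)}$ is nonnegative and increasing, while $\tilde h'\leq 0$), so the difference is nonnegative. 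This is the only mildly nontrivial step; the rest is bookkeeping.

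For $J_{a^*,b^*}(x)\geq g(x)$: on $(b^*,\infty)$ equality holds by definition; on $[a^*,b^*]$, the representation \eqref{def_value_function_simple} and Proposition \ref{existence_a_b}(3) give $J_{a^*,b^*}(x)-g(x)=-\Lambda(a^*,x)\geq 0$; on $(-\infty,a^*)$, I combine $J_{a^*,b^*}(a^*)\geq g(a^*)$ (the $x=a^*$ case just shown) with the linear extension of slope $-1$ and $g$ of slope $K\geq -1$: for $x<a^*$, $J_{a^*,b^*}(x)-g(x)=(a^*-x)+J_{a^*,b^*}(a^*)-g(a^*)+g(a^*)-g(x)\geq (a^*-x)+(g(a^*)-g(x))=(1+K)(a^*-x)\geq 0$ by Assumption \ref{assumption_K}. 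This closes the argument.
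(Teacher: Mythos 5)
Your proof is correct and follows essentially the same route as the paper's: represent $J_{a^*,b^*}'$ on $(a^*,b^*)$ via $K-\lambda(a^*,\cdot)$ (your ``boxed identity'' is just this after substituting \eqref{lambda_rewrite}), show $\lambda(a^*,\cdot)$ is nonincreasing in its second argument, and then use the smooth fit of Proposition \ref{proposition_smoothness} together with the affine/terminal pieces to conclude convexity and the derivative bounds, with $J\geq g$ then either read off from Proposition \ref{existence_a_b}(3) (your way) or from convexity plus $J'\leq K$ on $(-\infty,b^*]$ and $J=g$ at $b^*$ (the paper's ``immediate'' step). One small remark: your motivation for the first-difference argument --- that differentiating again would ``bring in $h''$'' --- is not quite right, since \eqref{lambda_derivative_b} already expresses $\partial_b\lambda(a,b)$ solely in terms of $\tilde h'$ and $W^{(q)'}$ with no second derivative of $h$. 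What your direct monotonicity computation does genuinely buy is that it avoids differentiating $W^{(q)}$ at all, which is a mild robustness gain since the $C^1$ property of the scale function is only known under the extra hypotheses of Remark \ref{remark_smoothness_zero}(1); otherwise the two arguments are equivalent.
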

\begin{proof}
Differentiating \eqref{def_value_function_simple},
\begin{align}
J_{a^*,b^*}'(x)
&=  K - \lambda(a^*,x),   \quad a^* < x < b^*. \label{def_value_function_simple_derivative}
\end{align}
By this and \eqref{lambda_derivative_b}, $J_{a^*,b^*}$ is convex on $(a^*, b^*)$.  Because $J_{a^*,b^*}$ is smooth at $a^*$ and $b^*$ as in Proposition \ref{proposition_smoothness}, affine on $(-\infty, a^*]$ and equals $g$  on $[b^*, \infty)$, the claim is immediate.
\end{proof}


We are now ready to state the main theorem of this section; we shall prove this in the next subsection.
\begin{theorem}\label{verification}
Let $a^*$ and $b^*$ be as in Proposition \ref{existence_a_b}, and define $v(x) := J_{a^*,b^*}(x)$ as the value of the  functional associated with the strategies
$\xi^{a^*}_t := \sup_{0 \leq t' \leq t} (a^*-X_{t'}) \vee 0$, $t \geq 0$, and $\tau_{a^*,b^*}:= \inf \{ t \geq 0: U_t^{\xi^{a^*}} > b^* \}$.
Then, we have
\begin{enumerate}
\item    $v(x)\leq J(x;\xi,\tau_{b^*}^\xi)$ for any $\xi\in \Xi$;
\item   $J(x;\xi^{a^*},\tau) \leq v(x)$ for any $\tau\in\Upsilon$.
\end{enumerate}
In other words,   the pair $(\xi^{a^*}, \tau_{a^*,b^*})$ is a \emph{saddle point} and $v(\cdot)$ is the value function of the game.
\end{theorem}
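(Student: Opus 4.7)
The plan is to derive four variational inequalities for $v:=J_{a^*,b^*}$ and then translate them, via an It\^{o}--Meyer argument, into the two saddle-point bounds. Writing $\mathcal{L}$ for the infinitesimal generator of $X$, the properties to verify are: (i) $v\geq g$ on $\R$, with equality on $[b^*,\infty)$; (ii) $v'\geq -1$ on $\R$, with equality on $(-\infty,a^*]$; (iii) $(\mathcal{L}-q)v+h=0$ on $(a^*,b^*)$; and (iv) $(\mathcal{L}-q)v+h\geq 0$ on $(-\infty,a^*)$ while $(\mathcal{L}-q)v+h\leq 0$ on $(b^*,\infty)$. Items (i)--(ii) are essentially recorded in Proposition \ref{convexity_SN} together with the boundary definitions of $v$ outside $(a^*,b^*)$. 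Item (iii) follows from the explicit representation \eqref{def_value_function}, since each of $Z^{(q)}(\cdot-a^*)$, $\overline{Z}^{(q)}(\cdot-a^*)$ and $\varphi_{a^*}(\cdot;h)$ interacts with $\mathcal{L}-q$ in the manner dictated by the defining Laplace transform of $W^{(q)}$. For (iv) I would compute $(\mathcal{L}-q)v+h$ directly. On $(-\infty,a^*)$, $v$ is affine with slope $-1$, so $\mathcal{L}v\equiv-\mu$; combined with the identity $v(a^*)=(h(a^*)-\mu)/q$ read off from \eqref{def_value_function}, the expression collapses to $\tilde h(x)-\tilde h(a^*)\geq 0$ by Assumption \ref{assump_h_g}(1). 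On $(b^*,\infty)$, $v=g$ is affine so $\mathcal{L}g\equiv K\mu$; monotonicity of $\tilde h$ reduces the inequality to its value at $b^*$, which is nonpositive because $v\geq g$ globally while $(\mathcal{L}-q)v(b^{*-})+h(b^*)=0$ by (iii).

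With the VI in hand, fix any $\xi\in\Xi$ and apply the It\^{o}--Meyer formula to $t\mapsto e^{-qt}v(U_t^\xi)$, using the $C^2$ regularity of $v$ in the unbounded-variation case and $C^1$ plus convexity (Proposition \ref{convexity_SN}) in the bounded-variation case. Adding $\int_0^t e^{-qs}h(U_s^\xi)\,\diff s + \int_{[0,t]} e^{-qs}\,\diff\xi_s$ to both sides yields a decomposition
\begin{align*}
M_t := e^{-qt}v(U_t^\xi) + \int_0^t e^{-qs}h(U_s^\xi)\,\diff s + \int_{[0,t]} e^{-qs}\,\diff\xi_s = v(x)+N_t+A_t,
\end{align*}
where $N$ is a local martingale and $A$ is a predictable finite-variation process whose drift aggregates three contributions: $e^{-qs}[(\mathcal{L}-q)v+h](U_{s-}^\xi)\,\diff s$ from the L\'evy dynamics and running cost; $e^{-qs}[v'(U_{s-}^\xi)+1]\,\diff\xi_s^c$ from the continuous part of $\xi$; and $e^{-qs}[v(U_{s-}^\xi+\Delta\xi_s)-v(U_{s-}^\xi)+\Delta\xi_s]$ at each jump of $\xi$, which is nonnegative by (ii).

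For (2), take $\xi=\xi^{a^*}$: the reflected process lies in $[a^*,\infty)$, on which (iii)--(iv) make the $\diff s$-drift nonpositive; the continuous part of $\xi^{a^*}$ acts only when $U^{\xi^{a^*}}=a^*$, where $v'+1=0$; and each jump of $\xi^{a^*}$ moves the state from some $y<a^*$ to $a^*$, contributing $v(a^*)-v(y)+(a^*-y)=0$ since $v'\equiv-1$ on $(-\infty,a^*]$. Hence $M$ is a local supermartingale, and optional sampling (after localization) gives $v(x)\geq\E_x[M_\tau]\geq J(x;\xi^{a^*},\tau)$ via $v\geq g$. For (1), take $\tau=\tau_{b^*}^\xi$ with $\xi\in\Xi$ arbitrary: before $\tau_{b^*}^\xi$ the process $U^\xi$ lies in $(-\infty,b^*]$, on which $(\mathcal{L}-q)v+h\geq 0$ by (iii)--(iv), and all $\xi$-drifts are nonnegative by (ii); thus $M$ is a local submartingale, so $v(x)\leq\E_x[M_{\tau_{b^*}^\xi}]=J(x;\xi,\tau_{b^*}^\xi)$, where the final equality uses $v=g$ on $[b^*,\infty)$ for the terminal value together with the absence of positive jumps of $X$ (so the stopping state lies in the region where $v\equiv g$).

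The main technical obstacle will be item (iv): the opposite signs of $(\mathcal{L}-q)v+h$ on $(-\infty,a^*)$ and $(b^*,\infty)$ are precisely what makes both saddle bounds work, but each requires the explicit computation sketched above together with the tilted monotonicity Assumption \ref{assump_h_g}(1). A secondary difficulty is the integrability control needed for optional sampling: $v$ grows at most affinely at $\pm\infty$ by \eqref{def_value_function} and the affine form of $v$ outside $(a^*,b^*)$, which, paired with Assumption \ref{assump_finiteness_mu}, yields an $L^1$-bound on $\sup_t e^{-qt}|v(U_t^\xi)|$ along a suitable localizing sequence and allows one to pass from the local (sub/super)martingale property of $M$ to genuine expectation inequalities.
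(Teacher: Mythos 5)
Your overall strategy mirrors the paper's: establish the variational inequalities for $v=J_{a^*,b^*}$, apply the Meyer--It\^o formula to $e^{-qt}v(U_t^\xi)$, split the drift into $\diff s$, continuous-$\xi$ and jump-$\xi$ contributions, and then use localization plus a transversality estimate to pass to the limit. Your direct computation on $(-\infty,a^*)$ --- collapsing $(\mathcal{L}-q)v+h$ to $\tilde h(x)-\tilde h(a^*)\geq 0$ via $v(a^*)=(h(a^*)-\mu)/q$ --- is correct and a slightly cleaner route to Proposition~\ref{verification_1}(2) than the paper's derivative argument.

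However, the step establishing $(\mathcal{L}-q)v+h\leq 0$ on $(b^*,\infty)$ contains a genuine gap. You write that $v=g$ on $(b^*,\infty)$ is affine ``so $\mathcal{L}g\equiv K\mu$'' and then invoke monotonicity of $\tilde h$ to reduce the inequality to its value at $b^*$. But $\mathcal{L}v\neq\mathcal{L}g$ on $(b^*,\infty)$: for a spectrally negative process, the generator's integral term at a point $x>b^*$ samples $v(x+z)$ for $z<0$, and for $x+z<b^*$ the function $v$ differs from $g$. Consequently $\mathcal{L}v$ is \emph{not} a constant on $(b^*,\infty)$, and monotonicity of $\tilde h$ alone cannot close the argument. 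This is precisely where the paper invests effort (Lemma~\ref{lemma_generator_b_plus}): it sets $\tilde J_{a^*,b^*}:=J_{a^*,b^*}-g$, observes that $\tilde J_{a^*,b^*}$ is nonnegative, decreasing on $(-\infty,b^*)$ and zero on $[b^*,\infty)$ (Proposition~\ref{convexity_SN}), and then hand-computes the nonlocal difference
\begin{align*}
\mathcal{L}\tilde J_{a^*,b^*}(y)-\mathcal{L}\tilde J_{a^*,b^*}(x)
= \int_{(-\infty,b^*-y)}\bigl[\tilde J_{a^*,b^*}(y+z)-\tilde J_{a^*,b^*}(x+z)\bigr]\nu(\diff z)
 - \int_{[b^*-y,b^*-x)}\tilde J_{a^*,b^*}(x+z)\,\nu(\diff z)
\end{align*}
for $y>x>b^*$, showing each piece is nonpositive. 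A similar remark applies at the endpoint: showing $(\mathcal{L}-q)v(b^{*+})+h(b^*)\leq 0$ from $(\mathcal{L}-q)v(b^{*-})+h(b^*)=0$ requires, when $\sigma>0$, the one-sided inequality $v''(b^{*-})\geq v''(b^{*+})$ which the paper derives from $\partial\lambda/\partial b\leq 0$; ``$v\geq g$ globally'' does not supply this. You need to fill in both pieces for your item~(iv) to stand.
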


\subsection{Verification of optimality}
In order to prove Theorem \ref{verification}, we first present some preliminary results.
%

Let $\mathcal{L}$ be the infinitesimal generator associated with
the process $X$ applied to a sufficiently smooth function $f$
\begin{align*}
\mathcal{L} f(x) &:= c f'(x) + \frac 1 2 \sigma^2 f''(x) + \int_{(-\infty,0)} \left[ f(x+z) - f(x) -  f'(x) z 1_{\{-1 < z < 0\}} \right] \nu(\diff z), \quad x \in \R.
\end{align*}
By Remark \ref{remark_smoothness_zero}(1) and Proposition \ref{proposition_smoothness}, the function $J_{a^*,b^*}$ is $C^1 (\R)$ when $X$ is of bounded variation while it is  $C^1 (\R)$ and  $C^2(\R \backslash \{ b^*\}) $ when $X$ is of unbounded variation. Moreover, by Assumption \ref{assump_finiteness_mu} and Proposition \ref{convexity_SN}, the integral part of $\mathcal{L}J_{a^*,b^*}$ is finite.
Hence, $\mathcal{L} J_{a^*,b^*} (\cdot)$ makes sense at least anywhere on $\R\backslash \{ b^*\}$.

\begin{proposition} \label{verification_1} The following relations are satisfied.
\begin{enumerate}
\item $(\mathcal{L}-q) J_{a^*,b^*}(x) + h(x)= 0$ for $ x \in (a^*, b^*)$,
\item $(\mathcal{L}-q) J_{a^*,b^*}(x) + h(x) \geq 0$ for $x  \in (-\infty,a^*)$.
\end{enumerate}
\end{proposition}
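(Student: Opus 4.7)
The plan is to handle parts (1) and (2) by different methods: (1) via a strong Markov / Itô argument that turns the dynamic programming relation into the desired PDE, and (2) by direct computation exploiting the affine form of $J_{a^*,b^*}$ on $(-\infty, a^*)$.

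For (1), I would fix $x \in (a^*, b^*)$, pick $\epsilon > 0$ small enough that $[x-\epsilon, x+\epsilon] \subset (a^*, b^*)$, and set $\sigma_\epsilon := \inf\{t \geq 0 : X_t \notin (x-\epsilon, x+\epsilon)\}$. Since neither reflection nor stopping is active before $\sigma_\epsilon$, the strong Markov property applied to the definition \eqref{J_a_b_def} of $J_{a^*,b^*}$ gives
\begin{equation*}
J_{a^*,b^*}(x) = \E_x\left[\int_0^{\sigma_\epsilon} e^{-qt} h(X_t)\diff t + e^{-q\sigma_\epsilon} J_{a^*,b^*}(X_{\sigma_\epsilon})\right].
\end{equation*}
Proposition \ref{proposition_smoothness} together with Remark \ref{remark_smoothness_zero}(1) grants enough smoothness to apply the generalized Itô formula for \lev processes to $e^{-qt}J_{a^*,b^*}(X_t)$ on $[0, \sigma_\epsilon]$; taking expectations yields
\begin{equation*}
\E_x\bigl[e^{-q\sigma_\epsilon}J_{a^*,b^*}(X_{\sigma_\epsilon})\bigr] = J_{a^*,b^*}(x) + \E_x\left[\int_0^{\sigma_\epsilon} e^{-qt}(\mathcal{L}-q)J_{a^*,b^*}(X_t)\diff t\right].
\end{equation*}
Subtracting these identities, dividing by $\E_x[\sigma_\epsilon] > 0$, and sending $\epsilon \downarrow 0$ then produces (1) by continuity of $(\mathcal{L}-q)J_{a^*,b^*} + h$ at $x$.

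For (2), I would use that on $(-\infty, a^*)$, $J_{a^*,b^*}(x) = a^* - x + J_{a^*,b^*}(a^*)$, where evaluating \eqref{def_value_function} at $a^*$ (using $Z^{(q)}(0) = 1$, $\overline{Z}^{(q)}(0) = 0$, and $\varphi_{a^*}(a^*;h) = 0$) gives $J_{a^*,b^*}(a^*) = (h(a^*) - \mu)/q$. Then $J_{a^*,b^*}' = -1$ and $J_{a^*,b^*}'' = 0$ on $(-\infty, a^*)$, and for $x < a^*$, $z < 0$ one has $x+z < a^*$, so $J_{a^*,b^*}(x+z) - J_{a^*,b^*}(x) = -z$. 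The nonlocal part of $\mathcal{L}J_{a^*,b^*}(x)$ therefore equals
\begin{equation*}
\int_{(-\infty, 0)}\bigl(-z + z\, 1_{\{-1<z<0\}}\bigr)\nu(\diff z) = -\int_{(-\infty, -1]} z\,\nu(\diff z) = c-\mu,
\end{equation*}
via $\mu = \psi'(0^+) = c + \int_{(-\infty,-1]}z\,\nu(\diff z)$. Combining, $\mathcal{L}J_{a^*,b^*}(x) = -c + (c-\mu) = -\mu$, so direct substitution gives
\begin{equation*}
(\mathcal{L}-q)J_{a^*,b^*}(x) + h(x) = -\mu - q(a^* - x) - (h(a^*) - \mu) + h(x) = \tilde{h}(x) - \tilde{h}(a^*) \geq 0,
\end{equation*}
with the final inequality on $(-\infty, a^*]$ coming from Assumption \ref{assump_h_g}(1).

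The main obstacle will be the Itô step of (1), for which $J_{a^*,b^*}$ must be smooth enough in a full neighbourhood of the trajectory up to $\sigma_\epsilon$. By Proposition \ref{proposition_smoothness}, $J_{a^*,b^*}$ is $C^2$ on $(a^*, b^*)$ in the unbounded variation case and $C^1$ on $(a^*, b^*)$ in the bounded variation case (where $\sigma=0$ makes the second-order term in Itô absent anyway); choosing $\epsilon$ so that $[x-\epsilon, x+\epsilon] \subset (a^*, b^*)$ handles the continuous component, while integrability of the jump contribution to $\mathcal{L}J_{a^*,b^*}$ is secured by Assumption \ref{assump_finiteness_mu} together with the at-most-linear growth of $J_{a^*,b^*}$ furnished by Proposition \ref{convexity_SN}.
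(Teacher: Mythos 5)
Your proposal is correct, but the two parts diverge from the paper's argument to different degrees.

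For part (2) you travel essentially the same road the authors do, only you integrate where they differentiate. The paper computes $\frac{\partial}{\partial x}\bigl[(\mathcal{L}-q)J_{a^*,b^*}(x)+h(x)\bigr]=\tilde h'(x)\le 0$ on $(-\infty,a^*)$ and then invokes the smooth fit at $a^*$ and part (1) to anchor the monotone function at $0$. You instead evaluate $\mathcal L J_{a^*,b^*}$ explicitly on the affine region, arriving at the closed form $(\mathcal{L}-q)J_{a^*,b^*}(x)+h(x)=\tilde h(x)-\tilde h(a^*)\ge 0$; this is a compressed, self-contained version of the same calculation (your use of \eqref{def_value_function} to get $J_{a^*,b^*}(a^*)=(h(a^*)-\mu)/q$ secretly plays the role of the smooth-fit anchoring). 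Your computation is accurate: using $\psi'(0^+)=c+\int_{(-\infty,-1]}z\,\nu(\diff z)=\mu$ and $J'=-1$, $J''=0$ on $(-\infty,a^*)$ does indeed give $\mathcal L J_{a^*,b^*}\equiv -\mu$ there.

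For part (1) you genuinely deviate from the paper. The authors never write a Dynkin-type identity: they exploit that the processes $e^{-q(t\wedge T^-_{a^*}\wedge T^+_{b^*})}Z^{(q)}(X-a^*)$ and $e^{-q(t\wedge T^-_{a^*}\wedge T^+_{b^*})}R^{(q)}(X-a^*)$ are martingales (Proposition 2 of \cite{Avram_et_al_2007}), deduce directly that $(\mathcal{L}-q)Z^{(q)}(\cdot-a^*)=(\mathcal{L}-q)R^{(q)}(\cdot-a^*)=0$ on $(a^*,b^*)$, add the locally computed identity $(\mathcal{L}-q)\varphi_{a^*}(\cdot;h)=h$, and read off (1) from the representation \eqref{def_value_function}. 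Your stopping-time approach replaces these structural facts by a single application of the (Meyer-)It\^o formula up to $\sigma_\epsilon$ followed by the usual division-by-$\E_x[\sigma_\epsilon]$ and $\epsilon\downarrow 0$. This is a valid alternative, and it is arguably more elementary in that it does not rely on the martingale characterizations of $Z^{(q)}$ and $\overline Z^{(q)}$; but it trades that dependence for the need to justify, \emph{before} the desired identity is established, both the strong-Markov decomposition $J_{a^*,b^*}(x)=\E_x\bigl[\int_0^{\sigma_\epsilon}e^{-qt}h(X_t)\diff t+e^{-q\sigma_\epsilon}J_{a^*,b^*}(X_{\sigma_\epsilon})\bigr]$ (which, since $J_{a^*,b^*}$ is not the game value but the cost of a specific pair of barrier strategies, should be argued by noting $\xi^{a^*}_t\equiv 0$ and $\tau_{a^*,b^*}>\sigma_\epsilon$ on $[0,\sigma_\epsilon)$, together with the fact that $J_{a^*,b^*}(X_{\sigma_\epsilon})$ correctly embeds an immediate control cost when $X_{\sigma_\epsilon}<a^*$) and the applicability of It\^o across the whole range of the negative jumps, which is where the global $C^1(\R)$ ($C^2(\R\backslash\{b^*\})$ for unbounded variation) regularity from Proposition \ref{proposition_smoothness} and Remark \ref{remark_smoothness_zero}(1) must be invoked, not merely smoothness on the window $[x-\epsilon,x+\epsilon]$. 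You acknowledge these obstacles but resolve them only partially; in a full write-up they would need the same level of care that the paper instead sidesteps by using the known fluctuation-theoretic identities. Both routes get you there; the paper's is shorter \emph{given} that the scale-function machinery is already in place, yours is more self-contained.
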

\begin{proof}
(1) By Proposition 2 of \cite{Avram_et_al_2007} and as in the proof of Theorem 8.10 of \cite{Kyprianou_2006}, the processes
\begin{align*}
e^{-q (t \wedge T^-_{a^*} \wedge T^+_{b^*})} Z^{(q)}(X_{t \wedge T^-_{a^*} \wedge T^+_{b^*}} - a^*) \quad \textrm{and} \quad e^{-q (t \wedge T^-_{a^*} \wedge T^+_{b^*})}  R^{(q)} (X_{t \wedge T^-_{a^*} \wedge T^+_{b^*}}- a^*), \quad t \geq 0,
\end{align*}
with $R^{(q)}(x) := \overline{Z}^{(q)}(x) + \mu /q$, $x \in \R$, are martingales. Thanks to the smoothness of $Z^{(q)}$ and $\overline{Z}^{(q)}$ on $(0,\infty)$ as in Remark \ref{remark_smoothness_zero}(1), we obtain $(\mathcal{L}-q) R^{(q)}(y-a^*) = (\mathcal{L}-q) Z^{(q)}(y-a^*) = 0$ for any $y \in (a^*, b^*)$. This step is similar to the proof of  Theorem 2.1 in \cite{Bayraktar_2012}.

On the other hand, as in the proof of Lemma 4.5 of \cite{Egami-Yamazaki-2010-1},
\begin{align*}
(\mathcal{L}-q) \varphi_{a^*} (x;h) = h(x).
\end{align*}
Hence in view of  \eqref{def_value_function},  (1) is proved.

 (2)  For $x <a^*$, because $\mathcal{L} J_{a^*,b^*}$ is a constant (due to no positive jumps) and by \eqref{z_below_zero}, we have
 \begin{align}
\frac \partial {\partial x} \left[ (\mathcal{L}-q) J_{a^*,b^*}(x) + h(x) \right] = q + h'(x) = \tilde{h}'(x) \leq 0, \quad  x <a^*. \label{generator_less}
\end{align}
Hence $(\mathcal{L}-q) J_{a^*,b^*}(x) + h(x)$ is decreasing on $(-\infty, a^*)$.  Finally, the proof is complete by (1) and the smoothness condition at $a^*$ as in Proposition  \ref{proposition_smoothness}.
\end{proof}
On the right hand side  of the continuation region we have the next result.
\begin{lemma} \label{lemma_generator_b_plus} We have
$(\mathcal{L}-q) J_{a^*,b^*}(x) + h(x) \leq 0$ for $x \in (b^*, \infty)$.
\end{lemma}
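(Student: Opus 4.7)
The plan is to express the target quantity in a form that separates the affine part $g$ from a nonnegative residual, and then argue separately at the boundary $b^{*+}$ and on the open interval $(b^*,\infty)$. Since $g(x) = C + Kx$ is affine with $g' \equiv K$ and $g'' \equiv 0$, a direct computation from \eqref{laplace_spectrally_positive} gives $\mathcal{L} g(x) = K \psi'(0^+) = K\mu$. Let $u := J_{a^*,b^*} - g$, which by Proposition \ref{convexity_SN} is nonnegative on $\R$ and vanishes on $[b^*,\infty)$. Using $J_{a^*,b^*}(x) = g(x)$ for $x > b^*$, I would rewrite
\begin{align*}
(\mathcal{L}-q) J_{a^*,b^*}(x) + h(x) = K\mu - q g(x) + h(x) + \int_{(-\infty,\, b^*-x)} u(x+z)\, \nu(\diff z), \qquad x > b^*,
\end{align*}
truncating the integral at $z = b^*-x$ because $u(x+z) = 0$ for $x+z \geq b^*$.

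The second step is the boundary value at $b^{*+}$. Proposition \ref{verification_1}(1) already gives $(\mathcal{L}-q)J_{a^*,b^*}(b^{*-}) + h(b^*) = 0$, so I only need $(\mathcal{L}-q)J_{a^*,b^*}(b^{*+}) \leq (\mathcal{L}-q)J_{a^*,b^*}(b^{*-})$. The continuity of $J_{a^*,b^*}$ at $b^*$ and the $C^1$ smooth fit there (Proposition \ref{proposition_smoothness}(2)) show that the drift and jump parts of $\mathcal{L}J_{a^*,b^*}$ are continuous across $b^*$; only the Brownian part $\frac{1}{2}\sigma^2 J_{a^*,b^*}''$ can be discontinuous, dropping from $\frac{1}{2}\sigma^2 J_{a^*,b^*}''(b^{*-}) \geq 0$ (the nonnegativity follows from the convexity in Proposition \ref{convexity_SN}) down to $0$. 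This yields the desired boundary inequality.

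Finally, I would show that $x \mapsto (\mathcal{L}-q)J_{a^*,b^*}(x) + h(x)$ is strictly decreasing on $(b^*,\infty)$. Differentiating the displayed formula in $x$ (with no boundary contribution at $z = b^*-x$ since $u$ vanishes there) gives
\begin{align*}
\frac{d}{dx}\bigl[(\mathcal{L}-q)J_{a^*,b^*}(x) + h(x)\bigr] = -qK + h'(x) + \int_{(-\infty,\, b^*-x)} u'(x+z)\, \nu(\diff z).
\end{align*}
By \eqref{h_tilde_SN} and Assumption \ref{assump_h_g}(1), $h'(x) = \tilde{h}'(x) - q \leq -q$, and by Proposition \ref{convexity_SN}, $u'(y) = J_{a^*,b^*}'(y) - K \leq 0$. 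The right-hand side is therefore at most $-q(K+1) < 0$ by Assumption \ref{assumption_K}, and combined with the boundary step the claim follows. The main technical obstacles I expect are justifying the interchange of differentiation and integration (which follows from dominated convergence using $u' \in [-1-K,0]$ from Proposition \ref{convexity_SN} and the finiteness of $\nu$ on sets bounded away from $0$, in conjunction with Assumption \ref{assump_finiteness_mu}) and the second-derivative jump analysis at $b^*$, whose sign depends crucially on the convexity provided by Proposition \ref{convexity_SN}.
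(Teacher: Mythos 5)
Your proof is correct and follows essentially the same approach as the paper: establish nonpositivity at $b^{*+}$ via the drop in $\tfrac{1}{2}\sigma^2 J_{a^*,b^*}''$ across $b^*$ (whose sign is governed by convexity), then show the quantity decreases on $(b^*,\infty)$ by pairing $\tilde h'\le 0$ with the nonpositivity and monotonicity of $u=J_{a^*,b^*}-g$ in the jump part. The only presentational difference is that the paper argues with finite differences $\mathcal{L}\tilde J_{a^*,b^*}(y)-\mathcal{L}\tilde J_{a^*,b^*}(x)$ rather than differentiating under the $\nu$-integral, which sidesteps the dominated-convergence check you flag; both routes yield the same dominant bound $-q(K+1)<0$.
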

\begin{proof}
We first prove $(\mathcal{L}-q) J_{a^*,b^*}(b^{*+}) + h(b^*)\leq 0$. By the smooth fit at $b^*$, this holds for $\sigma = 0$.  For $\sigma > 0$, we have by differentiating \eqref{def_value_function_simple_derivative} further and by \eqref{lambda_derivative_b},
\begin{align*}
J_{a^*,b^*}''(b^{*-}) =  -  \frac \partial {\partial b}\lambda(a^*,b^{*-})  \geq 0 = J_{a^*,b^*}''(b^{*+}).
\end{align*}
Hence by the smooth fit at $b^*$ and  Proposition \ref{verification_1}(1),
\begin{align*}
0 = (\mathcal{L}-q) J_{a^*,b^*}(b^{*-})  + h(b^*) \geq (\mathcal{L}-q) J_{a^*,b^*}(b^{*+}) + h(b^*) .
\end{align*}
\noindent

Now it remains to show $(\mathcal{L}-q) J_{a^*,b^*}(x) + h(x)$ is decreasing on $(b^*, \infty)$.
For any $y > x > b^*$, by Assumptions \ref{assumption_K} and \ref{assump_h_g},
\begin{align*}
&[(\mathcal{L}-q) J_{a^*,b^*}(y) + h(y)] - [(\mathcal{L}-q) J_{a^*,b^*}(x) + h(x)]  \\
&=\mathcal{L} J_{a^*,b^*}(y)  - \mathcal{L} J_{a^*,b^*}(x)  - q(g(y) - g(x)) + h(y) - h(x) \\
&\leq \mathcal{L} J_{a^*,b^*}(y)  - \mathcal{L} J_{a^*,b^*}(x)  - qK(y-x) - q(y-x) \\
&\leq \mathcal{L} J_{a^*,b^*}(y)  - \mathcal{L} J_{a^*,b^*}(x).
\end{align*}
In order to show that the right-hand side is nonpositive, let us define
\begin{align*}
\tilde{J}_{a^*,b^*} := J_{a^*,b^*} - g.
\end{align*}
Because $\mathcal{L} g$ is a constant, we have $\mathcal{L} J_{a^*,b^*}(y)  - \mathcal{L} J_{a^*,b^*}(x) = \mathcal{L} \tilde{J}_{a^*,b^*}(y)  - \mathcal{L} \tilde{J}_{a^*,b^*}(x)$.
Now because $\tilde{J}_{a^*,b^*}$ is decreasing on $(-\infty,b^*)$ by Proposition \ref{convexity_SN} and is zero on $[b^*,\infty)$, we have
\begin{align*}
\mathcal{L} \tilde{J}_{a^*,b^*}(y) - \mathcal{L} \tilde{J}_{a^*,b^*}(x) &=  \int_{(-\infty,0)} \left[  \tilde{J}_{a^*,b^*}(y+z) - \tilde{J}_{a^*,b^*}(y) - \tilde{J}_{a^*,b^*}'(y) z 1_{\{-1 < z < 0\}} \right] \nu(\diff z) \\
&- \int_{(-\infty,0)}\left[  \tilde{J}_{a^*,b^*}(x+z) -  \tilde{J}_{a^*,b^*}(x) -   \tilde{J}_{a^*,b^*}'(x) z 1_{\{-1 < z < 0\}} \right] \nu(\diff z) \\
 &=  \int_{(-\infty,b^*-y)} \tilde{J}_{a^*,b^*}(y+z)  \nu(\diff z) - \int_{(-\infty,b^*-x)}\tilde{J}_{a^*,b^*}(x+z) \nu(\diff z) \\
&= \int_{(-\infty,b^*-y)} \left[ \tilde{J}_{a^*,b^*}(y+z)  - \tilde{J}_{a^*,b^*}(x+z) \right] \nu(\diff z) - \int_{[b^*-y,b^*-x)} \tilde{J}_{a^*,b^*}(x+z) \nu(\diff z),
\end{align*}
which is nonpositive because $\tilde{J}_{a^*,b^*}$ is nonnegative and decreasing. This proves the claim.

\end{proof}

\begin{lemma}\label{transcondition} Let  $\underline{X}$ be the running minimum process of $X$.  We have $\lim_{t \rightarrow \infty} e^{-qt}\E \left[ \underline{X}_t \right] = 0$.
\end{lemma}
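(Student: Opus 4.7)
The plan is to show that $\int_0^\infty e^{-qt}\,\E[-\underline{X}_t]\,\diff t <\infty$ and then exploit the monotonicity of $t\mapsto \E[-\underline{X}_t]$ to upgrade integrability to the required pointwise decay.

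First, I would introduce an exponentially distributed random variable $e_q$ with parameter $q$, independent of $X$, and compute $\E[-\underline{X}_{e_q}]$ explicitly via the Wiener--Hopf decomposition. The key input is the creeping-upward identity $\E_0[e^{-qT_x^+}]=e^{-\Phi(q)x}$ for $x\geq 0$, which holds for any spectrally negative \lev process (this is standard and is the building block for the formulas collected in \eqref{laplace_in_terms_of_z}). Since
\[
\p_0(\overline{X}_{e_q}\geq x)=\p_0(T_x^+\leq e_q)=\E_0[e^{-qT_x^+}]=e^{-\Phi(q)x}, \qquad x\geq 0,
\]
the running supremum $\overline{X}_{e_q}$ is $\mathrm{Exp}(\Phi(q))$-distributed, hence $\E[\overline{X}_{e_q}]=1/\Phi(q)$. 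Using Assumption \ref{assump_finiteness_mu} and Fubini, $\E[X_{e_q}]=\int_0^\infty q e^{-qt}\mu t\,\diff t=\mu/q$. By the classical time-reversal duality for \lev processes, $\underline{X}_{e_q}\stackrel{d}{=} X_{e_q}-\overline{X}_{e_q}$, so
\[
\E[-\underline{X}_{e_q}]=\E[\overline{X}_{e_q}]-\E[X_{e_q}]=\frac{1}{\Phi(q)}-\frac{\mu}{q}<\infty.
\]

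Next, conditioning on $e_q$ and applying Tonelli (legitimate because $-\underline{X}_t\geq 0$), I would write
\[
\E[-\underline{X}_{e_q}]=\int_0^\infty q e^{-qt}\,\E[-\underline{X}_t]\,\diff t,
\]
so the previous step yields $\int_0^\infty e^{-qt}\,\E[-\underline{X}_t]\,\diff t<\infty$.

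Finally, I would observe that $g(t):=\E[-\underline{X}_t]$ is nonnegative and nondecreasing, since $t\mapsto \underline{X}_t$ is pathwise nonincreasing. For any such function whose Laplace transform at $q$ is finite, the elementary inequality
\[
\int_T^\infty e^{-qt}g(t)\,\diff t\;\geq\; g(T)\int_T^\infty e^{-qt}\,\diff t\;=\;\frac{g(T)e^{-qT}}{q}
\]
together with the fact that the left-hand side vanishes as $T\to\infty$ forces $e^{-qT}g(T)\to 0$. Since $\E[\underline{X}_t]=-g(t)$, this is exactly the desired conclusion.

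The only non-routine ingredient is the exponential law of $\overline{X}_{e_q}$, which is a classical fluctuation identity for spectrally negative \lev processes and is already in the background of the scale-function machinery recalled in Section \ref{section_preliminary}; everything else is bookkeeping.
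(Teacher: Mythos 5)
Your proof is correct, and it shares the same core identity with the paper's proof but finishes differently. Both approaches start from the Wiener--Hopf factorization at an independent exponential clock to obtain $\E[-\underline{X}_{\eta}] = \frac{1}{\Phi(\cdot)}-\frac{\mu}{(\cdot)}$. The paper, however, works at rate $r<q$ and uses the elementary bound $\p\{T_{-y}^-<\eta(r)\}\geq e^{-rt}\p\{T_{-y}^-<t\}$ to get the \emph{pointwise} estimate $e^{-rt}\E[-\underline{X}_t]\leq \frac{1}{\Phi(r)}-\frac{\mu}{r}$, and then lets the extra factor $e^{-(q-r)t}$ push this bounded quantity to zero. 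You instead work directly at rate $q$: Tonelli turns $\E[-\underline{X}_{e_q}]<\infty$ into finiteness of the Laplace transform $\int_0^\infty e^{-qt}\E[-\underline{X}_t]\,\diff t$, and the monotonicity of $t\mapsto \E[-\underline{X}_t]$ upgrades integrability to the pointwise decay $e^{-qT}\E[-\underline{X}_T]\to 0$ via the tail estimate $g(T)e^{-qT}/q\leq\int_T^\infty e^{-qt}g(t)\,\diff t\to 0$. Your route avoids introducing the auxiliary rate $r$ and dispenses with the slack between $r$ and $q$; it is a small Tauberian-type argument that buys you a marginally cleaner conclusion at the cost of invoking Tonelli rather than a one-line probability inequality. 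Both are valid; the paper's version would also carry the slightly stronger dividend of bounding $e^{-rt}\E[-\underline{X}_t]$ uniformly for every $r<q$, though that extra strength is not needed here.
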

\begin{proof}
Under $\p = \p_0$, with $\eta(r)$ an independent exponential random variable with parameter $r > 0$, duality and the Wiener-Hopf factorization imply (see, e.g., (3.8) of \cite{Avram_et_al_2007})
\begin{align*}
X_{\eta(r)} - \underline{X}_{\eta(r)} \sim \exp(\Phi(r)).
\end{align*}
This together with $\E [X_{\eta(r)}]=\E [ \E [X_{\eta(r)} | \eta(r)]] = \E [ \eta(r) \mu ] = \mu/r$ gives
\begin{align*}
- \int_0^\infty r  e^{-r t}\E \underline{X}_t \diff t = -\E \left[ \underline{X}_{\eta(r)} \right] = \frac 1 {\Phi(r)} - \frac \mu r < \infty.
\end{align*}
On the other hand, we can also write $-\E[\underline{X}_{\eta(r)}]=\int_0^{\infty} \p \{ -\underline{X}_{\eta(r)}>y \} \diff y   =
\int_0^{\infty}  \p \{ T_{-y}^- < \eta(r) \} \diff y $, and
\begin{align*}
 \p \{ T_{-y}^- < \eta(r) \} &= \int_0^{\infty} r e^{-r s}\p\{  T_{-y}^-<s \}  \diff s\geq \int_t^{\infty} r e^{-r s}\p \{  T_{-y}^- <s \} \diff s\\
&\geq \int_t^{\infty} r e^{-r s}\p \{  T_{-y}^-<t \}  \diff s =  e^{-r t}\p \{  T_{-y}^- <t\}.
\end{align*}
Hence,
\begin{align*} 
\begin{split}
\infty &> \frac 1 {\Phi(r)} - \frac \mu r = -\E \left[ \underline{X}_{\eta(r)} \right] =\int_0^{\infty}\p \{  T_{-y}^- \leq \eta(r) \} \diff y \\
&\geq e^{- r t}\int_0^{\infty}\p \{  T_{-y}^-<t\} \diff y = e^{- r t}\E[-\underline{X}_{t}].
\end{split}
\end{align*}
This implies that $\limsup_{t\rightarrow \infty}e^{-rt}\E[-\underline{X}_{t}] < \infty$, for all $0 < r < q$, from which we conclude  the proof.
\end{proof}

We are now ready to prove Theorem \ref{verification}.

\begin{proof}[Proof of Theorem \ref{verification}]
We present the proof when $X$ is of unbounded variation. The proof for the bounded variation case is similar; while the smoothness of the function $J_{a^*,b^*}$ at $a^*$ is different, the following arguments in terms of the Meyer-It\^o formula clearly hold.

Let $\xi\in \Xi$ be arbitrary such that  $J(x;\xi,\tau_{b^*}^\xi) < \infty$
and hence
\begin{align}
\E_x [e^{-q \tau_{b^*}^\xi} (g(U_{\tau_{b^*}^\xi}^\xi))_+ 1_{\{\tau_{b^*}^\xi < \infty \}}] < \infty. \label{finiteness_policy_xi}
\end{align}
 Observe that when $x\geq b^*$ and $\Delta \xi_0 = 0$ the statement of the theorem holds, since
$v(x)=g(x)=J(x;\xi,\tau_{b^*}^\xi)$; when $x\geq b^*$ and $\Delta \xi_0 > 0$, $J(x;\xi,\tau_{b^*}^\xi) = \Delta \xi_0 + g(x+\Delta \xi_0) \geq g(x) = v(x)$ by Assumption \ref{assumption_K}.
 Hence we assume that $x< b^*$.

Recall that $v$ is  $C^1 (\R)$ and  $C^2(\R \backslash \{ b^*\})$ with bounded second derivatives near $b^*$.  Hence, using the Meyer-It\^o formula as in Theorem 4.71 of \cite{MR2273672},
 \begin{equation}\label{intbyparts}
 v(U_t^{\xi})-v(x)= \int_{[0,t]} v'(U_{s^-}^{\xi})\diff U_s^{\xi}+\frac{\sigma^2}{2}\int_0^t v'' (U_{s^-}^{\xi}) \diff s + \sum_{0\leq s\leq t}[v(U_s^{\xi})-v(U_{s^-}^{\xi})-v'(U_{s^-}^{\xi})\Delta U_s^{\xi}].
 \end{equation}
Further, if we denote $\xi^c$ as the continuous part of $\xi$, we can write
\begin{align}
 \int_{[0,t]} v'(U_{s^-}^{\xi})\diff U_s^{\xi}=\int_{[0,t]} v'(U_{s^-}^{\xi}) \diff X_s+\int_0^t v'(U_{s^-}^{\xi}) \diff\xi_s^c+\sum_{0\leq s\leq t}v'(U_{s^-}^{\xi})\Delta \xi_s. \label{decomposition_v_prime}
 \end{align}

 From the It\^o decomposition theorem (e.g., Theorem 2.1 of \cite{Kyprianou_2006}), we know that
 $$
 X_t=(\sigma B_t +c t)+  \left(\int_{[0,t]}\int_{(-\infty,-1]} x N(\diff s\times \diff x)\right)+\left(\lim_{\varepsilon\downarrow 0}\int_{[0,t]}\int_{(-1,-\varepsilon)}x(N(\diff s\times \diff x)-\nu(\diff x) \diff s)\right),
 $$
 where $\left\{B_t; t \geq 0 \right\}$ is a standard Brownian motion and $N$ is a Poisson random measure in   the measure space  $([0,\infty)\times(-\infty,0),\B [0,\infty)\times \B (-\infty,0), \diff t\times\nu( \diff x))$. The last term is a square integrable martingale, to which the limit converges uniformly on any compact $[0,T]$.

 Using this decomposition, we can write \eqref{decomposition_v_prime} as
 \begin{align*}
 \int_{[0,t]} v'(U_{s^-}^{\xi})\diff U_s^{\xi} &= \int_0^t \sigma v'(U_{s^-}^\xi) \diff B_s +\int_0^t c v'(U_{s^-}^\xi) \diff s+\int_{[0,t]} \int_{(-\infty,-1]} v'(U_{s^-}^\xi)y N(\diff s\times \diff y)\\
 &+ \lim_{\varepsilon\downarrow 0}\int_{[0,t]} \int_{(-1,-\varepsilon)} v'(U_{s^-}^\xi)y (N(\diff s\times \diff y)-\nu(\diff y)\diff s)\\
 &+\int_0^t v'(U_{s^-}^\xi) \diff \xi^c_s+\sum_{0\leq s\leq t}v'(U_{s^-}^\xi)\Delta \xi_s.
 \end{align*}

 Defining $A_t:=U_{t^-}^\xi+\Delta X_t$, $t\geq 0$, we have $A_t+\Delta \xi_t=U_{t}^\xi$, and hence we can write the last term in (\ref{intbyparts})  as
 \begin{align*}
 \sum_{0\leq s\leq t}[v(U_s^{\xi}) &- v(U_{s^-}^{\xi})-v'(U_{s^-}^{\xi})\Delta U_s^{\xi}]=\sum_{0\leq s\leq t}[v(A_{s}+\Delta \xi_s)-v(A_{s})-v'(U_{s^-}^\xi)\Delta \xi_s] 1_{\{\Delta\xi_s\neq 0\}}\\
 &+ \int_{[0,t]} \int_{(-\infty,0)}(v(U_{s^-}^\xi+y)-v(U_{s^-}^\xi)-v'(U_{s^-}^\xi)y )N(\diff s\times \diff y).
 \end{align*}
 Putting together the above expressions in (\ref{intbyparts}), we obtain
 \begin{align*}
 v(U_t^{\xi})-v(x) &= \int_0^t \sigma v'(U_{s^-}^\xi) \diff B_s +\int_0^t c v'(U_{s^-}^\xi)\diff s +\frac{\sigma^2}{2}\int_0^t  v''(U_{s^-}^\xi) \diff s\\
 &+ \lim_{\varepsilon\downarrow 0}\int_{[0,t]} \int_{(-1,-\varepsilon)} v'(U_{s^-}^\xi)y (N(\diff s\times \diff y)-\nu(\diff y)\diff s) + \int_0^t v'(U_{s^-}^\xi) \diff \xi^c_s\\
 &+ \sum_{0\leq s\leq t}[v(A_{s}+\Delta \xi_s)-v(A_{s})-v'(U_{s^-}^\xi)\Delta \xi_s] 1_{\{\Delta\xi_s\neq 0 \}}+\sum_{0<s\leq t}v'(U_{s^-}^\xi)\Delta \xi_s.\\
 &+ \int_{[0,t]} \int_{(-\infty,0)}(v(U_{s^-}^\xi+y)-v(U_{s^-}^\xi)-v'(U_{s^-}^\xi)y1_{\{y\in(-1,0)\}})N(\diff s\times \diff y).
 \end{align*}
Using integration by parts (see e.g.\ Corollary II.2 of \cite{MR2273672}) in $e^{-qt}v(U_t^{\xi})$ and the above expressions, it follows that
\begin{align}\label{Decomp}
 e^{-q t}v(U_t^{\xi})-v(x) &= \int_{[0,t]} e^{-qs} \diff v(U^\xi_s)-\int_0^t q e^{-qs} v(U_{s^-}^\xi) \diff s\\
 &= \int_0^t e^{-qs}(\mathcal{L}-q)v(U_{s^-}^\xi)\diff s+\int_0^t e^{-qs}v'(U_{s^-}^\xi)\diff \xi_s^c+J_t +M_t,\nonumber
 \end{align}
 with
 $$J_t:=\sum_{0\leq s\leq t}e^{-qs}[v(A_{s}+\Delta \xi_s)-v(A_{s})] 1_{\{\Delta\xi_s\neq 0\}}$$
 and
 \begin{align*}
 M_t &:= \int_0^t \sigma e^{-qs} v'(U_{s^-}^{\xi}) \diff B_s +\lim_{\varepsilon\downarrow 0}\int_{[0,t]} \int_{(-1,-\varepsilon)} e^{-qs}v'(U_{s^-}^{\xi})y (N(\diff s\times \diff y)-\nu(\diff y) \diff s)\\
 &+\int_{[0,t]} \int_{(-\infty,0)}e^{-qs}(v(U_{s^-}^\xi+y)-v(U_{s^-}^\xi)-v'(U_{s^-}^\xi)y1_{\{y\in(-1,0)\}})(N(\diff s\times \diff y)-\nu(\diff y) \diff s).
 \end{align*}
Since  $v'(x)\geq -1$ for $x\leq b^*$, observe that $v(A_{s}+\Delta \xi_s)-v(A_{s})\geq -\Delta\xi_s$ for $s\leq t\wedge\tau_{b^*}^\xi$.

For each $n\in \N$, define the stopping time $\tau_n$ as
$$
\tau_n :=\inf\{t>0\; : \;U_t^\xi<-n\}\wedge \tau_{b^*}^\xi,
$$
and the martingale $M=\{M_{t\wedge\tau_n};\; t\geq 0\}$, with $M_0=0$.  Then, since $(\mathcal{L}-q)v(x)\geq -h(x)$ for $x<b^*$ by Proposition \ref{verification_1} and since $v'(x)\geq -1$ for $x\leq b^*$ by Proposition \ref{convexity_SN},  taking expectation in
(\ref{Decomp}) gives by optional sampling that
$$
v(x)\leq  \E_x \left[\int_0^{t\wedge\tau_n}e^{-q s} h(U_{s^-}^\xi)\diff s+\int_{[0,t\wedge\tau_n]}e^{ -qs}\diff  \xi_s+e^{-q (t\wedge\tau_n)}v(U_{t\wedge\tau_n}^\xi)\right].
$$

We shall take limits first as $n \uparrow \infty$ and then as $t \uparrow \infty$ on the right-hand side.  Because $h$ is decreasing and $U_{s^-}^\xi \leq b^*$ for $s \in [0, t \wedge \tau_n]$, we have $\int_0^{t\wedge\tau_n}e^{-q s} (h(U_{s^-}^\xi))_- \diff s \leq \int_0^\infty e^{-qs}(h(b^*))_- \diff s < \infty$.  Hence we can apply the monotone convergence theorem and dominated convergence theorem, respectively, to the expectation of the positive and negative part of $\int_0^{t\wedge\tau_n}e^{-q s} h(U_{s^-}^\xi)\diff s$.  For $\int_{[0,t\wedge\tau_n]}e^{ -qs}\diff  \xi_s$, monotone convergence theorem applies because $\xi$ is nondecreasing.   Hence
\begin{align*}
\lim_{t \uparrow \infty} \lim_{n \uparrow \infty}\E_x \left[\int_0^{t\wedge\tau_n}e^{-q s} h(U_{s^-}^\xi)\diff s+\int_{[0,t\wedge\tau_n]}e^{ -qs}\diff  \xi_s\right] = \E_x \left[\int_0^{\tau_{b^*}^\xi}e^{-q s} h(U_{s^-}^\xi)\diff s+\int_{[0,{\tau_{b^*}^\xi}]}e^{ -qs}\diff  \xi_s\right].
\end{align*}
On the other hand,
\begin{align} \label{bound_v_stopped_tau_n}
\begin{split}
e^{-q (t\wedge\tau_n)}(v(U_{t\wedge\tau_n}^\xi))_+ &= e^{-q t}(v(U_{t}^\xi))_+ 1_{\{ t < \tau_n \}} + e^{-q \tau_n } (v(U_{\tau_n}^\xi))_+ 1_{\{ t \geq \tau_n, \tau_n < \tau_{b^*}^\xi \}} + e^{-q  \tau_{b^*}^\xi  } (g(U_{\tau_n}^\xi))_+ 1_{\{ t \geq \tau_n, \tau_n = \tau_{b^*}^\xi \}} \\
&\leq e^{-q (t \wedge \tau_n)}(v(U_{t \wedge \tau_n}^\xi))_+ 1_{\{ t \wedge \tau_n  < \tau_{b^*}^\xi \}}+ e^{-q  \tau_{b^*}^\xi  } (g(U_{\tau_{b^*}^\xi}^\xi))_+. 
\end{split}
\end{align}

Because $v'(x) = -1$ for $x < a^*$, we have a bound
$$
(v(x))_+ \leq \sup_{a^* \leq y \leq b^*} (v(y))_+  + (a^*-x)_+, \quad x \leq b^*.
$$
Hence, noting $U^\xi \geq X$,
\begin{align} \label{v_bounded_by_x}
\begin{split}
(v(U_{t \wedge \tau_n}^\xi))_+ 1_{\{ t \wedge \tau_n < \tau_{b^*}^\xi \}} \leq  \sup_{a^* \leq y \leq b^*} (v(y))_+  + (a^* - U_{t \wedge \tau_n}^\xi)_+ \leq \sup_{a^* \leq y \leq b^*} (v(y))_+  + (a^* - X_{t \wedge \tau_n})_+
\\ \leq \sup_{a^* \leq y \leq b^*} (v(y))_+  + |a^*| + (\underline{X}_t)_-,
\end{split}
\end{align}
which has a finite moment for any fixed $t \geq 0$.  This together with \eqref{finiteness_policy_xi} and  \eqref{bound_v_stopped_tau_n} shows by Fatou's lemma that
$$
\limsup_{n \uparrow \infty} \E_x \big[ e^{-q (t\wedge\tau_n)}v(U_{t\wedge\tau_n}^\xi) \big] \leq \E_x \big[ e^{-q (t\wedge \tau_{b^*}^\xi)}v(U_{t\wedge \tau_{b^*}^\xi}^\xi) \big].
$$

In order to take $t \uparrow \infty$, observe that
\begin{align} \label{bound_limsup_tau_n}
e^{-q (t\wedge \tau_{b^*}^\xi)}v(U_{t\wedge \tau_{b^*}^\xi}^\xi) =  e^{-q t}v(U_{t}^\xi) 1_{\{ t < \tau_{b^*}^\xi\}} + e^{-q \tau_{b^*}^\xi} g(U_{\tau_{b^*}^\xi}^\xi) 1_{\{ t \geq \tau_{b^*}^\xi\}}.
\end{align}
As in \eqref{v_bounded_by_x},
\begin{align*}
\E_x [(v(U_{t}^\xi))_+1_{\{ t < \tau_{b^*}^\xi\}}]  \leq \sup_{a^* \leq y \leq b^*} (v(y))_+  + |a^*| + \E_x \left[ (X_t)_-\right].
\end{align*}
Hence by Lemma \ref{transcondition} and  $\E_x [(v(U_{t}^\xi))_-1_{\{ t < \tau_{b^*}^\xi\}}] \leq \sup_{a^* \leq y \leq b^*} (v(y))_-$, the limit of $e^{-qt}\E_x [v(U_{t}^\xi) 1_{\{ t < \tau_{b^*}^\xi\}}]$ vanishes as $t \rightarrow \infty$;
 together with \eqref{finiteness_policy_xi} and \eqref{bound_limsup_tau_n},
\begin{align*}
\limsup_{t \rightarrow \infty}\E_x \big[e^{-q (t\wedge \tau_{b^*}^\xi)}v(U_{t\wedge \tau_{b^*}^\xi}^\xi) \big] \leq  \E_x \big[ e^{-q \tau_{b^*}^\xi} v(U_{\tau_{b^*}^\xi}^\xi) 1_{\{  \tau_{b^*}^\xi < \infty \}} \big] = \E_x \big[ e^{-q \tau_{b^*}^\xi} g(U_{\tau_{b^*}^\xi}^\xi) 1_{\{  \tau_{b^*}^\xi < \infty \}} \big].
\end{align*}
Putting altogether the results above, we conclude $v(x)\leq J(x;\xi,\tau_{b^*}^\xi)$.

For the second part of the theorem, let $\xi_t^{a^*}$  be as in the statement of the theorem. When $x<a^*$, the controller acts immediately, returning the process to the set $[a^*,\infty)$, i.e. $\xi_0^{a^*}=a^*-x$. Hence, we assume that the initial condition $x\geq a^*$. Fix an arbitrary stopping time $\tau\in \Upsilon$ such that $ J(x;\xi^{a^*},\tau) > - \infty$ and hence
\begin{align}
\E_x [e^{-q  \tau  } (g(U_{\tau}^{\xi^{a^*}}))_- 1_{\{\tau < \infty \}}] < \infty. \label{finiteness_policy_tau}
\end{align}
For each
$n\in \N$ define the stopping time $\tau_n$ as
$$
\tau_n :=\inf\{t>0\;: \;U_t^{\xi^{a^*}}>n\}\wedge \tau.
$$
From (\ref{Decomp}), Proposition \ref{verification_1}(1), Lemma \ref{lemma_generator_b_plus} and because $\diff \xi_s^{a^*,c} = 0$ (where $\xi_s^{a^*,c}$ is the continuous part of $\xi_s^{a^*}$) and $\Delta \xi_s^{a^*} = 0$ whenever $U_{s-}^{\xi^{a^*}} + \Delta X_s \in (a^*, \infty)$, the above localization argument gives
$$
 v(x)\geq  \E_x\left[\int_0^{t\wedge\tau_n}e^{-qs}h(U_{s^-}^{\xi^{a^*}})\diff s+\int_{[0,t\wedge\tau_n]}e^{ -qs}\diff  \xi_s^{a^*}  +e^{-q (t\wedge\tau_n)}v(U_{t\wedge\tau_n}^{\xi^{a^*}})\right].
$$
Similarly to the derivation above,
\begin{align*}
\lim_{t \uparrow \infty} \lim_{n \uparrow \infty}\E_x \left[\int_0^{t\wedge\tau_n}e^{-qs}h(U_{s^-}^{\xi^{a^*}})\diff s+\int_{[0,t\wedge\tau_n]}e^{ -qs}\diff  \xi_s^{a^*}  \right] = \E_x \left[\int_0^{\tau}e^{-q s} h(U_{s^-}^{\xi^{a^*}})\diff s+\int_{[0,\tau]}e^{ -qs}\diff  {\xi_s^{a^*}}\right].
\end{align*}
For the case $K \geq 0$, note that $v$ is bounded from below and hence Fatou's lemma implies
\begin{align*}
\liminf_{t \rightarrow \infty} \liminf_{n \rightarrow \infty}\E_x\left[e^{-q (t\wedge\tau_n)}v(U_{t\wedge\tau_n}^{\xi^{a^*}})\right] \geq \E_x\left[e^{-q \tau}v(U_{\tau}^{\xi^{a^*}}) 1_{\{\tau < \infty \}}\right]  + \liminf_{t \rightarrow \infty} \E_x\left[e^{-q t}v(U_{t}^{\xi^{a^*}}) 1_{\{\tau = \infty \}}\right] \\
\geq \E_x\left[e^{-q \tau}v(U_{\tau}^{\xi^{a^*}}) 1_{\{\tau < \infty \}}\right] \geq \E_x\left[e^{-q \tau}g(U_{\tau}^{\xi^{a^*}}) 1_{\{\tau < \infty \}}\right],
\end{align*}
where the second inequality holds because $v$ is bounded from below on $[a^*,\infty)$ for the case $K\geq 0$ and the third inequality holds since $v \geq g$.

  For the case $K < 0$,
\begin{align*}
e^{-q (t\wedge\tau_n)}(v(U_{t\wedge\tau_n}^{\xi^{a^*}}))_- &= e^{-q t}(v(U_{t}^{\xi^{a^*}}))_- 1_{\{ t < \tau_n \}} + e^{-q \tau_n } (v(U_{\tau_n}^{\xi^{a^*}}))_- 1_{\{ t \geq \tau_n, \tau_n < \tau \}} + e^{-q  \tau} (v(U_{\tau}^{\xi^{a^*}}))_- 1_{\{ t \geq \tau_n, \tau_n = \tau \}} \\
&\leq e^{-q (t \wedge \tau_n)}(v(U_{t \wedge \tau_n}^{\xi^{a^*}}))_- 1_{\{ t \wedge \tau_n < \tau \}}+ e^{-q  \tau  } (v(U_{\tau}^{\xi^{a^*}}))_-.
\end{align*}
By  \eqref{finiteness_policy_tau} and  $v \geq g$, $\E_x [e^{-q  \tau  } (v(U_{\tau}^{\xi^{a^*}}))_- ]< \infty$.

Recall $\xi^{a^*}_t :=   (a^* - \underline{X}_t)_+ = (a^* - \underline{X}_t) + (\underline{X}_t-a^*)_+ \leq (a^* - \underline{X}_t) + (x-a^*)_+$, and hence
\begin{align*}
U_{t}^{\xi^{a^*}} \leq a^* + (x-a^*)_+ + \overline{X}_t - \underline{X}_t.
\end{align*}
Now,
\begin{align} \label{v_bound_by_X_t}
\begin{split}
(v(U_{t \wedge \tau_n }^{\xi^{a^*}}))_-1_{\{t \wedge \tau_n  < \tau\}} &\leq  \inf_{a^* \leq y \leq b^*} (v(y))_-  + |K| ( U_{t \wedge \tau_n}^{\xi^{a^*}} - b^*)_+  \\ &\leq  \inf_{a^* \leq y \leq b^*} (v(y))_-  + |K| \left[( a^* + (x-a^*)_+ + \overline{X}_t - \underline{X}_t  - b^*)_+\right]
\\ &\leq \inf_{a^* \leq y \leq b^*} (v(y))_-  + |K|( a^* + (x-a^*)_+ - b^*)_+ + |K| \left( \overline{X}_t - \underline{X}_t \right).
\end{split}
\end{align}
Because the right-hand side has a finite moment, Fatou's lemma implies $\liminf_{n \rightarrow \infty}\E_x [e^{-q (t\wedge\tau_n)}v(U_{t\wedge\tau_n}^{\xi^{a^*}}) ] \geq \E_x [ e^{-q (t\wedge\tau)}v(U_{t\wedge\tau}^{\xi^{a^*}}) ]$.

Furthermore,
\begin{align*}
e^{-q (t\wedge\tau)}v(U_{t\wedge\tau}^{\xi^{a^*}}) = e^{-q t}v(U_{t}^{\xi^{a^*}}) 1_{\{ t  < \tau\}}+ e^{-q \tau}v(U_{\tau}^{\xi^{a^*}}) 1_{\{ t  \geq \tau\}}.
\end{align*}
 By Lemma \ref{transcondition} and \eqref{v_bound_by_X_t},
$e^{-qt}\E \big[ (v(U_{t }^{\xi^{a^*}}))_-1_{\{t   < \tau\}} \big] \xrightarrow{t \uparrow \infty} 0$.  This together with $(v(U_\tau^{\xi^{a^*}}))_+ \leq \sup_{a^* \leq y \leq b^*} (v(y))_+$ (thanks to $K < 0$)  shows $e^{-qt}\E \big[ v(U_{t }^{\xi^{a^*}})1_{\{t   < \tau\}} \big] \xrightarrow{t \uparrow \infty} 0$.

On the other hand, we have $e^{-q \tau}(v(U_{\tau}^{\xi^{a^*}}))_- 1_{\{ t  \geq \tau\}} \leq e^{-q \tau}(g(U_{\tau}^{\xi^{a^*}}))_- 1_{\{ \tau < \infty \}}$, which has a finite moment by \eqref{finiteness_policy_tau}.  Hence,
\begin{align*}
\liminf_{t \rightarrow \infty}\E_x \left[ e^{-q (t\wedge\tau)}v(U_{t\wedge\tau}^{\xi^{a^*}}) \right] \geq \E_x \left[  e^{-q \tau}v(U_{\tau}^{\xi^{a^*}}) 1_{\{\tau < \infty\}} \right] \geq \E_x \left[  e^{-q \tau}g(U_{\tau}^{\xi^{a^*}}) 1_{\{\tau < \infty\}} \right].
\end{align*}
Putting altogether, we have $v(x) \geq J(x;\xi^{a^*},\tau)$, as desired.

\end{proof}

\section{Spectrally positive \lev case} \label{section_spectrally_positive}
We now consider the spectrally positive \lev process for $X$. We assume throughout this section that its dual process $-X$ is a spectrally negative \lev process with a Laplace exponent \eqref{laplace_spectrally_positive}.  This is to say
\begin{align*}
\psi(s)  := \log \E \left[ e^{-s X_1} \right] =  c s +\frac{1}{2}\sigma^2 s^2 + \int_{(0,\infty)} (e^{-s z}-1 + s z 1_{\{0 < z < 1\}}) \nu (\diff z), \quad s \geq 0,
\end{align*}
where $\nu$ is a \lev measure with the support $(0,\infty)$ that satisfies the integrability condition $\int_{(0,\infty)} (1 \wedge z^2) \nu(\diff z) < \infty$.  For the case of bounded variation, we can define $\delta := c + \int_{(0,1)}z\, \nu(\diff z)$.
The first moment of $-X$ in \eqref{drift} is $\hat{\mu} = \psi'(0^+)$, which is assumed to be finite by Assumption \ref{assump_finiteness_mu}.

 The scale function $W^{(q)}$ used in this section is associated with the spectrally negative \lev process $-X$.
Throughout this section, we assume the differentiability of the scale function on $(0,\infty)$; this is satisfied if $X$ is of unbounded variation or the \lev measure has no atoms (see Remark \ref{remark_smoothness_zero}(1)).
\begin{assump}
We assume that $W^{(q)}$ is differentiable on $(0,\infty)$.
\end{assump}

\begin{remark}\label{Rem spec pos inf}
Lemma \ref{transcondition} also holds for the spectrally positive \lev process $X$.  Indeed, it is known that $- \underline{X}_{\eta(r)}$ is exponentially distributed with parameter $\Phi(r)$ and hence $- \E \underline{X}_{\eta(r)} = \Phi(r)^{-1}$.
\end{remark}

Recall Assumption \ref{assumption_K} again about our definition of the terminal cost $g$. Regarding the running cost function $h$, we focus on the case it is linear:
\begin{align}
h(x) = -\alpha x + \beta, \quad x \in \R, \label{h_g_SP}
\end{align}
for some constants $\alpha, \beta \in \R$ that satisfy the following.

\begin{assump}  \label{assump_h_g_SP}
The function
\begin{align*}
\hat{h}(x) := h(x) - Kqx- [Cq    + K \hat{\mu}], \quad x \in \R,
\end{align*}
is decreasing.  In other words,
\begin{align*}
\hat{\alpha} :=  \alpha + Kq > 0.
\end{align*}
\end{assump}
While we are assuming here that the running cost function $h$ is linear, Assumption \ref{assump_h_g_SP} is requiring a weaker condition on the slope in comparison to Assumption \ref{assump_h_g} that was assumed for the spectrally negative \lev case. Indeed, Assumption \ref{assump_h_g_SP} accommodates the case $q \geq \alpha > - K q$.   In fact, we will see in this case that the controller's optimal strategy becomes trivial (i.e.\ $\xi \equiv 0$).   This is intuitively clear because, for small $x$ sufficiently far from the stopper's exercise region, moving the process $\Delta_x > 0$ units upward costs the controller $\Delta_x$ while its reduction of the future running cost is approximately $\int_0^\infty e^{-qt} (\Delta_x \alpha) \diff t = \Delta_x \alpha/ q$, which is less than or equal to $\Delta_x$ when $q \geq \alpha$; hence  the controller has no motivation to get involved.

\begin{figure}[htbp]
\begin{center}
\begin{minipage}{1.0\textwidth}
\centering
\begin{tabular}{cc}
 \includegraphics[scale=0.58]{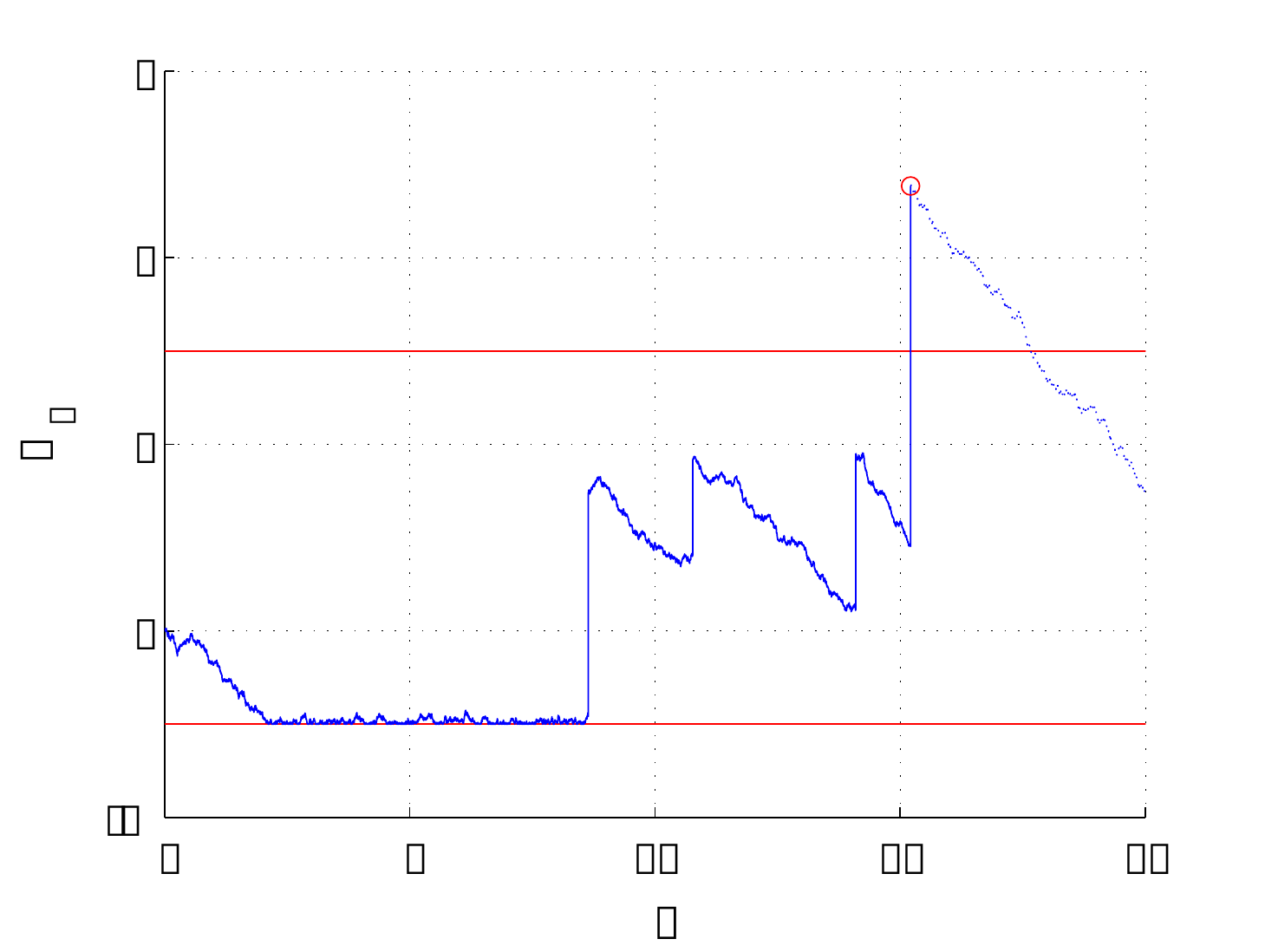} & \includegraphics[scale=0.58]{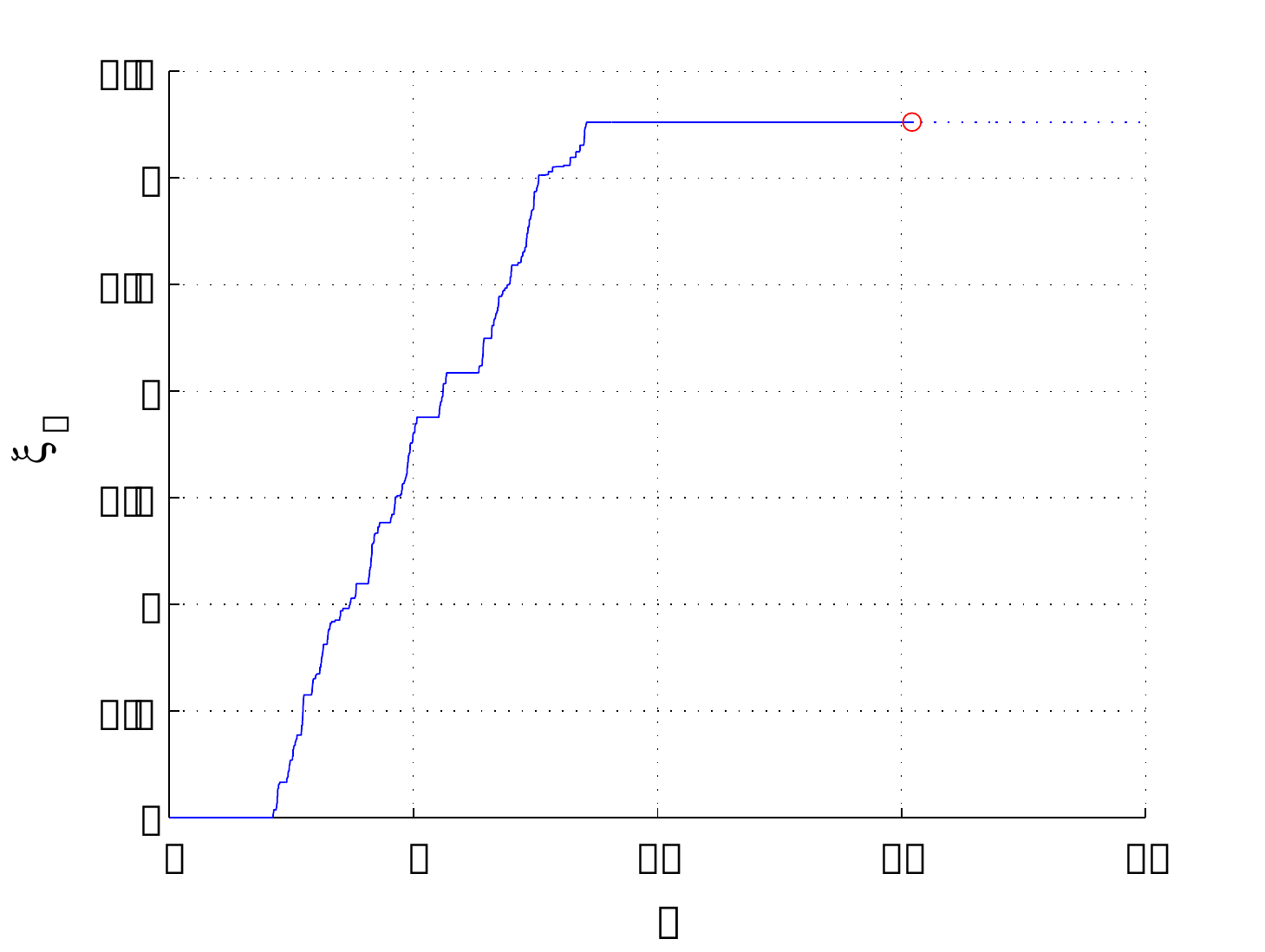}  \\
$U^{\xi^a}$ & $\xi^a$ \\
\end{tabular}
\end{minipage}
\caption{Sample realizations of $U^{\xi^a}$ and $\xi^a$ when $X$ is a spectrally positive \lev process. The circles indicate the points at the stopping time $\tau_{a,b}$.}  \label{sample_path_pos}
\end{center}
\end{figure}

\subsection{Write $J_{a,b}$ using scale functions}
Similarly to the spectrally negative \lev case, we shall show the optimality of a set of strategies $(\xi^a, \tau_{a,b})$ defined as in \eqref{the_strategies} for an appropriate choice of $a$ and $b$.

Figure \ref{sample_path_pos} shows sample paths of the controlled process $U^{\xi^a}$ and its corresponding control process $\xi^a$ when $X$ is a spectrally positive \lev process.  The controlled process is reflected at the lower boundary $a=-1$ and is stopped at the first time it \emph{reaches or exceeds} the upper boundary $b=3$.  Due to the lack of negative jumps, the process necessarily creeps downward at $a$ (assuming it starts at a higher point); hence $\xi^a$ increases only continuously.  On the other hand, due to the positive jumps, the process $U^{\xi^a}$ can jump over $b$ at the stopping time $\tau_{a,b}$.

As in the spectrally negative \lev case (see  Section \ref{subsection_J_scale_function_spectrally_negative}), $J_{a,b}$ as in \eqref{J_a_b_def} can be rewritten in terms of the scale function, using again the results by \cite{Pistorius_2004} and \cite{Avram_et_al_2007}.
Define
\begin{align} \label{gamma_def}
\begin{split}
\Gamma(x,b) &:= 1+ {W^{(q)}(0)}  h(x) + \int_x^b h(y) W^{(q)'}(y-x)  \diff y \\ &- [(C+bK) q  + K \hat{\mu} ]{W^{(q)}(b-x)} + K Z^{(q)}(b-x),  \quad x \leq b, \\
\overline{\Gamma}(x,b) &:= x - b - \int_x^b h(y) W^{(q)}(y-x)  \diff y \\ &+ (C+bK) Z^{(q)}(b-x)-K \overline{Z}^{(q)}(b-x) + K \hat{\mu} \overline{W}^{(q)}(b-x), \quad b,x \in \R.
\end{split}
\end{align}
In particular, if $x \geq b$, $\overline{\Gamma}(x,b) = x(1+K) + C -b$.
Because $\overline{\Gamma}(b,b) = C + bK$ and
\begin{align*}
\frac \partial {\partial x}\overline{\Gamma}(x,b) &=   \Gamma(x,b),
\end{align*}
we have
\begin{align*}
\overline{\Gamma}(x,b) = C+bK- \int_x^b \Gamma(y,b) \diff y, \quad x \leq b.
\end{align*}
\begin{lemma}
Fix any $a< b$.  For every $x \in \R$,
\begin{align}  \label{J_a_b_spec_pos}
\begin{split}
J_{a,b}(x)
&= \left\{ \begin{array}{ll}  \frac {W^{(q)}(b-a)} {W^{(q)'}(b-a)} \Gamma(a,b) + \overline{\Gamma}(a,b) - x + b, & x \leq a, \\ \frac {W^{(q)}(b-x)} {W^{(q)'}(b-a)} \Gamma(a,b) + \overline{\Gamma}(x,b) - x + b, & a < x.  \end{array} \right.
\end{split}
\end{align}
\end{lemma}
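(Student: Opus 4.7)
The plan is to treat the three qualitatively different regimes for the initial point $x$ separately, and to reduce the main case $a<x<b$ to a collection of known fluctuation identities for the reflected spectrally positive \lev process (obtained by duality from the spectrally negative identities of Pistorius \cite{Pistorius_2004} and Avram et al.\ \cite{Avram_et_al_2007}). The boundary cases are straightforward: for $x\ge b$ we have $\xi^a_0=0$ and $\tau_{a,b}=0$, so $J_{a,b}(x)=g(x)=C+Kx$; the stated formula then reduces to this because $W^{(q)}(b-x)=0$ and, by the definition following \eqref{gamma_def}, $\overline{\Gamma}(x,b)=x(1+K)+C-b$. For $x\le a$ the controller instantaneously pushes the process up to $a$, paying $a-x$, after which the dynamics restart from $a$; this gives $J_{a,b}(x)=(a-x)+J_{a,b}(a)$, and substituting $x=a$ into the candidate formula for the region $a<x$ and adding $a-x$ recovers exactly the first line.

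For the main case $a<x<b$, I would split the performance criterion \eqref{J_a_b_def} into four pieces using $g(y)=C+Ky$: the discounted running cost $R(x):=\E_x\bracks{\int_0^{\tau_{a,b}}e^{-qt}h(U_t^{\xi^a})\diff t}$, the controlling cost $L(x):=\E_x\bracks{\int_{[0,\tau_{a,b}]}e^{-qt}\diff \xi_t^a}$, the constant part $C\cdot P(x)$ with $P(x):=\E_x[e^{-q\tau_{a,b}}\mathbbm{1}_{\{\tau_{a,b}<\infty\}}]$, and the linear part $K\cdot Q(x)$ with $Q(x):=\E_x[e^{-q\tau_{a,b}}U_{\tau_{a,b}}^{\xi^a}\mathbbm{1}_{\{\tau_{a,b}<\infty\}}]$. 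Each of these four objects is a standard fluctuation identity for the process $U^{\xi^a}$, which is $X$ reflected at $a$ from below: writing $Y=-X$ (spectrally negative), $U^{\xi^a}$ is (up to sign) the spectrally negative process reflected at its running supremum, and first up-crossing of $b$ by $U^{\xi^a}$ is first down-crossing of $-b$ by the reflected $Y$. This puts us exactly in the framework of \cite{Pistorius_2004} and \cite{Avram_et_al_2007}.

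From those fluctuation identities, I expect: $P(x)$ to be a combination of $Z^{(q)}(b-x)$ and $W^{(q)}(b-x)/W^{(q)'}(b-a)$; $L(x)$, which is the $q$-discounted local time at the reflecting barrier $a$ up to $\tau_{a,b}$, to be proportional to $W^{(q)}(b-x)/W^{(q)'}(b-a)$; $R(x)$ to decompose as $-\int_x^b h(y)W^{(q)}(y-x)\diff y$ plus a ``boundary'' term proportional to $W^{(q)}(b-x)/W^{(q)'}(b-a)$ coming from the potential of the two-sided exit problem; and $Q(x)$, which is where the overshoot at $\tau_{a,b}$ enters, to produce the combination $bZ^{(q)}(b-x)-\overline{Z}^{(q)}(b-x)+\hat{\mu}\,\overline{W}^{(q)}(b-x)$ plus another $W^{(q)}(b-x)/W^{(q)'}(b-a)$ contribution. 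Summing the four terms, every contribution that is independent of $x$ except through the factor $W^{(q)}(b-x)$ collects into $\Gamma(a,b)\cdot W^{(q)}(b-x)/W^{(q)'}(b-a)$, and the remaining $x$-dependent terms group into precisely $\overline{\Gamma}(x,b)-x+b$, by comparison with \eqref{gamma_def}.

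The main obstacle is the term $Q(x)$, because the positive jumps of $X$ imply $U^{\xi^a}_{\tau_{a,b}}$ is generally strictly larger than $b$, and the expected overshoot must be accounted for; this is the origin of the $K\hat{\mu}\,\overline{W}^{(q)}(b-x)$ term in $\overline{\Gamma}$. The cleanest route is to compute $Q(x)$ via the identity $Q(x)=b\,P(x)+\E_x[e^{-q\tau_{a,b}}(U^{\xi^a}_{\tau_{a,b}}-b)\mathbbm{1}_{\{\tau_{a,b}<\infty\}}]$ and to evaluate the overshoot expectation either by differentiating a Laplace transform in a spatial parameter or by invoking the explicit formula that follows from the resolvent density of the reflected process. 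All other ingredients are direct applications of known scale-function identities, and the final step is a bookkeeping exercise that matches the collected coefficients with the definitions of $\Gamma$ and $\overline{\Gamma}$ in \eqref{gamma_def}.
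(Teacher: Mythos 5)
Your plan is correct and follows essentially the same route as the paper: the paper also splits $J_{a,b}$ into the running cost, the local-time/controlling cost, the factor $\E_x[e^{-q\tau_{a,b}}]$, and the discounted overshoot $\E_x[e^{-q\tau_{a,b}}(U^{\xi^a}_{\tau_{a,b}}-b)]$, pulling each of these from Pistorius (Theorem~1(ii), Proposition~2(ii)) and Avram et al.\ (Propositions~1 and~2), then regroups using $g(y)=C+Ky$ into $\Gamma$ and $\overline{\Gamma}$; your $(R,L,CP,KQ)$ decomposition with $Q(x)=bP(x)+\E_x[e^{-q\tau_{a,b}}(U^{\xi^a}_{\tau_{a,b}}-b)]$ is precisely that computation written as $(C+bK)P(x)$ plus the overshoot term. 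The boundary cases $x\ge b$ and $x\le a$ are handled in the paper exactly as you describe.
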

\begin{proof}
Fix $a \leq x \leq b$. First, by Theorem 1(ii) of \cite{Pistorius_2004},
\begin{align*}
&\E_x \Big[ \int_0^{\tau_{a,b}} e^{-qt}  h (U_{t}^{\xi^a}) \diff t   \Big] = \frac {W^{(q)}(b-x) W^{(q)}(0)} {W^{(q)'}(b-a)}   h(a) \\ &\qquad + \int_a^b h(y) \left[ W^{(q)}(b-x) \frac {W^{(q)'} (y-a)} {W^{(q)'} (b-a)} - W^{(q)}(y-x) \right] \diff y \\
&\qquad = \frac {W^{(q)}(b-x)} {W^{(q)'}(b-a)}  \left[  W^{(q)}(0) h(a) + \int_a^b h(y) W^{(q)'}(y-a)  \diff y \right] - \int_x^b h(y)  W^{(q)}(y-x) \diff y.
\end{align*}
Second, as in Proposition 1 of \cite{Avram_et_al_2007},
\begin{align*}
\E_x \Big[ \int_{[0,\tau_{a,b}]} e^{-qt} \diff \xi^a_{t} \Big] = \frac {W^{(q)}(b-x)} {W^{(q)'}(b-a)}.
\end{align*}
Finally, by Proposition 2(ii) of \cite{Pistorius_2004},
\begin{align*}
\E_x [ e^{- q \tau_{a,b}} ] =Z^{(q)}(b-x) - q W^{(q)}(b-a)  \frac {W^{(q)}(b-x)} {W^{(q)'}(b-a)},
\end{align*}
and, by Proposition 2 of \cite{Avram_et_al_2007},
\begin{align*}
\E_x [ e^{- q \tau_{a,b}} (U_{\tau_{a,b}}^{\xi^a}-b) ] =-\overline{Z}^{(q)}(b-x) + \hat{\mu} \overline{W}^{(q)}(b-x) + \frac{Z^{(q)}(b-a)-\hat{\mu} W^{(q)}(b-a)} {W^{(q)'}(b-a)} W^{(q)}(b-x).
\end{align*}
Summing up these terms and using the particular form of $g$,
\begin{align*}
\begin{split}
J_{a,b}(x) &=   \frac {W^{(q)}(b-x)} {W^{(q)'}(b-a)}  \left[  W^{(q)}(0) h(a) + \int_a^b h(y) W^{(q)'}(y-a)  \diff y \right] - \int_x^b h(y)  W^{(q)}(y-x) \diff y \\
&+ \frac {W^{(q)}(b-x)} {W^{(q)'}(b-a)} + (C+bK) Z^{(q)}(b-x) - (C+bK) q W^{(q)}(b-a)  \frac {W^{(q)}(b-x)} {W^{(q)'}(b-a)} \\
&-K \overline{Z}^{(q)}(b-x) + K \hat{\mu} \overline{W}^{(q)}(b-x) + K \frac{Z^{(q)}(b-a)-\hat{\mu} W^{(q)}(b-a)} {W^{(q)'}(b-a)} W^{(q)}(b-x) \\
&= \frac {W^{(q)}(b-x)} {W^{(q)'}(b-a)} \Gamma(a,b) + \overline{\Gamma}(x,b)  - x + b.
\end{split}
\end{align*}

For $x > b$, we have $J_{a,b}(x) = g(x) = \frac {W^{(q)}(b-x)} {W^{(q)'}(b-a)} \Gamma(a,b) + \overline{\Gamma}(x,b) - x + b$.  For $x < a$, $J_{a,b}(x) = (a-x) + J_{a,b}(a)$, as desired.
\end{proof}

\begin{remark}Integration by parts gives
\begin{align*}
\int_a^b h(y) W^{(q)'}(y-a)  \diff y= W^{(q)}(b-a) h(b) - W^{(q)}(0) h(a) - \int_a^b W^{(q)}(y-a) (-\alpha) \diff y.
\end{align*}
Hence
\begin{align} \label{Gamma_rewrite}
\begin{split}
\Gamma(a,b) &= 1 + W^{(q)}(b-a) [h(b) - ((C+bK) q    + K \hat{\mu})]\\ &- \int_a^b W^{(q)}(y-a) (-\alpha) \diff y + K Z^{(q)}(b-a) \\
&= 1 +K + W^{(q)}(b-a) \hat{h}(b)  + \hat{\alpha} \overline{W}^{(q)}(b-a).
\end{split}
\end{align}
\end{remark}

\subsection{Continuous/smooth fit}  Similarly to the spectrally negative \lev case, we choose $a$ and $b$ so that the function $J_{a,b}$ is continuous and smooth.  Define the derivative of $\Gamma(\cdot, \cdot)$ with respect to the first argument,
\begin{align} \label{gamma_rewrite}
\gamma(a,b) := \frac \partial {\partial a} \Gamma(a,b) = -W^{(q)'}(b-a) \hat{h}(b) - \hat{\alpha} W^{(q)}(b-a),  \quad a < b,
\end{align}
where the second equality holds by \eqref{Gamma_rewrite}.  This subsection shows the following.
\begin{proposition} \label{prop_smoothness_spec_pos}
\begin{enumerate}
\item If $b \geq a \geq -\infty$ are such that $\Gamma(a,b)/W^{(q)'}(b-a) = 0$, then $J_{a,b}$ is continuous (resp.\ differentiable) at $b$ when $X$ is of bounded (resp.\ unbounded) variation.
\item If in addition $a > -\infty$ and $\gamma(a,b) = 0$, then  $J_{a,b}$ is twice-differentiable at $a$.
\end{enumerate}
\end{proposition}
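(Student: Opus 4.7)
The plan is to prove both parts by direct calculation on the explicit formula \eqref{J_a_b_spec_pos}, exploiting the two identities $\partial_x\overline{\Gamma}(x,b) = \Gamma(x,b)$ (which follows from \eqref{gamma_def}) and $\partial_a\Gamma(a,b) = \gamma(a,b)$ (immediate from the compact form \eqref{Gamma_rewrite}), together with the asymptotic values $W^{(q)}(0)$ and $W^{(q)'}(0^+)$ from \eqref{eq:Wq0}--\eqref{eq:Wqp0}. A key preliminary observation is that $\Gamma(b,b) = 1 + K + W^{(q)}(0)\,\hat{h}(b)$ (since $Z^{(q)}(0) = 1$ and $h(b) - [(C+bK)q + K\hat{\mu}] = \hat{h}(b)$) and $\overline{\Gamma}(b,b) = C + bK$.

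\textbf{Part (1).} I would first send $x \uparrow b$ in the second branch of \eqref{J_a_b_spec_pos}, which yields $J_{a,b}(b^-) = W^{(q)}(0)\,\Gamma(a,b)/W^{(q)'}(b-a) + C + bK$, whereas $J_{a,b}(b^+) = g(b) = C + bK$. In the bounded variation case $W^{(q)}(0) = 1/\delta > 0$, so the hypothesis $\Gamma(a,b)/W^{(q)'}(b-a) = 0$ is exactly what is needed for continuity. In the unbounded variation case $W^{(q)}(0) = 0$, so continuity is automatic and I must upgrade to differentiability: differentiating \eqref{J_a_b_spec_pos} gives
\begin{align*}
J_{a,b}'(x) = -\frac{W^{(q)'}(b-x)}{W^{(q)'}(b-a)}\Gamma(a,b) + \Gamma(x,b) - 1, \quad a < x < b.
\end{align*}
Since $b > a$ finite makes $W^{(q)'}(b-a)$ positive and finite, the hypothesis is equivalent to $\Gamma(a,b) = 0$, which makes the first term vanish identically in $x$. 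Sending $x \uparrow b$ and using the preliminary observation $\Gamma(b,b) - 1 = K + W^{(q)}(0)\hat{h}(b) = K$ in the unbounded variation case gives $J_{a,b}'(b^-) = K = g'(b) = J_{a,b}'(b^+)$.

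\textbf{Part (2).} Continuity and first-order differentiability at $a$ are free: the first branch of \eqref{J_a_b_spec_pos} is affine in $x$ with slope $-1$, and letting $x \downarrow a$ in the displayed derivative gives $J_{a,b}'(a^+) = -\Gamma(a,b) + \Gamma(a,b) - 1 = -1$. For the second derivative I use the hypothesis $\Gamma(a,b)/W^{(q)'}(b-a) = 0$ from (1), which under $a > -\infty$ (so that $W^{(q)'}(b-a)$ is finite and positive) reduces to $\Gamma(a,b) = 0$; then the displayed derivative simplifies to $J_{a,b}'(x) = \Gamma(x,b) - 1$, and differentiating once more gives $J_{a,b}''(x) = \gamma(x,b)$ for $a < x < b$. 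Sending $x \downarrow a$ and invoking $\gamma(a,b) = 0$ yields $J_{a,b}''(a^+) = 0 = J_{a,b}''(a^-)$, as required. Note that because $\Gamma(a,b) = 0$ kills the term involving $W^{(q)''}$ before it appears, no extra smoothness of $W^{(q)}$ beyond the standing assumption is needed.

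\textbf{Main obstacle.} The calculations are essentially bookkeeping; the only genuine subtlety is that in the unbounded variation case one may have $W^{(q)'}(0^+) = +\infty$ (when $\sigma = 0$ and $\nu$ is infinite), so the naive limit $\lim_{x \uparrow b} W^{(q)'}(b-x) \Gamma(a,b)/W^{(q)'}(b-a)$ is of the form $\infty \cdot 0$. Formulating the hypothesis as $\Gamma(a,b)/W^{(q)'}(b-a) = 0$ sidesteps this difficulty because, once $W^{(q)'}(b-a)$ is known to be finite, one may cancel it and deduce $\Gamma(a,b) = 0$ outright, so the offending product vanishes pointwise in $x$ before any limit is taken.
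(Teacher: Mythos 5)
Your proof is correct and follows essentially the same route as the paper: differentiate the scale-function representation of $J_{a,b}$ and evaluate one-sided limits at $a$ and $b$ using $W^{(q)}(0)$, $W^{(q)'}(0^+)$ and $\Gamma(b,b)$. The one small refinement over the paper's own writing is that you substitute $\Gamma(a,b)=0$ (valid for finite $a$) \emph{before} differentiating a second time, so that $W^{(q)''}$ never appears and no regularity of $W^{(q)}$ beyond the standing differentiability assumption is invoked, whereas the paper's intermediate display for $J_{a,b}''(x^+)$ carries a $W^{(q)''}((b-x)^-)$ term that it only afterwards discards by the same hypothesis.
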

Here we write this condition as a division respect to $W^{(q)'}(b-a)$ (instead of simply writing $\Gamma(a,b) = 0$) to cope with the case $a = -\infty$; we will see below that with the choice $b =\bar{b}$ defined below, $\Gamma(a,\bar{b})$ converges as $a \downarrow -\infty$ to a finite constant and hence  $\lim_{a \downarrow -\infty}\Gamma(a,\bar{b})/W^{(q)'}(\bar{b}-a) = 0$.

The proof of Proposition \ref{prop_smoothness_spec_pos} is again achieved by straightforward differentiation and asymptotic behavior of the scale function near zero. Taking derivatives in  \eqref{J_a_b_spec_pos},
\begin{align} \label{J_a_b_derivative_spec_pos}
\begin{split}
J_{a,b}'(x)
&= \left\{ \begin{array}{ll} -1 & x < a, \\ -\frac {W^{(q)'}(b-x)} {W^{(q)'}(b-a)} \Gamma(a,b) + \Gamma(x,b)-1, & a < x < b, \\ K, & x > b,\end{array} \right. \\
J_{a,b}''(x^+)
&= \left\{ \begin{array}{ll} 0, & x < a, \\ \frac {W^{(q)''}((b-x)^-)} {W^{(q)'}(b-a)} \Gamma(a,b) + \gamma(x,b), & a < x < b, \\ 0, & x > b. \end{array} \right. 
\end{split}
\end{align}
Sending $x \uparrow b$ in \eqref{J_a_b_spec_pos} and the first identity of \eqref{J_a_b_derivative_spec_pos},
\begin{align}  \label{J_a_b_at_b}
\begin{split}
J_{a,b}(b^-)
&= \frac {W^{(q)}(0)} {W^{(q)'}(b-a)} \Gamma(a,b)  +C+bK, \\
J_{a,b}'(b^-)
&= -\frac {W^{(q)'}(0^+)} {W^{(q)'}(b-a)} \Gamma(a,b) + \Gamma(b,b)-1.
\end{split}
\end{align}
Here we have that by \eqref{Gamma_rewrite}
\begin{align}
\Gamma(b,b) &= 1+K+ {W^{(q)}(0)} \hat{h}(b), \quad b \in \R, \label{Gamma_at_b}
\end{align}
which is $1+K$ if and only if $X$ is of unbounded variation or $\hat{h}(b) = 0$ by Remark \ref{remark_smoothness_zero}(2).  Therefore, if $\Gamma(a,b)/W^{(q)'}(b-a) = 0$, then $J_{a,b}$ is continuous (resp.\ differentiable) at $b$ when $X$ is of bounded (resp.\ unbounded) variation.

Suppose  $a > -\infty$ and $\Gamma(a,b)/W^{(q)'}(b-a) = 0$. By sending $x \downarrow a$ in \eqref{J_a_b_derivative_spec_pos},
\begin{align}
J_{a,b}'(a^+)
= -1 = J_{a,b}'(a^-). \label{fit_at_a_SP}
\end{align}
We also have $J_{a,b}''(a^+) = \gamma(a,b)$,
 which equals $J_{a,b}''(a^-)=0$ if and only if $\gamma(a,b) = 0$.  This completes the proof of Proposition \ref{prop_smoothness_spec_pos}.

\subsection{Existence of $(a^*,b^*)$}  We shall pursue $(a^*, b^*)$ such that $\Gamma(a^*, b^*)/W^{(q)'}(b^*-a^*) = \gamma(a^*, b^*)=0$;  we obtain $(a^*, b^*)$ such that $\Gamma(a^*, b^*)/W^{(q)'}(b^*-a^*) = 0$ and $a^* := \arg \inf_{a \leq b^*} \Gamma(a, b^*)$.   Contrary to the spectrally negative \lev case, we shall see that $a^* = -\infty$ or $a^* = b^*$ can happen and in this case $\gamma(a^*, b^*) \neq 0$.

First, Assumption \ref{assump_h_g_SP} implies
 $\lim_{x \downarrow -\infty} \hat{h}(x) = \infty$ and $\lim_{x \uparrow \infty} \hat{h}(x) = -\infty$, and hence guarantees the existence and uniqueness of $-\infty < \underline{b} < \bar{b} < \infty$ such that
\begin{align}
\hat{h}(b)  >(\leq) 0, \quad b <(\geq) \underline{b}, \label{def_b_bar} \\
\Phi(q) \hat{h}(b) + \hat{\alpha} >(\leq)  0, \quad b <(\geq) \bar{b}. \label{def_b_bar_over}
\end{align}
With $\underline{b}$ and $\bar{b}$ as critical points, we shall prove the following.
\begin{proposition} \label{proposition_existence_SP}
\begin{enumerate}
\item  When $q \geq \alpha$, by setting $b^* = \bar{b}$ and  $a^* = -\infty$, we have $\Gamma(a^*, b^*)/W^{(q)'}(b^*-a^*) := \lim_{a \downarrow -\infty}\Gamma(a, b^*)/W^{(q)'}(b^*-a)  =0$ and $\Gamma(\cdot, b^*) \geq 0$ and $\gamma(\cdot, b^*) \geq 0$  on $(-\infty, b^*)$.
\item Suppose $q < \alpha$ and $X$ is of unbounded variation. Then, there exist $-\infty < a^* <b^* < \infty$ such that $\overline{b} > b^* > \underline{b}$ and $\Gamma(a^*,b^*) = \gamma(a^*,b^*) = 0$.
\item Suppose $q < \alpha$ and   $X$ is of bounded variation. Let $B$ be the unique value such that \begin{align}
\hat{h}(B) = -\delta {(1+K)},\label{about_B}
\end{align}
which exists by Assumption \ref{assump_h_g_SP}.

(i) If $(q + \nu(0, \infty)) (1+K) >\hat{\alpha}$ (including the case $\nu(0,\infty) = \infty$), there exist $-\infty < a^* <b^* < \infty$ such that $\underline{b} < b^* < B$ and $\Gamma(a^*,b^*) = \gamma(a^*,b^*) = 0$.

(ii) Otherwise, if we set $a^* = b^* = B$, then $\Gamma(a^*,b^*)=0$ and $\Gamma(\cdot,b^*) \geq 0$  and $\gamma(\cdot, b^*) \leq 0$ on $(-\infty, b^*)$.
\end{enumerate}
\end{proposition}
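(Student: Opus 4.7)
The backbone of the argument is a pair of explicit formulas together with an envelope identity. From \eqref{Gamma_rewrite} one reads $\Gamma(a,b)=1+K+W^{(q)}(b-a)\hat h(b)+\hat\alpha\,\overline W^{(q)}(b-a)$, while \eqref{gamma_rewrite} gives $\gamma(a,b)=W^{(q)}(b-a)\bigl[(-\hat h(b))F(b-a)-\hat\alpha\bigr]$, where $F(u):=W^{(q)'}(u)/W^{(q)}(u)$. By Remark~\ref{remark_smoothness_zero}(3), $F$ is strictly decreasing from $F(0^+)$ down to $\Phi(q)$, with $F(0^+)=\infty$ in the unbounded variation case and $F(0^+)=(q+\nu(0,\infty))/\delta$ in the bounded variation case. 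Hence the equation $\gamma(a,b)=0$ with $a<b$ admits a unique solution $a=a(b)$ precisely when $\hat h(b)<0$ and $\hat\alpha/(-\hat h(b))\in(\Phi(q),F(0^+))$. Because $\hat h'\equiv-\hat\alpha$, linearity of $h$ yields the envelope identity $\partial_b\Gamma(a,b)=W^{(q)'}(b-a)\hat h(b)$, so along the implicit curve $a=a(b)$ one has $\tfrac{d}{db}\Gamma(a(b),b)=W^{(q)'}(b-a(b))\hat h(b)$, strictly negative whenever $\hat h<0$.

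For case (1), I set $b^*=\bar b$ and $a^*=-\infty$. The asymptotics $W^{(q)}(u)/W^{(q)'}(u)\to 1/\Phi(q)$ and $\overline W^{(q)}(u)/W^{(q)'}(u)\to 1/\Phi(q)^2$ imply
\begin{equation*}
\lim_{a\downarrow-\infty}\frac{\Gamma(a,\bar b)}{W^{(q)'}(\bar b-a)}=\frac{\hat h(\bar b)}{\Phi(q)}+\frac{\hat\alpha}{\Phi(q)^2}=\frac{\Phi(q)\hat h(\bar b)+\hat\alpha}{\Phi(q)^2}=0
\end{equation*}
by the defining relation \eqref{def_b_bar_over} of $\bar b$. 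Using $\hat h(\bar b)=-\hat\alpha/\Phi(q)$ and $F(\cdot)\ge\Phi(q)$ gives $\gamma(a,\bar b)=W^{(q)}(\bar b-a)\hat\alpha[F(\bar b-a)/\Phi(q)-1]\ge 0$, so $\Gamma(\cdot,\bar b)$ is nondecreasing in $a$. To verify nonnegativity, I rewrite $\Gamma(a,\bar b)=1+K+\hat\alpha\bigl[\overline W^{(q)}(\bar b-a)-W^{(q)}(\bar b-a)/\Phi(q)\bigr]$ and use the identity from \eqref{laplace_in_terms_of_z} in the form $\overline W^{(q)}(u)-W^{(q)}(u)/\Phi(q)=(\E_u[e^{-qT_0^-}]-1)/q$ applied to the dual spectrally negative process; combined with $\E_u[e^{-qT_0^-}]\to 0$ this yields $\lim_{u\to\infty}\bigl[\overline W^{(q)}(u)-W^{(q)}(u)/\Phi(q)\bigr]=-1/q$. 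Consequently $\lim_{a\downarrow-\infty}\Gamma(a,\bar b)=1+K-\hat\alpha/q=(q-\alpha)/q\ge 0$ precisely in the regime $q\ge\alpha$.

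In cases (2) and (3)(i) I let $b$ range in $(\underline b,\bar b)$ and $(b_0,\bar b)$ respectively, where $b_0:=\underline b+\delta/(q+\nu(0,\infty))$ is the lower threshold for existence of the implicit $a(b)$ in the bounded variation subcase. Continuity of $F$ makes $a(b)$ continuous, with $a(b)\uparrow b$ at the left endpoint and $a(b)\downarrow-\infty$ as $b\uparrow\bar b$. By the envelope identity $b\mapsto\Gamma(a(b),b)$ is strictly decreasing, with boundary values $\Gamma(\underline b,\underline b)=1+K>0$ (case (2), using $W^{(q)}(0)=0$) and $\Gamma(b_0,b_0)=1+K-\hat\alpha/(q+\nu(0,\infty))>0$ (case (3)(i), using the hypothesis $(q+\nu(0,\infty))(1+K)>\hat\alpha$); at the upper endpoint the limit is $(q-\alpha)/q<0$ by the previous paragraph. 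The intermediate value theorem produces the desired $b^*$, and to obtain $b^*<B$ in case (3)(i) I observe that $a\mapsto\Gamma(a,B)$ has its unique minimum at $a(B)$, so $\Gamma(a(B),B)<\Gamma(B,B)=0$. Case (3)(ii) is verified directly: $\Gamma(B,B)=1+K+W^{(q)}(0)\hat h(B)=1+K-(1+K)=0$, while the hypothesis together with $F(u)\le F(0^+)$ gives $\gamma(a,B)\le 0$ on $(-\infty,B)$, so $\Gamma(\cdot,B)$ is nonincreasing and $\Gamma(a,B)\ge\Gamma(B,B)=0$. The main technical point is the limit $\lim_{u\to\infty}[\overline W^{(q)}(u)-W^{(q)}(u)/\Phi(q)]=-1/q$, which controls the sign of $\Gamma$ at the upper endpoint and thereby separates the regimes $q\ge\alpha$ and $q<\alpha$; without the probabilistic identity \eqref{laplace_in_terms_of_z} this would require substantially more delicate Tauberian analysis.
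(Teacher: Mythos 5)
Your proposal follows essentially the same route as the paper: you reduce to the rewritten $\Gamma$ in \eqref{Gamma_rewrite} and $\gamma$ in \eqref{gamma_rewrite}, you invoke the monotonicity of $W^{(q)'}/W^{(q)}$ and its limit $\Phi(q)$ from Remark~\ref{remark_smoothness_zero}(3), you compute $\lim_{a\downarrow-\infty}\Gamma(a,\bar b)=1-\alpha/q$ via \eqref{laplace_in_terms_of_z} exactly as in Lemma~\ref{lemma_equivalence_a_minus_infty}, and your threshold $b_0$ and the quantity $1+K-\hat\alpha/(q+\nu(0,\infty))$ are just the paper's $\underline B$ and $\Upsilon(B)$ in different clothing. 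The one genuinely additional element is the envelope packaging: you observe that along the $\gamma=0$ curve $a=a(b)$ the map $b\mapsto\Gamma(a(b),b)$ has derivative $\partial_b\Gamma=W^{(q)'}(b-a(b))\hat h(b)<0$, so it is \emph{strictly} decreasing. The paper instead takes $b^*=\min\{b:\Gamma(\bar a(b),b)=0\}$ after producing a single $\check b$ with $\Gamma(\bar a(\check b),\check b)<0$; your version additionally gives uniqueness of $b^*$ for free, at the small cost of needing $a(b)$ to be differentiable (implicit function theorem, using that $W^{(q)'}/W^{(q)}$ is strictly decreasing).

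One step is not justified as written. You assert that the upper-endpoint value of $b\mapsto\Gamma(a(b),b)$ is $(q-\alpha)/q$ ``by the previous paragraph,'' but that paragraph computes the iterated limit $\lim_{a\downarrow-\infty}\Gamma(a,\bar b)$ with $b$ frozen at $\bar b$, which is not the same as the diagonal limit $\lim_{b\uparrow\bar b}\Gamma(a(b),b)$ with $a(b)\to-\infty$ simultaneously. The diagonal limit does equal $(q-\alpha)/q$, but it needs a separate squeeze: for the upper bound, $\Gamma(a(b),b)=\min_a\Gamma(a,b)\leq\Gamma(A,b)\to\Gamma(A,\bar b)$ for any fixed $A$, then send $A\downarrow-\infty$; for the lower bound, use that $\Gamma(a,\cdot)$ is decreasing in $b$ for $b>\underline b$ by \eqref{derivative_Gamma_b} and $\Gamma(\cdot,\bar b)$ is increasing, so $\Gamma(a(b),b)\geq\Gamma(a(b),\bar b)\geq\Gamma(-\infty,\bar b)$. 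Alternatively you can avoid the limit entirely, as the paper does, by only needing a single $\check b<\bar b$ with $\Gamma(a(\check b),\check b)<0$, which is immediate from $\Gamma(a(\check b),\check b)\leq\Gamma(A,\check b)$ and the continuity of $\Gamma(A,\cdot)$. Also, in case (3)(i) your argument ``$\Gamma(a(B),B)<\Gamma(B,B)=0$'' presupposes $B<\bar b$ so that $a(B)$ is finite; this is not always true, but when $B\geq\bar b$ the desired inequality $b^*<B$ is trivial since $b^*<\bar b$, so a line distinguishing the two cases should be added.
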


\begin{remark} \label{remark_a_equals_b} Recall that it was necessarily that $a^* < b^*$ when $X$ is spectrally negative.  In view of this and the spectrally positive \lev case as obtained in Proposition \ref{proposition_existence_SP}(3), we see that the criterion for whether $a^* < b^*$ or $a^* = b^*$ is closely linked to how much the process moves (or fluctuates) upward in an infinitesimal time.
 Indeed, for the case of spectrally negative \lev process and spectrally positive \lev process of unbounded variation, the process at any point $x \in \R$ immediately enters the set $(x,\infty)$.  Otherwise,  $a^* < b^*$ holds if the frequency of jumps $\nu(0,\infty)$ is sufficiently large.

Intuitively speaking, $a^*$ is close to $b^*$ if the controller wants to terminate the game immediately. However,  when the process fluctuates upward in an infinitesimal time by itself, the controller would not have to push it completely to the stopping region (recall that the controller also wants to minimize the controlling cost).   On the other hand, for other cases as in (ii) of Proposition \ref{proposition_existence_SP}(3), the process can tend to move away from the stopping region without entering it, no matter how close it is to the stopping region; to avoid this, $a^*=b^*$ can happen for these cases.
\end{remark}

We now prove Proposition \ref{proposition_existence_SP}.  With $\underline{b}$ satisfying \eqref{def_b_bar}, we first consider the function  $\Gamma(\cdot,\underline{b}) : a \mapsto \Gamma(a, \underline{b})$ defined on $(-\infty, \underline{b}]$.
By \eqref{Gamma_at_b} and Assumption \ref{assumption_K},
\begin{align*}
\Gamma(\underline{b},\underline{b})  = 1+K > 0.
\end{align*}
By \eqref{gamma_rewrite} and Assumption \ref{assump_h_g_SP},
\begin{align*}
\gamma(a,\underline{b}) =   - \hat{\alpha} W^{(q)}(\underline{b}-a) < 0, \quad a < \underline{b}.
\end{align*}
This implies that $\Gamma(\cdot,\underline{b}) $ is uniformly bounded from below by $1+K > 0$ and hence never touches nor goes below zero.

Now consider increasing the value of $b$ starting from $\underline{b}$.   Dividing both sides of \eqref{gamma_rewrite} by $W^{(q)}(b-a)$,
\begin{align} \label{gamma_divided}
\frac {\gamma(a,b)} {W^{(q)}(b-a)}
&=  - \frac{W^{(q)'}(b-a)} {W^{(q)}(b-a)} \hat{h}(b) - \hat{\alpha}.
\end{align}
\begin{lemma} \label{assump_decreasing_increasing}For any fixed $b > \underline{b}$, there exists $\bar{a}(b) \in [-\infty, b]$ such that $\Gamma(a,b)$ is decreasing (resp.\ increasing) in $a$ on $(-\infty, \bar{a}(b))$ (resp.\ on $(\bar{a}(b), b)$).  Equivalently,  $\gamma(a,b)$ is negative (resp.\ positive) for $a < \bar{a}(b)$  (resp.\ $a > \bar{a}(b)$).
\end{lemma}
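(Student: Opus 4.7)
The plan is to rescale $\gamma(a,b)$ by the strictly positive quantity $W^{(q)}(b-a)$, show that the rescaled derivative is monotone nondecreasing in $a$ on $(-\infty,b)$, and then take $\bar{a}(b)$ to be its sign change. The monotonicity of $\Gamma(\cdot,b)$ will then follow immediately from $\partial_a \Gamma = \gamma$, see \eqref{gamma_rewrite}.

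The key input is identity \eqref{gamma_divided},
\[
\frac{\gamma(a,b)}{W^{(q)}(b-a)} = (-\hat{h}(b)) \, \frac{W^{(q)'}(b-a)}{W^{(q)}(b-a)} - \hat{\alpha}, \quad a < b,
\]
together with two observations: (i) $-\hat{h}(b) > 0$ whenever $b > \underline{b}$, by \eqref{def_b_bar} and the strict monotonicity of $\hat{h}$ under Assumption \ref{assump_h_g_SP}; (ii) by Remark \ref{remark_smoothness_zero}(3), $x \mapsto W^{(q)'}(x)/W^{(q)}(x)$ is nonincreasing on $(0,\infty)$. As $a$ increases on $(-\infty, b)$, the argument $b-a$ decreases toward $0$, so $W^{(q)'}(b-a)/W^{(q)}(b-a)$ is nondecreasing in $a$. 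Multiplying by the positive constant $-\hat{h}(b)$ and subtracting $\hat{\alpha}$ preserves this monotonicity, so $a \mapsto \gamma(a,b)/W^{(q)}(b-a)$ is nondecreasing on $(-\infty, b)$.

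With this monotonicity in hand, one simply defines
\[
\bar{a}(b) := \inf\{a < b : \gamma(a,b) \geq 0\} \in [-\infty, b],
\]
with the convention $\inf \emptyset := b$. Because $W^{(q)}(b-a) > 0$ strictly for $a < b$ (either $W^{(q)}(0)=1/\delta > 0$ in the bounded variation case, or $W^{(q)}(0)=0$ but $W^{(q)}$ is strictly increasing on $[0,\infty)$ in the unbounded variation case, by \eqref{eq:Wq0}), the sign of $\gamma(a,b)$ coincides with the sign of its rescaling, and the claimed sign pattern follows. The main obstacle is really the initial reformulation: once one divides by $W^{(q)}(b-a)$, the monotonicity becomes transparent through the standard log-concavity-type property of the $q$-scale function in Remark \ref{remark_smoothness_zero}(3). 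The boundary values $-\Phi(q)\hat{h}(b) - \hat{\alpha}$ as $a \downarrow -\infty$, and the limit as $a \uparrow b$ extracted from Remark \ref{remark_smoothness_zero}(2,3), are not required for the lemma itself but will be needed to decide whether $\bar{a}(b) \in (-\infty,b)$, $\bar{a}(b) = -\infty$, or $\bar{a}(b)=b$ in the subsequent proof of Proposition \ref{proposition_existence_SP}.
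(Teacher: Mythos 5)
Your proof is correct and follows essentially the same route as the paper's: divide $\gamma(a,b)$ by $W^{(q)}(b-a)$ to reach the identity \eqref{gamma_divided}, invoke the monotonicity of $W^{(q)'}/W^{(q)}$ from Remark \ref{remark_smoothness_zero}(3) together with $\hat{h}(b)<0$ for $b>\underline{b}$, and conclude that the rescaled quantity is nondecreasing in $a$, giving a unique sign change at $\bar{a}(b)$. The paper's proof is just a more compressed statement of this same argument.
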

\begin{proof} Recall, as in Remark \ref{remark_smoothness_zero}(3), that  $W^{(q)'}(y)/W^{(q)}(y)$ is decreasing.  In view of  \eqref{gamma_divided},
for $b > \underline{b}$ where $\hat{h}(b) < 0$,  there exists a unique value $\bar{a}(b)$ such that $\gamma(a,b)$ is negative (resp.\ positive)  for $a < \bar{a}(b)$ (resp.\ $a > \bar{a}(b)$).
\end{proof}
Because \eqref{gamma_divided} is monotonically increasing in $a$ (given $b > \underline{b}$), we can define by Remark \ref{remark_smoothness_zero}(2,3),
\begin{align*}
\lim_{a \downarrow -\infty}\frac {\gamma(a,b)} {W^{(q)}(b-a)}
&=  - \lim_{a \downarrow -\infty} \frac{W^{(q)'}(b-a)} {W^{(q)}(b-a)} \hat{h}(b) - \hat{\alpha} = -(\Phi(q) \hat{h}(b) + \hat{\alpha}).
\end{align*}
Recall the definition of $\bar{b}$ that satisfies \eqref{def_b_bar_over}. By \eqref{W_infty}, if $b > \bar{b}$ or equivalently $\Phi(q) \hat{h}(b) + \hat{\alpha} < 0$, $\Gamma(a, b) \xrightarrow{a \downarrow -\infty} -\infty$.  On the other hand, if  $b < \bar{b}$ or equivalently $\Phi(q) \hat{h}(b) + \hat{\alpha} > 0$, $\Gamma(a, b) \xrightarrow{a \downarrow -\infty} \infty$.  At the critical point $\bar{b}$ (which is larger than $\underline{b}$) at which $\Phi(q) \hat{h}(\bar{b}) + \hat{\alpha} = 0$, it is clear from the monotonicity of \eqref{gamma_divided} in $a$ that $\gamma(a,\bar{b}) \geq 0$ for any $a < \bar{b}$ and hence $\Gamma(a, \bar{b})$ decreases monotonically as $a \downarrow -\infty$.  In fact, the limit $\Gamma(-\infty, \bar{b}) := \lim_{a \downarrow -\infty}\Gamma(a, \bar{b})$ can be obtained explicitly as follows.
\begin{lemma} \label{lemma_equivalence_a_minus_infty}
We have $\Gamma(-\infty,\bar{b})
 =  1 -  \alpha / q$. Hence, $\Gamma(-\infty,\bar{b}) \geq 0$ if and only if $q \geq \alpha$.
\end{lemma}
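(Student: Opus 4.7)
The plan is to start from the compact form of $\Gamma$ in \eqref{Gamma_rewrite} and exploit the defining identity $\Phi(q)\hat h(\bar b)+\hat\alpha=0$ of $\bar b$ (see \eqref{def_b_bar_over}) to reduce everything to a single scale-function limit. Substituting $\hat\alpha=-\Phi(q)\hat h(\bar b)$ into \eqref{Gamma_rewrite} yields
\begin{equation*}
\Gamma(a,\bar b) \;=\; 1+K + \hat h(\bar b)\bigl[W^{(q)}(\bar b-a) - \Phi(q)\,\overline W^{(q)}(\bar b-a)\bigr],
\end{equation*}
so the task reduces to identifying $L:=\lim_{y\uparrow\infty}\bigl[W^{(q)}(y) - \Phi(q)\,\overline W^{(q)}(y)\bigr]$.

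For $L$, the idea is to invoke the third identity of \eqref{laplace_in_terms_of_z}, which together with $Z^{(q)}(x)=1+q\overline W^{(q)}(x)$ rearranges to
\begin{equation*}
\E_x\bigl[e^{-qT_0^-}\bigr] \;=\; 1 - \frac{q}{\Phi(q)}\bigl[W^{(q)}(x)-\Phi(q)\,\overline W^{(q)}(x)\bigr].
\end{equation*}
A standard truncation $e^{-qT_0^-}\leq e^{-qt}+\mathbf 1_{\{T_0^-\leq t\}}$, combined with $\p_x(T_0^-\leq t)=\p(\underline X_t\leq -x)\downarrow 0$ as $x\uparrow\infty$ for each fixed $t$ and then letting $t\uparrow\infty$, shows $\E_x[e^{-qT_0^-}]\to 0$. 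Solving the displayed equation for the bracketed term then gives $L=\Phi(q)/q$.

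Plugging this back and using $\hat h(\bar b)=-\hat\alpha/\Phi(q)$ together with $\hat\alpha=\alpha+Kq$ yields
\begin{equation*}
\Gamma(-\infty,\bar b) \;=\; 1+K + \hat h(\bar b)\cdot \frac{\Phi(q)}{q} \;=\; 1+K - \frac{\hat\alpha}{q} \;=\; 1 - \frac{\alpha}{q},
\end{equation*}
and the equivalence $\Gamma(-\infty,\bar b)\geq 0\iff q\geq\alpha$ is immediate since $q>0$. I do not foresee a serious obstacle here: the only mildly delicate point is the asymptotic $L=\Phi(q)/q$, but it follows cleanly from the vanishing of $\E_x[e^{-qT_0^-}]$ as $x\uparrow\infty$ combined with the explicit fluctuation identity.
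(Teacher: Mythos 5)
Your proof is correct and follows essentially the same route as the paper: both substitute the defining relation of $\bar b$ into \eqref{Gamma_rewrite}, reduce the problem to the limit of $W^{(q)}(y)-\Phi(q)\overline W^{(q)}(y)$, and extract that limit from the third identity in \eqref{laplace_in_terms_of_z} via $\E_x[e^{-qT_0^-}]\to 0$ as $x\uparrow\infty$ (the paper states this convergence without the truncation argument you supply, but it is the same key fact).
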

\begin{proof}
 By substituting the definition of $\bar{b}$ in \eqref{Gamma_rewrite},
\begin{align*}
\Gamma(a,\bar{b})
&= 1 +K    + \hat{\alpha} \Big( \overline{W}^{(q)}(b-a) -  \frac{W^{(q)}(b-a)} {\Phi(q)} \Big).
\end{align*}
Recall as in  \eqref{laplace_in_terms_of_z} that $\E_x \big[ e^{-q T_0^-} \big] = 1 + q \big( \overline{W}^{(q)}(x) -  W^{(q)}(x) / {\Phi(q)} \big) \xrightarrow{x \uparrow \infty} 0$
and hence $\overline{W}^{(q)}(x) -  W^{(q)}(x) / \Phi(q) \xrightarrow{x \uparrow \infty} - q^{-1}$. Therefore,
\begin{align*}
\Gamma(-\infty,\bar{b})
&= 1 +K    -  \frac {\hat{\alpha}} q =  1 - \frac \alpha q,
\end{align*}
as desired.
\end{proof}

If $q \geq \alpha$, we already have a desired pair of $(a^*, b^*)$ defined by $(-\infty, \bar{b})$; this concludes the proof of Proposition \ref{proposition_existence_SP}(1). We shall see in our later discussion that this corresponds to the case the controller never exercises.

Now suppose for the rest of this subsection that $q < \alpha$ and hence $\Gamma (-\infty, \bar{b}) < 0$.  By the continuity of $\Gamma(\cdot, \cdot)$, we can choose $\check{b} < \bar{b}$ such that $\Gamma(\bar{a}(\check{b}), \check{b}) < 0$.  This together with the fact that $\Gamma(\bar{a}(\underline{b}), \underline{b}) > 0$ shows that we can define the smallest value of $\underline{b} < b^* < \check{b}$ such that  $\Gamma(\cdot, b^*)$ first touches the $x$-axis, i.e.,
\begin{align*}
b^* := \min \{ b \geq \underline{b} : \Gamma(\bar{a}(b),b) = 0 \},
\end{align*}
and let $a^* = \bar{a}(b^*)$.  For any $\underline{b} < b < \bar{b}$, because  $\Gamma (-\infty, b) = \infty$ as discussed above, we have $\bar{a}(b) \in (-\infty, b]$, and hence we must have that $-\infty < a^* \leq b^*$.  Now there are two scenarios: $a^* < b^*$ or $a^* = b^*$.

We first consider the case $X$ is of unbounded variation. In this case, we must have $\Gamma(b^*,b^*) = 1+K > 0$ and hence $a^* < b^*$.  This necessarily means that $\Gamma(\cdot, b^*)$ attains a local minimum at $a^*$ and hence $\gamma(a^*, b^*) = 0$. These calculations conclude the proof of Proposition \ref{proposition_existence_SP}(2).

Now we assume that $X$ is of  bounded variation.  Unlike the unbounded variation case,
$\Gamma(b,b)$ now depends on $b$ and can attain negative values for large $b$.  Recall our definition of $B$ that satisfies \eqref{about_B}. Note that by Remark \ref{remark_smoothness_zero}(2), we can also write equivalently $\hat{h}(B) = -(1+K) /{W^{(q)}(0)}$.
Because $\Gamma(b,b) $ as in \eqref{Gamma_at_b} decreases to $-\infty$, we have $B > \underline{b}$ and
\begin{align}
\Gamma(b,b) > 0 \Longleftrightarrow b < B. \label{zero_large_B}
\end{align}


First suppose $\nu(0,\infty) < \infty$. For fixed $b \geq \underline{b}$, again \eqref{gamma_divided} is increasing in $a$ by Remark \ref{remark_smoothness_zero}(3) and hence there exists, by  Remark \ref{remark_smoothness_zero}(2),
\begin{align}
\Upsilon(b) := \lim_{a \uparrow b}\frac {\gamma(a,b)} {W^{(q)}(b-a)} = - \frac{W^{(q)'}(0^+)} {W^{(q)}(0)} \hat{h}(b) - \hat{\alpha} = -\frac {q +\nu(0, \infty)} {\delta}  \hat{h}(b) - \hat{\alpha}, \label{def_upsilon}
\end{align}
which is monotonically increasing in $b$ on $(\underline{b}, \infty)$.
Note that $\Upsilon(b) \leq 0$ if and only if $\gamma(\cdot,b) \leq 0$ uniformly on $(-\infty,b)$ if and only if $\Gamma(\cdot,b) $ is monotonically decreasing on $(-\infty, b)$.

\textbf{Case 1}: Suppose $\Upsilon(B) \leq 0$.   In this case, the monotonicity of $\Upsilon$ implies  $\Upsilon(b) \leq \Upsilon(B) \leq 0$ for $\underline{b} < b < B$; in other words, $\Gamma(\cdot,b) $ is monotonically decreasing on $(-\infty, b)$ for any $\underline{b} < b < B$.  Moreover, \eqref{zero_large_B}  implies that $\Gamma(b,b) > 0$ for $\underline{b} \leq b < B$.  Hence, $B$ is the smallest value of $b$ such that $\Gamma$ touches the $x$-axis; we must have $a^* = b^* = B$.  Notice here that $\Gamma(a^*,b^*)=0$ and $\Gamma(\cdot,b^*) \geq 0$  and $\gamma(\cdot, b^*) \leq 0$ on $(-\infty, b^*)$.  See Figure \ref{figure_Gammas}(iii) below for the shape of $\Gamma(\cdot, \cdot)$ and $\gamma(\cdot, \cdot)$ for this particular case.

\textbf{Case 2}:  Suppose $\Upsilon(B) > 0$.  Because $\Upsilon$ is monotone and continuous, there exists $\underline{B} \in (\underline{b},B)$ such that $\Upsilon(\underline{B}) = 0$.  For $\underline{b} \leq b \leq \underline{B}$, $\Gamma(\cdot, b)$ is monotonically decreasing.  Moreover, because $\underline{B} < B$,  $\Gamma(b, b) > 0$.  Hence we must have $b^* > \underline{B}$ and hence $\Upsilon(b^*) > 0$ or $\gamma(b^{*-}, b^*) > 0$, which shows $a^* \neq b^*$.
We also see that $b^* < B$ by $\Gamma(B,B) = 0$ and the definition of $b^*$.
%

We now show that \textbf{Cases 1} and \textbf{2} correspond to (ii) and (i), respectively, in Proposition \ref{proposition_existence_SP}(3). By substituting \eqref{about_B} in \eqref{def_upsilon},
\begin{align*}
\Upsilon(B) = (q + \nu(0, \infty)) (1+K) - \hat{\alpha}.\end{align*}
Hence for the case of bounded variation with $\nu(0,\infty) < \infty$,
\begin{align*}
a^* = b^*  \Longleftrightarrow \Upsilon (B) \leq 0 \Longleftrightarrow (q + \nu(0, \infty)) (1+K) \leq \hat{\alpha},
\end{align*}
which is confirmed to be mutually exclusive with $q \geq \alpha$, which is a necessary and sufficient condition for $a^* = -\infty$ (here $\nu(0,\infty) > 0$ because otherwise the process becomes a subordinator).

This result can be extended to the case $\nu(0,\infty) = \infty$.  Indeed, we have $\gamma(b^-, b) = \infty$ for any $b > \underline{b}$, and hence $a^* \neq b^*$ due to the same reason as in \textbf{Case 2} above.
Summarizing the results, Proposition \ref{proposition_existence_SP}(3) holds.
\subsection{Verification of optimality}  With $(a^*, b^*)$ as in Proposition \ref{proposition_existence_SP}(2,3), substituting   $\Gamma(a^*,b^*)  = 0$ in \eqref{J_a_b_spec_pos} gives
\begin{multline} \label{value_function_spec_pos}
J_{a^*,b^*}(x)   = \overline{\Gamma}(x; b^*) - x + b^*  =  - \int_x^{b^*} h(y) W^{(q)}(y-x)  \diff y + (C+b^*K) Z^{(q)}(b^*-x) \\ -K \overline{Z}^{(q)}(b^*-x) + K \hat{\mu} \overline{W}^{(q)}(b^*-x), \quad x \geq a^*.
\end{multline}
For $x < a^*$, $J_{a^*,b^*}(x)  = J_{a^*,b^*}(a^*) - x + a^*$.  Also, for $a^* < x < b^*$,
\begin{align} \label{J_star_derivatives_spec_pos}
\begin{split}
J_{a^*,b^*}'(x)
&=  \Gamma(x,b^*)-1, \\
J_{a^*,b^*}''(x)  &= \gamma(x,b^*).
\end{split}
\end{align}
As in Proposition \ref{proposition_existence_SP}(1), the same results hold for the case $a^* = -\infty$ with $J_{-\infty, b^*}(x) = \lim_{a \downarrow -\infty} J_{a, b^*}(x)$.  It can be easily shown that this corresponds to the value under the controller's strategy $\xi^{-\infty}_t := 0$ for all $t \geq 0$ and $\tau_{-\infty, b^*} := T^+_{b^*}$.

\begin{proposition} \label{convexisty_SP}
The function $J_{a^*,b^*}$ is convex. Hence,  $-1 \leq J_{a^*,b^*}'(x) \leq g'(x) = K$ and $J_{a^*,b^*}(x) \geq g(x)$ for all $x \in \R$.
\end{proposition}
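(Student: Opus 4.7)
The plan is to split $\R$ into the three natural regions cut out by the two barriers, verify convexity piece by piece, and then check that the slopes do not decrease across the junctions at $a^*$ and $b^*$. On $(-\infty,a^*]$ (when $a^*>-\infty$) the function is affine with slope $-1$ by its definition, and on $[b^*,\infty)$ it coincides with $g$ and hence is affine with slope $K$; both pieces are trivially convex, so everything reduces to the continuation interval $(a^*,b^*)$ and the two matching conditions.

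For the interior region I would use \eqref{J_star_derivatives_spec_pos}, which gives $J_{a^*,b^*}''(x)=\gamma(x,b^*)$ on $(a^*,b^*)$, and then read off the sign of $\gamma(\cdot,b^*)$ from the construction of Proposition \ref{proposition_existence_SP}: in cases (2) and (3)(i) one has $a^*=\bar a(b^*)$, so Lemma \ref{assump_decreasing_increasing} yields $\gamma(\cdot,b^*)\geq 0$ to the right of $\bar a(b^*)$; in case (1) the proposition itself asserts $\gamma(\cdot,b^*)\geq 0$ on $(-\infty,b^*)$; and in case (3)(ii) with $a^*=b^*$ the continuation interval is empty. Thus in every case $J_{a^*,b^*}''\geq 0$ on $(a^*,b^*)$.

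At the junction $a^*$, equation \eqref{fit_at_a_SP} (a direct consequence of $\Gamma(a^*,b^*)=0$) delivers $J_{a^*,b^*}'(a^{*+})=-1$, matching the slope on the left. At $b^*$, combining \eqref{J_a_b_derivative_spec_pos} with $\Gamma(a^*,b^*)=0$ and \eqref{Gamma_at_b} gives $J_{a^*,b^*}'(b^{*-})=K+W^{(q)}(0)\hat{h}(b^*)$, which is exactly $K$ in the unbounded-variation case and is strictly less than $K$ in the bounded-variation case because $b^*>\underline{b}$ forces $\hat{h}(b^*)<0$; either way the slope is nondecreasing across $b^*$. The degenerate case $a^*=b^*$ reduces to observing that the slope jumps from $-1$ to $K$, which is permissible by Assumption \ref{assumption_K}.

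Once convexity on $\R$ is in hand, the remaining bounds are immediate. Since $J_{a^*,b^*}'$ is nondecreasing and equals $K$ on $(b^*,\infty)$, one gets $J_{a^*,b^*}'\leq K$ everywhere; when $a^*$ is finite, the same monotonicity combined with $J_{a^*,b^*}'=-1$ on $(-\infty,a^*)$ yields $J_{a^*,b^*}'\geq -1$, and when $a^*=-\infty$ I would plug Lemma \ref{lemma_equivalence_a_minus_infty} together with the regime hypothesis $q\geq \alpha$ from Proposition \ref{proposition_existence_SP}(1) into $\lim_{x\downarrow-\infty}J_{a^*,b^*}'(x)=\Gamma(-\infty,b^*)-1=-\alpha/q$ to conclude the same. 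Finally, $J_{a^*,b^*}\geq g$ follows by integrating $J_{a^*,b^*}'-g'\leq 0$ from $x$ up to $b^*$ and using $J_{a^*,b^*}(b^*)=g(b^*)$. The main obstacle is the case bookkeeping, especially the bounded-variation subcase at $b^*$ where only continuous fit is available and the degenerate subcase $a^*=b^*$; past that, the argument is pure monotonicity and sign-reading of already-computed derivatives.
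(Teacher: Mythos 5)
Your argument is correct and follows essentially the same route as the paper: split into the three cases $a^*=b^*$, $a^*=-\infty$, and $-\infty<a^*<b^*$, read off $J_{a^*,b^*}''=\gamma(\cdot,b^*)\geq 0$ on the continuation interval from Proposition \ref{proposition_existence_SP} and Lemma \ref{assump_decreasing_increasing}, and use the smooth/continuous fit at the barriers together with Lemma \ref{lemma_equivalence_a_minus_infty} for the $a^*=-\infty$ lower bound. The derived bounds on $J_{a^*,b^*}'$ and the inequality $J_{a^*,b^*}\geq g$ then follow exactly as in the paper.
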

\begin{proof}
(i) The proof is immediate for $a^* = b^*$ because in this case it is continuous by Proposition \ref{prop_smoothness_spec_pos} and $J_{a^*,b^*}'(x) = -1$ on $(-\infty, b^*)$  while $J_{a^*,b^*}'(x) = K > -1$ on $(b^*, \infty)$.

(ii) When $a^* = -\infty$, by Proposition \ref{proposition_existence_SP}(1), $J_{-\infty, b^*}''(x) = \gamma(x, b^*) \geq 0$ on $(-\infty, b^*)$. Moreover, $J_{-\infty, b^*}'(b^{*-}) = \Gamma(b^*, b^*) - 1 = K+ {W^{(q)}(0)} \hat{h}(b^*) \leq K = J_{a^*, b^*}'(b^{*+})$ by $b^* > \underline{b}$, and it is continuous at $b^*$ by Proposition \ref{prop_smoothness_spec_pos}. Finally, for any $x \in (-\infty, b^*)$,
\begin{align*}
J_{-\infty, b^*}'(x) \geq  \lim_{x \downarrow -\infty} J_{-\infty, b^*}'(x)  = \Gamma(-\infty,b^*)-1 \geq -1,
\end{align*}
where the last bounds holds by Lemma \ref{lemma_equivalence_a_minus_infty}.

(iii) Suppose $-\infty < a^* < b^*$.  First, by Lemma \ref{assump_decreasing_increasing} noticing that $\bar{a}(b^*)=a^*$ in this case,
 $J_{a^*, b^*}''(x)  = \gamma(x, b^*) \geq 0$ on $(a^*, b^*)$.
 Second, the same reasoning as in (ii) shows $J_{a^*, b^*}'(b^{*-}) \leq J_{a^*, b^*}'(b^{*+})$ by $b^* > \underline{b}$. Finally, the smooth fit at $a^*$ and continuous fit at $b^*$ as in Proposition \ref{prop_smoothness_spec_pos} show the claim.
\end{proof}

We now state the main theorem for the spectrally positive \lev case.
\begin{theorem}\label{verification_SP}
Let $a^*$ and $b^*$ be as in Proposition \ref{proposition_existence_SP}, and define $v(x) := J_{a^*,b^*}(x)$ as the value of the  functional associated with the strategies
$\xi^{a^*}_t := \sup_{0 \leq t' \leq t} (a^*-X_{t'}) \vee 0$, $t \geq 0$, and $\tau_{a^*,b^*}:= \inf \{ t \geq 0: U_t^{\xi^{a^*}} > b^* \}$.
Then,  the pair $(\xi^{a^*}, \tau_{a^*,b^*})$ is a \emph{saddle point} and $v(\cdot)$ is the value function of the game.
\end{theorem}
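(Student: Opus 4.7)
The plan is to mirror the verification argument used for Theorem \ref{verification}, but adapted to the spectrally positive setting and to the three qualitatively distinct cases produced by Proposition \ref{proposition_existence_SP}: (i) $a^*=-\infty$, (ii) $-\infty<a^*<b^*$, and (iii) $a^*=b^*$. The central step is to establish a variational inequality for $v=J_{a^*,b^*}$, namely
\begin{align*}
(\mathcal{L}-q)v(x)+h(x)=0,\quad x\in(a^*,b^*); \qquad
(\mathcal{L}-q)v(x)+h(x)\le 0,\quad x>b^*; \qquad
(\mathcal{L}-q)v(x)+h(x)\ge 0,\quad x<a^*.
\end{align*}
For the first relation, I would invoke the known martingale properties of $Z^{(q)}(b^*-X_{\cdot\wedge T_{a^*}^-\wedge T_{b^*}^+})$ and $\overline{Z}^{(q)}(b^*-X_{\cdot\wedge T_{a^*}^-\wedge T_{b^*}^+})$ (Proposition~2 of \cite{Avram_et_al_2007}) so that $(\mathcal{L}-q)Z^{(q)}(b^*-\cdot)=(\mathcal{L}-q)\overline{Z}^{(q)}(b^*-\cdot)=0$ and $(\mathcal{L}-q)\int_{\cdot}^{b^*}h(y)W^{(q)}(y-\cdot)\ud y=-h$ on $(a^*,b^*)$, using the representation \eqref{value_function_spec_pos}. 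The smoothness required to apply the generator pointwise is provided by Proposition~\ref{prop_smoothness_spec_pos} (twice-differentiability at $a^*$, and continuity or differentiability at $b^*$).

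For the inequalities off $(a^*,b^*)$, I would use the fact that $v$ is affine on $(-\infty,a^*)$ with slope $-1$ and equals $g(x)=C+Kx$ on $[b^*,\infty)$, together with Proposition~\ref{convexisty_SP} ($v\ge g$, $v$ convex, $-1\le v'\le K$). On $(-\infty,a^*)$, differentiating $(\mathcal{L}-q)v+h$ in $x$ and using $v'(x+z)\ge -1$ for $z>0$ yields that this function is monotone; combined with the $C^2$-fit at $a^*$ (from $\gamma(a^*,b^*)=0$) and continuation at the boundary, one recovers the nonnegativity. On $(b^*,\infty)$, I would compare $v$ with $g$ through $\tilde J:=v-g$, which is nonnegative, nonincreasing, and vanishes on $[b^*,\infty)$, so that $\mathcal{L}\tilde J(y)-\mathcal{L}\tilde J(x)\le 0$ for $y>x>b^*$ by inspection of the integral against the positive-jump measure $\nu$ (this is analogous to Lemma~\ref{lemma_generator_b_plus}, but now it is the controller-side that is trivial and the stopper-side that needs the measure argument via the overshoot of $b^*$).

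Once the VI is in place, I would apply the Meyer-Itô formula to $e^{-qt}v(U_t^\xi)$ exactly as in \eqref{intbyparts}--\eqref{Decomp}. For an arbitrary controller $\xi$, using $(\mathcal{L}-q)v\ge -h$ on $(-\infty,b^*)$ and $v'(x)\ge -1$ (so that $v(A_s+\Delta\xi_s)-v(A_s)\ge -\Delta\xi_s$), localization by $\tau_n=\inf\{t:U_t^\xi<-n\}\wedge\tau_{b^*}^\xi$ and the optional sampling theorem give $v(x)\le \E_x[\int_0^{t\wedge\tau_n}e^{-qs}h\,\ud s+\int_{[0,t\wedge\tau_n]}e^{-qs}\ud\xi_s+e^{-q(t\wedge\tau_n)}v(U_{t\wedge\tau_n}^\xi)]$; then monotone/dominated convergence and Fatou's lemma, combined with Remark~\ref{Rem spec pos inf} and the overshoot control $v\ge g$, send $n,t\to\infty$ to yield $v(x)\le J(x;\xi,\tau_{b^*}^\xi)$. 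For the reverse inequality under $\xi^{a^*}$, note that $\xi^{a^*}$ is purely continuous (no negative jumps), $\Delta\xi_s^{a^*,c}$ contributes only when $U_{s^-}^{\xi^{a^*}}=a^*$ where $v'=-1$, and outside $[a^*,b^*]$ the VI gives $(\mathcal{L}-q)v+h\le 0$; proceeding symmetrically and using $v\ge g$ produces $v(x)\ge J(x;\xi^{a^*},\tau)$.

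The main obstacle will be handling the degenerate cases of Proposition~\ref{proposition_existence_SP}. When $a^*=-\infty$ (i.e.\ $q\ge\alpha$), the controller's action vanishes, so one only needs the stopper-side argument, but verifying $(\mathcal{L}-q)v+h\ge 0$ on $(-\infty,b^*)$ still requires care since $v'(-\infty)=-1$ by Lemma~\ref{lemma_equivalence_a_minus_infty}. When $a^*=b^*=B$ (bounded variation, case (ii) of Proposition~\ref{proposition_existence_SP}(3)), $v$ is only continuous at this common boundary and the $\gamma$-condition is replaced by $\gamma(\cdot,b^*)\le 0$; here one must argue directly that immediate intervention and stopping is optimal, using $\hat h(B)=-\delta(1+K)$ to balance the drift $\delta$ against the unit controlling cost, and verify the VI in a one-sided manner with the continuous-fit value $v(B^-)=v(B^+)=g(B)$. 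A further subtlety is that, because $X$ has positive jumps, the stopping time $\tau_{b^*}^\xi$ can involve an overshoot $U_{\tau_{b^*}^\xi}^\xi>b^*$, so the terminal comparison $v\ge g$ must be invoked on all of $[b^*,\infty)$ rather than only at $b^*$ as in the creeping spectrally negative case.
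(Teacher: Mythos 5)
Your overall architecture is right --- establish a variational inequality for $v=J_{a^*,b^*}$ and feed it into a Meyer--Itô/localization argument modelled on Theorem \ref{verification} --- and your observation that the bounded-variation continuous-fit at $b^*$ and the overshoot of $b^*$ require the inequality $v\ge g$ on all of $[b^*,\infty)$ are both sound. But you have misplaced where the difficulty of the one-sided jump structure lands, and the simple argument you propose for the region $(-\infty,a^*)$ has a genuine gap.

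For a \emph{spectrally positive} process, jumps from a point $x>b^*$ go further to the right, where $v=g$ is affine; hence $\mathcal{L}v$ is a constant there (namely $-K\hat\mu$), $(\mathcal{L}-q)v(x)+h(x)=\hat h(x)$, and the inequality on $(b^*,\infty)$ follows immediately from $\hat h$ decreasing and $b^*\ge\underline b$ (Proposition \ref{proof_harmonic_spec_pos}(2)). No measure argument à la Lemma \ref{lemma_generator_b_plus} is needed --- you have swapped the roles of the two sides relative to the spectrally negative case. Conversely, the \emph{controller side} $(-\infty,a^*)$ is where the jump measure does matter: positive jumps carry the process from below $a^*$ into $(a^*,\infty)$ where $v$ is \emph{not} affine, so $\mathcal{L}v$ is \emph{not} constant on $(-\infty,a^*)$. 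Explicitly, using $v'\equiv -1$ and $v''\equiv 0$ on $(-\infty,a^*)$,
\begin{align*}
\frac{\partial}{\partial x}\Bigl[(\mathcal{L}-q)v(x)+h(x)\Bigr]
 = \int_{(0,\infty)}\bigl[v'(x+z)+1\bigr]\nu(\diff z)\; +\; (q-\alpha), \quad x<a^*.
\end{align*}
The first term is nonnegative and the second is strictly negative in the nontrivial case $q<\alpha$; the sum has no definite sign, so the claimed monotonicity does not follow. (Compare with Proposition \ref{verification_1}(2) in the spectrally negative case, where the jump term vanishes identically on $(-\infty,a^*)$ because negative jumps stay in the affine region, and the derivative collapses to $\tilde h'\le 0$.) To close this gap the paper introduces, for each $x<a^*$, an auxiliary harmonic function $J_{x,b(x)}$ with $\Gamma(x,b(x))=0$ and compares generators: it shows $(\mathcal{L}-q)(J_{a^*,b^*}-J_{x,b(x)})(x^+)\ge 0$ by a term-by-term inequality (Proposition \ref{proof_subharmonic_spec_pos}), which in turn relies on the monotonicity $b(a)>b^*$ of the auxiliary stopping barrier and on $\gamma(\cdot,b(a))\le 0$ below $a$ (Lemmas \ref{lemma_monotonicity_b_a} and \ref{lemma_gamma_b_a_shape}). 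That machinery, which your proposal omits, is the genuinely new ingredient here; without it the controller-side inequality, and hence the saddle-point verification, is not established.
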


%
We shall show this by verifying the variational inequalities for the rest of this section.
First, the generator of $X$ is given by  \begin{align*}
\mathcal{L} f(x) &:= -c f'(x) + \frac 1 2 \sigma^2 f''(x) + \int_{(0,\infty)} \left[ f(x+z) - f(x) -  f'(x) z 1_{\{0 < z < 1\}} \right] \nu(\diff z).
\end{align*}
By Proposition \ref{convexisty_SP} and Assumption \ref{assump_finiteness_mu}, the integral part of $\mathcal{L}J_{a^*,b^*}$ is finite.  Hence, by the smoothness  by Proposition \ref{prop_smoothness_spec_pos} and \eqref{fit_at_a_SP}, $\mathcal{L} J_{a^*,b^*}$ is well defined everywhere on $\R \backslash \{b^*\}$.
\begin{proposition} \label{proof_harmonic_spec_pos}
(1) Suppose $a^* < b^*$.  For $a^* < x < b^*$, $(\mathcal{L}-q) J_{a^*,b^*}(x)  + h(x) = 0$.\\
(2) We have $(\mathcal{L}-q) J_{a^*,b^*}(x) + h(x)\leq 0$ for $x > b^*$.
\end{proposition}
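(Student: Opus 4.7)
My plan is to treat (1) and (2) separately, closely paralleling Proposition \ref{verification_1}. Part (1) will follow by applying $(\mathcal{L}-q)$ termwise to the closed form \eqref{value_function_spec_pos} of $v := J_{a^*,b^*}$ on $(a^*,b^*)$ and invoking the $q$-harmonicity of each scale-function building block, while part (2) reduces to a direct computation exploiting that $v=g$ is affine on $(b^*,\infty)$.

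For (1), I would rewrite \eqref{value_function_spec_pos} as
\[
v(x) = -\int_x^{b^*} h(y) W^{(q)}(y-x) \diff y + (C+b^*K) Z^{(q)}(b^*-x) - K\bigl[\overline{Z}^{(q)}(b^*-x) - \hat{\mu}\, \overline{W}^{(q)}(b^*-x)\bigr],
\]
and then appeal to three harmonicity identities valid on $(a^*,b^*)$: (i) $(\mathcal{L}-q) Z^{(q)}(b^*-x) = 0$, since $e^{-qt} Z^{(q)}(b^*-X_t)$ is a local martingale up to $T^+_{b^*}$ by Proposition 2(ii) of \cite{Pistorius_2004}; (ii) $(\mathcal{L}-q)[\overline{Z}^{(q)}(b^*-x) - \hat{\mu}\, \overline{W}^{(q)}(b^*-x)] = 0$, which follows from Proposition 2 of \cite{Avram_et_al_2007} (the expected discounted overshoot $-\overline{Z}^{(q)}(b-\cdot) + \hat{\mu}\, \overline{W}^{(q)}(b-\cdot) + \alpha_{a,b}\, W^{(q)}(b-\cdot)$ used in the first lemma of this section is $q$-harmonic, and $W^{(q)}(b-\cdot)$ is itself $q$-harmonic on $(-\infty,b)$, so the difference is as well); and (iii) $(\mathcal{L}-q)\int_x^{b^*} h(y) W^{(q)}(y-x)\, \diff y = h(x)$, obtained by inverting the Feynman--Kac representation of $\E_x[\int_0^{\tau_{a,b}} e^{-qt} h(U^{\xi^a}_t) \diff t]$ established in the first lemma of this section (alternatively via the direct computation in Lemma 4.5 of \cite{Egami-Yamazaki-2010-1}). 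The differentiability assumption on $W^{(q)}$ and Assumption \ref{assump_finiteness_mu} make the generator well defined pointwise on $(a^*,b^*)$. Summing the three contributions yields $(\mathcal{L}-q) v(x) = -h(x)$, which is (1).

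For (2), since $v=g$ on $(b^*,\infty)$ and $x+z > b^*$ for every jump $z>0$, we may replace $v$ by $g$ in the defining integral of $\mathcal{L}v(x)$. A direct calculation using $\psi'(0^+)=\hat{\mu}$ (i.e.\ $c-\int_{[1,\infty)} z\,\nu(\diff z) = \hat{\mu}$) gives $\mathcal{L} g(x) = -K\hat{\mu}$. Combined with $h(x) = -\alpha x + \beta$ and the definition of $\hat{h}$ in Assumption \ref{assump_h_g_SP}, this simplifies to
\[
(\mathcal{L}-q) v(x) + h(x) = -K\hat{\mu} - q(C+Kx) - \alpha x + \beta = \hat{h}(x), \qquad x > b^*.
\]
Since $\hat{h}$ is decreasing, it suffices to check $\hat{h}(b^*)\leq 0$, which follows case by case from Proposition \ref{proposition_existence_SP}: in cases (1), (2) and (3)(i) we have $b^* \geq \underline{b}$ and in fact $b^* > \underline{b}$ (so $\hat{h}(b^*) < 0$), while in case (3)(ii) we have $b^* = B$ with $\hat{h}(B) = -\delta(1+K) < 0$ by Assumption \ref{assumption_K}.

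The chief technical obstacle lies in identity (ii) of the first step. Unlike the spectrally negative case treated in Proposition \ref{verification_1}, where the ``pure'' building blocks $Z^{(q)}$ and $R^{(q)}=\overline{Z}^{(q)}+\mu/q$ are separately $q$-harmonic, in the spectrally positive setting one must combine $\overline{Z}^{(q)}$ with the overshoot correction $\hat{\mu}\, \overline{W}^{(q)}$ in order to recover a $q$-harmonic function. Isolating this combination and confirming that the resulting identity holds in the pointwise (not merely martingale-average) sense on $(a^*,b^*)$ is the main hurdle; once it is in place, the remainder is straightforward bookkeeping.
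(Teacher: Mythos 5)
Your proposal is correct and mirrors the paper's argument, which is extremely terse: for (1) the paper simply points back to \eqref{value_function_spec_pos} and the martingale (harmonicity) argument used in Proposition \ref{verification_1}, and for (2) it records $\mathcal{L}J_{a^*,b^*}(x)=K\,\E X_1=-K\hat{\mu}$ (valid because $X$ has only positive jumps and $J_{a^*,b^*}=g$ on $[b^*,\infty)$), concluding $(\mathcal{L}-q)J_{a^*,b^*}(x)+h(x)=\hat h(x)\le 0$ for $x>b^*\ge\underline b$. You fill in what the paper leaves implicit: the decomposition of \eqref{value_function_spec_pos} into the three scale-function building blocks, the observation that the overshoot identity of Proposition 2 of \cite{Avram_et_al_2007} together with $q$-harmonicity of $W^{(q)}(b^*-\cdot)$ forces $(\mathcal{L}-q)[\overline{Z}^{(q)}(b^*-\cdot)-\hat\mu\,\overline W^{(q)}(b^*-\cdot)]=0$, and the case-by-case check that $\hat h(b^*)\le 0$. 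These are the right gap-fillers; there is nothing to correct.
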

\begin{proof}
 (1) The result holds immediately in view of \eqref{value_function_spec_pos} as in the spectrally negative \lev case.\\
 (2) Because $X$ is spectrally positive,
\begin{align*}
\mathcal{L} J_{a^*,b^*}(x) &= K \E X_1 = - K \hat{\mu},
\end{align*}
and hence $(\mathcal{L}-q) J_{a^*,b^*}(x) + h(x) =  h(x) - [(C+xK) q    + K \hat{\mu}] = \hat{h}(x)$.  The result holds immediately by $x > b^* \geq \underline{b}$ and because $\hat{h}$ is decreasing.
\end{proof}


%

By differentiating \eqref{Gamma_rewrite},
\begin{align} \label{derivative_Gamma_b}
\frac \partial {\partial b} \Gamma(a,b)
&= W^{(q)'}(b-a) \hat{h}(b), \quad a < b,
\end{align}
which is positive (resp.\ negative) whenever $b < \underline{b}$ (resp.\ $b > \underline{b}$).
Namely, for fixed $a$, $\Gamma(a,b)$ increases (resp.\ decreases) monotonically in $b$ on $(a, \underline{b})$ (resp.\ $(\underline{b}, \infty)$).
Hence there exists $\lim_{b \uparrow \infty}\Gamma(a,b)$.

\begin{lemma} \label{Gamma_b_limit_spec_pos}
For all $a \in \R$, $\lim_{b \uparrow \infty}\Gamma(a,b) =-\infty$.
\end{lemma}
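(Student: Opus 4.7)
The plan is to exploit the explicit representation \eqref{Gamma_rewrite}:
\begin{align*}
\Gamma(a,b) = 1 + K + W^{(q)}(b-a)\,\hat h(b) + \hat\alpha\,\overline{W}^{(q)}(b-a),
\end{align*}
and to show that the first nontrivial term dominates and drives $\Gamma(a,b)$ to $-\infty$.

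First I would observe that $\hat h$ is affine with slope $-\hat\alpha<0$ by Assumption \ref{assump_h_g_SP}, so $\hat h(b)\to-\infty$ as $b\uparrow\infty$ at linear rate in $b$. Next, by \eqref{W_infty}, $W^{(q)}(b-a)\to\infty$, so the product $W^{(q)}(b-a)\,\hat h(b)$ tends to $-\infty$. The only worry is that the nonnegative correction $\hat\alpha\,\overline{W}^{(q)}(b-a)$ might cancel this out. To rule this out, I would use the sharp asymptotic comparison between $\overline{W}^{(q)}$ and $W^{(q)}$.

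The key estimate is that
\begin{align*}
\lim_{x\uparrow\infty}\frac{\overline{W}^{(q)}(x)}{W^{(q)}(x)}=\frac{1}{\Phi(q)},
\end{align*}
which follows cleanly from $W^{(q)}(x)=e^{\Phi(q)x}W_{\Phi(q)}(x)$ together with $W_{\Phi(q)}(x)\nearrow\psi'(\Phi(q))^{-1}$ as $x\uparrow\infty$ (Remark \ref{remark_smoothness_zero}(2)): one writes $\overline{W}^{(q)}(x)=\int_0^x e^{\Phi(q)y}W_{\Phi(q)}(y)\,\diff y$, and since $W_{\Phi(q)}$ is bounded and converges to $\psi'(\Phi(q))^{-1}$, a direct computation (or L'H\^opital, using $W^{(q)'}(x)/W^{(q)}(x)\to\Phi(q)$ from Remark \ref{remark_smoothness_zero}(3)) yields the claimed ratio.

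Given this, for any $\epsilon>0$ there exists $B_0$ (depending on $a$) such that for $b\geq B_0$,
\begin{align*}
\hat\alpha\,\overline{W}^{(q)}(b-a)\leq W^{(q)}(b-a)\left(\frac{\hat\alpha}{\Phi(q)}+\epsilon\right).
\end{align*}
Plugging into \eqref{Gamma_rewrite}:
\begin{align*}
\Gamma(a,b)\leq 1+K + W^{(q)}(b-a)\left[\hat h(b)+\frac{\hat\alpha}{\Phi(q)}+\epsilon\right].
\end{align*}
Since $\hat h(b)\to-\infty$, the bracket is eventually less than, say, $-1$, and then $W^{(q)}(b-a)\to\infty$ forces $\Gamma(a,b)\to-\infty$. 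The only delicate point is the limit $\overline{W}^{(q)}/W^{(q)}\to 1/\Phi(q)$; the crude bound $\overline{W}^{(q)}(x)\leq x\,W^{(q)}(x)$ is too weak because it produces a linear-in-$b$ term that exactly matches the linear growth of $|\hat h(b)|$. Beyond that, the argument is just bookkeeping.
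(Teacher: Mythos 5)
Your proof is correct and rests on exactly the same key ingredient as the paper's: the limit $\overline{W}^{(q)}(x)/W^{(q)}(x)\to\Phi(q)^{-1}$ (the paper cites Exercise 8.5 of \cite{Kyprianou_2006}, while you re-derive it), combined with $\hat h(b)\to-\infty$ and $W^{(q)}\to\infty$. The paper packages the bookkeeping slightly differently---dividing $\Gamma(a,b)$ by $W^{(q)}(b-a)\hat h(b)$ and showing the ratio tends to $1$---but this is the same argument in a different dress.
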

\begin{proof}
By  \eqref{Gamma_rewrite},  because  $\overline{W}^{(q)}(b-a)/W^{(q)}(b-a) \xrightarrow{b \uparrow \infty} \Phi(q)^{-1} \in (0,\infty)$ as in Exercise 8.5 of \cite{Kyprianou_2006} and $\hat{h}(b) \xrightarrow{b \uparrow \infty} -\infty$,
\begin{align*}
\frac {\Gamma(a,b)} {W^{(q)}(b-a) \hat{h}(b)}
&= 1 + \frac {1+K}  {W^{(q)}(b-a) \hat{h}(b)} + \hat{\alpha} \frac { \overline{W}^{(q)}(b-a)} {W^{(q)}(b-a) \hat{h}(b) } \xrightarrow{b \uparrow \infty} 1.
\end{align*}
Now because $W^{(q)}(b-a) \hat{h}(b) \xrightarrow{b \uparrow \infty} - \infty$, this shows the claim.
\end{proof}

By \eqref{derivative_Gamma_b} and Lemma \ref{Gamma_b_limit_spec_pos},  for each $a \leq a^*$ (which ensures $\Gamma(a,a) \geq 0$ even for the bounded variation case because $a^* \leq B$), there exists a unique $b(a) > \underline{b}$ such that  $\Gamma(a,b(a)) = 0$.

\begin{lemma} \label{lemma_monotonicity_b_a}
Suppose $a^* > -\infty$. (i) If $a < a' < a^*$, then $b(a) > b (a') > b^*$ and (ii)  if $a < a^*$, $\gamma(x,b(a)) \leq 0$ for all $x \leq a$.
\end{lemma}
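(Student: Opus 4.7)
The plan is to prove (ii) first and then deduce (i). Two auxiliary observations drive the argument.

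Introduce the envelope function $m(b) := \Gamma(\bar{a}(b), b)$, which records the minimum value of $\Gamma(\cdot, b)$ (the unimodal structure comes from Lemma \ref{assump_decreasing_increasing}). I would first show $m$ is strictly decreasing on $(\underline{b}, \bar{b})$: by the envelope identity $\gamma(\bar{a}(b), b) = 0$, one obtains $m'(b) = \partial_b \Gamma(\bar{a}(b), b) = W^{(q)'}(b - \bar{a}(b)) \hat{h}(b) < 0$ since $\hat{h}(b) < 0$ for $b > \underline{b}$. Thus $b^*$ is the \emph{unique} zero of $m$ on this interval.

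Next, because $a^* = \bar{a}(b^*)$ and $m(b^*) = 0$, the function $\Gamma(\cdot, b^*)$ attains a double zero at $a^*$, so $\Gamma(a, b^*) > 0$ for every $a < a^*$. Combined with strict monotonicity of $\Gamma(a', \cdot)$ on $(\underline{b}, \infty)$ (which follows from $\partial_b \Gamma(a',b) = W^{(q)'}(b-a') \hat{h}(b) < 0$) and the defining relation $\Gamma(a', b(a')) = 0$, this forces $b(a') > b^*$ for every $a' < a^*$; this already covers the second inequality in (i).

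For (ii), I would fix $a_0 < a^*$ and argue by contradiction that $a_0 < \bar{a}(b(a_0))$. If $a_0 = \bar{a}(b(a_0))$, then $\gamma(a_0, b(a_0)) = 0 = \Gamma(a_0, b(a_0))$ yield $m(b(a_0)) = 0$, forcing $b(a_0) = b^*$ by uniqueness, contradicting $b(a_0) > b^*$. If $a_0 > \bar{a}(b(a_0))$, then since $\Gamma(\cdot, b(a_0))$ is strictly increasing on $[\bar{a}(b(a_0)), b(a_0))$ starting from the negative value $m(b(a_0))$, the zero at $a_0$ forces $\Gamma(b(a_0), b(a_0)) \geq 0$ and $a_0 = a_+(b(a_0))$, where $a_+(b)$ denotes the right zero of $\Gamma(\cdot, b)$. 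Implicit differentiation on $\Gamma(a_+(b), b) = 0$ gives $a_+'(b) = -\partial_b \Gamma(a_+(b), b)/\gamma(a_+(b), b) > 0$ (negative numerator, positive denominator since $a_+(b) > \bar{a}(b)$), and $a_+(b^*) = a^*$, so $a_0 = a_+(b(a_0)) > a^*$, again contradiction. The sign characterization of Lemma \ref{assump_decreasing_increasing} then yields $\gamma(x, b(a_0)) \leq 0$ for all $x \leq a_0$.

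Finally, the first inequality in (i) follows from (ii): for $a < a' < a^*$, (ii) applied at $a'$ gives $a' < \bar{a}(b(a'))$, so $\Gamma(\cdot, b(a'))$ is strictly decreasing on $[a, a']$, yielding $\Gamma(a, b(a')) > \Gamma(a', b(a')) = 0$; strict monotonicity of $\Gamma(a, \cdot)$ on $(\underline{b}, \infty)$ with $\Gamma(a, b(a)) = 0$ then forces $b(a) > b(a')$. I expect the main nuisance to be the bounded-variation case in which $b(a_0)$ may exceed $B$: there $\Gamma(b(a_0), b(a_0)) < 0$, so $a_+(b(a_0))$ simply does not exist and the second contradiction case is vacuous rather than failing, but this needs to be spelled out carefully to avoid confusion.
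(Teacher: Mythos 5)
Your proof is correct, and parts 1 and 3 of your argument (establishing $b(a')>b^*$ and then $b(a)>b(a')$) match the paper's own reasoning: a direct comparison using the strict monotonicity $\partial_b\Gamma(a,b)=W^{(q)'}(b-a)\hat{h}(b)<0$ for $b>\underline{b}$, combined with the unimodality of $\Gamma(\cdot,b)$ from Lemma~\ref{assump_decreasing_increasing} (plus, for $b(a)>b(a')$, the statement (ii) itself). Where you genuinely diverge from the paper is in the proof of (ii). The paper's argument is a single two-line contradiction: if $\gamma(a,b(a))>0$, then by Lemma~\ref{assump_decreasing_increasing} the function $\Gamma(\cdot,b(a))$ is increasing on $(a,a^*)$, so $\Gamma(a^*,b(a))>\Gamma(a,b(a))=0$; but $b(a)>b^*$ and $\partial_b\Gamma(a^*,\cdot)<0$ force $\Gamma(a^*,b(a))<\Gamma(a^*,b^*)=0$. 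Your route instead builds an envelope function $m(b)=\Gamma(\bar{a}(b),b)$ with $m'<0$ and a right-zero curve $b\mapsto a_+(b)$ with $a_+'>0$, then traps $a_0=a_+(b(a_0))>a_+(b^*)=a^*$. This gives a nice geometric picture (the two zeros of $\Gamma(\cdot,b)$ spread apart monotonically as $b$ increases past $b^*$), but it needs more checking: the envelope formula $m'(b)=W^{(q)'}(b-\bar{a}(b))\hat{h}(b)$ is valid only when $\bar{a}(b)$ is an interior stationary point; when $\bar{a}(b)=b$ (which occurs for the bounded-variation case in the range $\underline{b}<b<B$ of Case~1 of Proposition~\ref{proposition_existence_SP}(3)), one has instead $m'(b)=\tfrac{d}{db}\Gamma(b,b)=-\hat{\alpha}W^{(q)}(0)$ from \eqref{Gamma_at_b}, which happens to still be negative so your conclusion survives, but the formula as you wrote it is wrong in that regime. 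You also need $a_+$ to be a well-defined continuous curve on all of $[b^*,b(a_0)]$ for the implicit-differentiation comparison to carry $a_+(b^*)$ to $a_+(b(a_0))$, and you correctly flag the bounded-variation degeneracies, but the paper simply avoids all of this by using the two-sided inequality on $\Gamma(a^*,\cdot)$. In short: correct, a valid alternative, but heavier machinery than the one-step contradiction in the source, and with one formula that needs a patch in the non-interior case.
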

\begin{proof}
Suppose $a < a^*$. We first prove that $b(a) > b^*$. Assume for contradiction that $a < a^*$ and $b(a) \leq b^*$ hold simultaneously.  By  \eqref{derivative_Gamma_b} and  $\underline{b} < b(a)$, we have $0 = \Gamma(a, b(a)) \geq  \Gamma(a, b^*)$, which is a contradiction because $\Gamma(\cdot, b^*)$ is decreasing on $(a, a^*)$ (by Lemma \ref{assump_decreasing_increasing} and because $a^* = \bar{a}(b^*)$ for the case $a^* < b^*$ and by Proposition \ref{proposition_existence_SP}(3) for the case $a^* = b^*$) and $\Gamma(a^*, b^*) = 0$.  Hence whenever $a < a^*$ we must have $b^* < b(a)$.

This also shows $\gamma(a,b(a)) \leq 0$ (and hence (ii)) by Lemma \ref{assump_decreasing_increasing}.  Indeed, if $\gamma(a,b(a)) > 0$, this means by Lemma \ref{assump_decreasing_increasing} that $\gamma(\cdot,b(a)) > 0$ on $(a, a^*)$ and hence $0 = \Gamma(a, b(a)) < \Gamma(a^*, b(a))$.  However, this contradicts with $0 = \Gamma(a^*, b^*)$, which is larger than $\Gamma(a^*, b(a))$ by $b^* < b(a)$ and \eqref{derivative_Gamma_b}.

Now suppose $a < a' < a^*$ and assume for contradiction that $b(a) \leq b(a')$ to complete the proof for (i).  By  \eqref{derivative_Gamma_b} we have $0 = \Gamma(a, b(a)) \geq  \Gamma(a, b(a'))$, which is a contradiction because $\Gamma(\cdot, b(a'))$ is decreasing on $(a, a')$  (due to (ii)) and $\Gamma(a', b(a')) = 0$.
\end{proof}

\begin{lemma} \label{lemma_gamma_b_a_shape} Suppose $a^* > -\infty$.  Fix $a < a^*$.  We have $\Gamma(y, b(a)) \leq 0$ for $a < y < a^*$.
\end{lemma}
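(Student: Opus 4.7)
The plan is to exploit the convex shape of $\Gamma(\cdot,b(a))$ given by Lemma \ref{assump_decreasing_increasing}, combined with the strict monotonicity of $b(\cdot)$ from Lemma \ref{lemma_monotonicity_b_a}(i) and the defining uniqueness of $b(\cdot)$ as the zero of $\Gamma(a,\cdot)$ above $\underline{b}$. Pictorially, $\Gamma(\cdot,b(a))$ must stay below the $x$-axis on $(a,a^*)$, because the only way it can come back up and cross zero again is at the point $a'$ with $b(a')=b(a)$, and that point must lie to the right of $a^*$.

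First I would note that, by Lemma \ref{lemma_monotonicity_b_a}(ii), $\gamma(a,b(a))\le 0$, so Lemma \ref{assump_decreasing_increasing} forces $\bar{a}(b(a))\ge a$; in particular $\Gamma(\cdot,b(a))$ is nonincreasing on $(-\infty,\bar{a}(b(a)))$ and nondecreasing on $(\bar{a}(b(a)),b(a))$. Since $\Gamma(a,b(a))=0$ by definition of $b(a)$, the first monotonicity immediately gives $\Gamma(y,b(a))\le 0$ on $(a,\bar{a}(b(a)))$. If $\bar{a}(b(a))\ge a^*$ we are done, so from now on I would assume $\bar{a}(b(a))<a^*$.

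On the rising segment $(\bar{a}(b(a)),b(a))$, argue by contradiction: suppose some $y_0\in(\bar{a}(b(a)),a^*)$ satisfies $\Gamma(y_0,b(a))>0$. Since $\Gamma(\bar{a}(b(a)),b(a))\le 0$ (being the minimum of a function that equals $0$ at $a$), continuity and monotonicity on $(\bar{a}(b(a)),b(a))$ yield a unique $y^{\ast\ast}\in(\bar{a}(b(a)),y_0)\subset(a,a^*)$ with $\Gamma(y^{\ast\ast},b(a))=0$. By Lemma \ref{lemma_monotonicity_b_a}(i) and the construction of $b^*$ we have $b(a)>b^*>\underline{b}$, so $b(a)$ is a value above $\underline{b}$ at which $\Gamma(y^{\ast\ast},\cdot)$ vanishes; the uniqueness property used to define $b(\cdot)$ just before Lemma \ref{lemma_monotonicity_b_a} therefore gives $b(y^{\ast\ast})=b(a)$. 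But $a<y^{\ast\ast}<a^*$ and Lemma \ref{lemma_monotonicity_b_a}(i) force $b(a)>b(y^{\ast\ast})$, a contradiction.

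The one place where care is needed is the bookkeeping check that $b(a)>\underline{b}$ in all regimes of Proposition \ref{proposition_existence_SP}(2,3), so that the uniqueness characterisation of $b(y^{\ast\ast})$ actually applies: this holds because $b(a)>b^*$ with $b^*>\underline{b}$ in Proposition \ref{proposition_existence_SP}(2) and (3)(i), and because $b^*=B>\underline{b}$ in Proposition \ref{proposition_existence_SP}(3)(ii). I expect this, together with the separate handling of the trivial subcase $\bar{a}(b(a))\ge a^*$, to be the only subtlety; the remainder is a direct application of the already-established shape and monotonicity lemmas.
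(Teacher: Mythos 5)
Your proof is correct, but it follows a genuinely different route from the paper's. The paper's proof is short and direct: it notes $\Gamma(a,b(a))=0$, then uses \eqref{derivative_Gamma_b} (that $\Gamma(a^*,\cdot)$ is decreasing on $(\underline{b},\infty)$) together with $b(a)>b^*$ to conclude $\Gamma(a^*,b(a))\le\Gamma(a^*,b^*)=0$, and finally invokes the unimodal (decreasing-then-increasing) shape from Lemma~\ref{assump_decreasing_increasing} to see that the maximum of $\Gamma(\cdot,b(a))$ on $[a,a^*]$ is attained at one of the two endpoints, both of which are $\le 0$. You instead argue by contradiction: you use Lemma~\ref{lemma_monotonicity_b_a}(ii) to place $a$ on the decreasing branch, handle that branch directly, and on the increasing branch you rule out a positive value by showing that a zero crossing $y^{**}\in(a,a^*)$ would force $b(y^{**})=b(a)$ (via the uniqueness of the zero of $\Gamma(y^{**},\cdot)$ above $\underline{b}$), contradicting the strict inequality $b(a)>b(y^{**})$ from Lemma~\ref{lemma_monotonicity_b_a}(i). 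Both arguments rest on the same structural facts, but where the paper evaluates $\Gamma$ directly at the right endpoint $a^*$ using the sign of $\partial\Gamma/\partial b$, you leverage the already-established monotonicity and uniqueness of the map $a\mapsto b(a)$ instead. The tradeoff is a slightly longer case split (separating off $\bar{a}(b(a))\ge a^*$ and verifying $b(a)>\underline{b}$) in exchange for not re-invoking \eqref{derivative_Gamma_b} explicitly; the paper's version is a touch cleaner, but yours is equally sound.
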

\begin{proof}
By definition, $\Gamma(a,b(a))=0$.   Because $\Gamma(a^*,b)$ is decreasing in $b$ on $(\underline{b}, \infty)$ by \eqref{derivative_Gamma_b}, and $b(a) > b^* = b(a^*) \geq \underline{b}$ by Lemma \ref{lemma_monotonicity_b_a}(i),
\begin{align*}
\Gamma(a^*,b(a)) \leq \Gamma(a^*,b(a^*))=0.
\end{align*}
Because $\{ \Gamma(y, b(a)); a \leq y \leq a^* \}$ is decreasing and then increasing (or simply increasing or decreasing for $a \leq y \leq a^*$), we must have that
$\Gamma(y, b(a)) \leq \max (\Gamma(a,b(a)), \Gamma(a^*,b(a))) \leq 0$ for $a \leq y \leq a^*$, which proves the claim.
%
\end{proof}


\begin{proposition}  \label{proof_subharmonic_spec_pos}
Suppose $a^* > -\infty$.  For $x < a^*$, $(\mathcal{L}-q) J_{a^*,b^*}(x) + h(x) \geq 0$.
\end{proposition}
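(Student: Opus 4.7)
The plan is to follow the strategy used for the spectrally-negative analog, Proposition \ref{verification_1}(2): setting $\psi(x) := (\mathcal{L}-q)J_{a^*,b^*}(x) + h(x)$ and $J := J_{a^*,b^*}$, I would show that $\psi(a^{*-}) = 0$ and $\psi'(x) \leq 0$ for $x<a^*$, so that integrating yields $\psi(x) \geq 0$ on $(-\infty, a^*)$. Compared with the spectrally-negative case, the new feature is that the jump part of $\mathcal{L}$ is driven by positive jumps and therefore ``sees'' the curved piece of $J$ on $(a^*,b^*)$, so $\mathcal{L}J$ is no longer constant on $(-\infty,a^*)$. The construction of $(a^*,b^*)$ in Proposition \ref{proposition_existence_SP} bifurcates into $a^*<b^*$ (cases (2) and (3)(i)) and $a^*=b^*$ (case (3)(ii)), and I would treat the two situations in turn.

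For the regular case $a^*<b^*$, Proposition \ref{prop_smoothness_spec_pos}(2) gives $J\in C^2$ at $a^*$ with $J''(a^*)=\gamma(a^*,b^*)=0$, so $\psi$ is $C^1$ at $a^*$; combined with $\psi\equiv 0$ on $(a^*,b^*)$ from Proposition \ref{proof_harmonic_spec_pos}(1) this yields $\psi(a^*)=\psi'(a^{*-})=\psi'(a^{*+})=0$. Using $J'(x)=-1$ and $J''(x)=0$ on $(-\infty,a^*)$ and differentiating under the integral (justified since $J'(x+z)+1$ vanishes for $z\leq a^*-x$ and is bounded by $K+1$ on a tail of finite $\nu$-measure), one obtains
\begin{align*}
\psi'(x) = F(x) + q - \alpha, \qquad F(x) := \int_{(0,\infty)} \bigl[J'(x+z)+1\bigr]\,\nu(\diff z).
\end{align*}
By the convexity of $J$ (Proposition \ref{convexisty_SP}), $J'$ is nondecreasing, hence $F$ is nondecreasing in $x$; combined with $F(a^{*-})=\alpha-q$ (which is $\psi'(a^{*-})=0$ rewritten), this forces $F(x)\leq \alpha-q$ and therefore $\psi'(x)\leq 0$ for $x<a^*$.

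For the degenerate case $a^*=b^*=B$, which arises only in the bounded-variation sub-case (3)(ii) of Proposition \ref{proposition_existence_SP}, $J$ is linear below $a^*$ and coincides with $g$ above, and a direct computation of $\mathcal{L}J(x)$ for $x<a^*$ gives $\psi'(x)=(K+1)\,\nu(a^*-x,\infty)+q-\alpha$. The characterising inequality $(q+\nu(0,\infty))(1+K)\leq\hat\alpha$ of Proposition \ref{proposition_existence_SP}(3)(ii) rearranges to $(K+1)\,\nu(0,\infty)\leq\alpha-q$, so $\psi'\leq 0$; a short computation using the defining identity $\hat h(B)=-\delta(1+K)$ from \eqref{about_B} then yields $\psi(a^{*-}) = (K+1)\delta+\hat h(B)=0$, closing the argument.

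The main technical obstacle will be the differentiation of $\mathcal{L}J$ under the integral in the regular case, together with the clean identification of the boundary value $F(a^{*-})=\alpha-q$ via smooth fit at $a^*$; once these are in place, the remainder of the argument reduces to the convexity of $J$ and to elementary algebra based on the characterisations of $(a^*,b^*)$ in Proposition \ref{proposition_existence_SP}.
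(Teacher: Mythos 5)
Your strategy is genuinely different from the paper's. Rather than controlling the sign of $\psi(x):=(\mathcal{L}-q)J_{a^*,b^*}(x)+h(x)$ below $a^*$ directly, the paper introduces, for each $x<a^*$, the comparison function $J_{x,b(x)}$ with $\Gamma(x,b(x))=0$, and shows $(\mathcal{L}-q)(J_{a^*,b^*}-J_{x,b(x)})(x^+)\geq 0$ via the derivative ordering \eqref{derivative_dominant} built from Lemmas~\ref{lemma_monotonicity_b_a} and~\ref{lemma_gamma_b_a_shape}. Your ``$\psi(a^{*-})=0$, $\psi'\leq 0$ below $a^*$'' plan is cleaner and mirrors \eqref{generator_less}, and your treatment of the degenerate case $a^*=b^*=B$ is correct as stated: the value $\psi(a^{*-})=(1+K)\delta+\hat h(B)=0$ follows from \eqref{about_B} and continuous fit, and the bound $\psi'(x)=(K+1)\nu(a^*-x,\infty)+q-\alpha\leq 0$ follows from the constraint in Proposition~\ref{proposition_existence_SP}(3)(ii).

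However, there is a real gap in the regular case $-\infty<a^*<b^*$ when $X$ is of unbounded variation. You assert that since $J:=J_{a^*,b^*}$ is $C^2$ at $a^*$, the function $\psi$ is $C^1$ at $a^*$ and hence $\psi'(a^{*-})=\psi'(a^{*+})=0$. This inference is false: $\psi$ contains the term $\frac{\sigma^2}{2}J''$, so differentiating $\psi$ from the right of $a^*$ formally produces $\frac{\sigma^2}{2}J'''(a^{*+})$, a quantity that is neither assumed to exist (the paper only assumes $W^{(q)}\in C^1$, not $C^2$) nor controlled by the $C^2$-fit $\gamma(a^*,b^*)=0$. Your identity $\psi'(a^{*-})=F(a^{*-})+q-\alpha$ is correct, but the crucial equality $F(a^{*-})=\alpha-q$ is only automatic when $\sigma=0$. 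The argument can be salvaged by passing instead to $\phi:=\psi-\frac{\sigma^2}{2}J''$, which agrees with $\psi$ on $(-\infty,a^*)$, is genuinely $C^1$ at $a^*$, and on $(a^*,b^*)$ equals $-\frac{\sigma^2}{2}\gamma(\cdot,b^*)\leq 0$ with $\phi(a^{*+})=0$ (since $a^*=\bar a(b^*)$ is the turning point of $\Gamma(\cdot,b^*)$), so that $\phi'(a^{*+})\leq 0$ and hence $\psi'(a^{*-})\leq 0$ without ever invoking $J'''$. This repair is not contained in your proposal, and without it the unbounded-variation case does not close. The paper's comparison-function technique was designed precisely to sidestep this kind of higher-regularity issue, which is why it only ever needs $J_{x,b(x)}''(x^+)=\gamma(x,b(x))$ at a single boundary point rather than a one-sided derivative of the generator.
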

\begin{proof}
It is sufficient to prove
\begin{align}
(\mathcal{L}-q) (J_{a^*,b^*}-J_{x,b(x)})(x^+) := \lim_{y \downarrow x}(\mathcal{L}-q) (J_{a^*,b^*}-J_{x,b(x)})(y) \geq 0. \label{generator_positive_hypothesis}
\end{align}
Indeed if both \eqref{generator_positive_hypothesis} and $(\mathcal{L}-q) J_{a^*,b^*}(x) + h(x)< 0$ hold simultaneously,
\begin{align*}
0 > (\mathcal{L}-q) J_{a^*,b^*}(x) + h(x) \geq (\mathcal{L}-q) J_{x,b(x)}(x^+) + h(x),
\end{align*}
which leads to a contradiction because $(\mathcal{L}-q) J_{x,b(x)}(y) + h(y) = 0$ for $x < y < b(x)$ that holds similarly to Proposition \ref{proof_harmonic_spec_pos}(1).  Notice that the function $J_{x,b(x)}$ admits the same form as \eqref{value_function_spec_pos} because $\Gamma(x, b(x)) = 0$.

By \eqref{J_a_b_derivative_spec_pos}, for $x < y < b(x)$, \label{generator_parts}
\begin{align}
J_{x,b(x)}'(y)
=  \Gamma(y,b(x))-1 \quad \textrm{and} \quad
J_{x,b(x)}''(y)  =  \gamma(y,b(x)). \label{derivatives_x_b_x}
\end{align}
The dominated convergence theorem gives
\begin{align*}
&(\mathcal{L}-q) (J_{a^*,b^*}-J_{x,b(x)})(x^+) = - c (J_{a^*,b^*}' - J_{x,b(x)}')(x^+) + \frac 1 2 \sigma^2 (J_{a^*,b^*}'' - J_{x,b(x)}'')(x^+) \\ &+ \int_{(0,\infty)} \left[(J_{a^*,b^*}-J_{x,b(x)})(x+z) - (J_{a^*,b^*}-J_{x,b(x)})(x) -  (J_{a^*,b^*}'- J_{x,b(x)}')(x^+) z 1_{\{0 < z < 1\}} \right] \nu(\diff z) \\ &- q(J_{a^*,b^*}-J_{x,b(x)})(x^+).
\end{align*}
By the smooth fit of $J_{x,b(x)}$ at $x$ as in \eqref{fit_at_a_SP} and $J_{a^*,b^*}'(x) = -1$ and $J_{a^*,b^*}''(x) = 0$ as $x < a^*$, this is simplified to
\begin{align} \label{generator_parts}
\begin{split}
(\mathcal{L}-q) (J_{a^*,b^*}-J_{x,b(x)})(x^+) &= - \frac 1 2 \sigma^2  J_{x,b(x)}''(x^+) \\ &+ \int_{(0,\infty)} \left[(J_{a^*,b^*}-J_{x,b(x)})(x+z) - (J_{a^*,b^*}-J_{x,b(x)})(x)  \right] \nu(\diff z)
\\ &- q(J_{a^*,b^*}-J_{x,b(x)})(x^+).
\end{split}
\end{align}
By taking limits in \eqref{derivatives_x_b_x} and by Lemma \ref{lemma_monotonicity_b_a}(ii),
\begin{align*}
J_{x,b(x)}''(x^+)
&=  \gamma(x,b(x)) \leq 0.
\end{align*}

In order to prove the positivity of the integral part of \eqref{generator_parts}, we shall prove that
\begin{align}
J_{x,b(x)}'(y) \leq  J_{a^*,b^*}'(y), \quad y \in (x,\infty) \backslash \{b(x), b^*\}. \label{derivative_dominant}
\end{align}
Notice from \eqref{J_a_b_at_b} that both $J_{a^*,b^*}$ and $J_{x,b(x)}$ are continuous on $(x,\infty)$.  Recall also that $b(x) > b^*$ by Lemma \ref{lemma_monotonicity_b_a}(i).

(i) For $x < y < a^*$, by Lemma \ref{lemma_gamma_b_a_shape}, $J_{x,b(x)}'(y)
= \Gamma(y,b(x))-1  \leq -1 = J_{a^*,b^*}'(y)$.

(ii) For $a^* < y < b^*$
\begin{align*}
\begin{split}
J_{x,b(x)}'(y)
&= \Gamma(y,b(x))-1 \leq  \Gamma(y,b^*)-1  = J_{a^*,b^*}'(y).
\end{split}
\end{align*}
Here the inequality holds because we have $b(x) >  b^* > \underline{b}$ and because $\Gamma(y, \cdot)$ is decreasing by \eqref{derivative_Gamma_b}.

(iii) For $b^* < y < b(x)$, because, by Lemma \ref{assump_decreasing_increasing} and \eqref{derivative_Gamma_b}, $\Gamma(y,b(x)) \leq \Gamma(b^*,b(x)) \vee \Gamma(b(x),b(x)) \leq \Gamma(b^*,b^*) \vee \Gamma(b(x),b(x)) =  \Gamma(b^*,b^*) $, and hence
\begin{align*}
\begin{split}
J_{x,b(x)}'(y)
= \Gamma(y,b(x))-1  \leq K + W^{(q)}(0) \hat{h}(b^*)  \leq K = J_{a^*,b^*}'(y),
\end{split}
\end{align*}
where the last inequality holds by $b^* > \underline{b}$.

(vi) For $y > b(x)$, we have that $J_{x,b(x)}'(y) = J_{a^*,b^*}'(y)=K$.  Hence \eqref{derivative_dominant} holds, and consequently the integral of \eqref{generator_parts} is positive.

Finally, thanks to the continuous fit for $J_{x,b(x)}$ at $b(x)$ and because $b(x) > b^*$, $J_{x,b(x)}(b(x)) = J_{a^*,b^*}(b(x)) = C + K b(x)$.  This together with \eqref{derivative_dominant} shows
\begin{align*}
J_{x,b(x)}(x^+) \geq  J_{a^*,b^*}(x^+).
\end{align*}
Putting altogether, \eqref{generator_positive_hypothesis} indeed holds.  This completes the proof.
\end{proof}


Propositions \ref{convexisty_SP}, \ref{proof_harmonic_spec_pos} and \ref{proof_subharmonic_spec_pos} show the variational inequality, and we can modify slightly the proof of Theorem \ref{verification} to show that this is a sufficient condition for optimality.  The proof is very similar, and we do not include it so as to avoid unnecessary repetitions.  The only differences worthy of remark are (1) the smoothness of the value function in view of the difference between Propositions \ref{proposition_smoothness} and  \ref{prop_smoothness_spec_pos} and (2) the  estimation  of the running minimum process $\underbar{X}$ (obtained in Lemma \ref{transcondition} for the spectrally negative \lev case), which is used to interchange limits over expectations.   However, these do not cause any problems.  For (1), the candidate value function fails to be differentiable at $b^*$ (although continuous) for the case of bounded variation, but the Meyer-It\^o formula (Theorem 4.71 of \cite{MR2273672}) applies and the local-time term vanishes due to paths of bounded variation.   For (2), as it is pointed out in Remark \ref{Rem spec pos inf}, it also holds for the spectrally positive \lev case.  Hence Theorem \ref{verification_SP} holds.

\section{Numerical Examples} \label{section_numerics}
In this section, we confirm the results numerically using the case $h$ is given by  \eqref{h_g_SP}.  For $X$, we use the spectrally negative process with i.i.d.\ phase-type distributed jumps \cite{Asmussen_2004} of the form
\begin{equation*}
  X_t  - X_0= \delta t+\sigma B_t - \sum_{n=1}^{N_t} Z_n, \quad 0\le t <\infty,
\end{equation*}
for some $\delta \in \R$ and $\sigma \geq 0$, and also the spectrally positive \lev process defined as its dual.  Here $B=\{B_t; t\ge 0\}$ is a standard Brownian motion, $N=\{N_t; t\ge 0\}$ is a Poisson process with arrival rate $\kappa$, and  $Z = \left\{ Z_n; n = 1,2,\ldots \right\}$ is an i.i.d.\ sequence of phase-type-distributed random variables with representation $(m,{\bm \alpha},{\bm T})$; see \cite{Asmussen_2004}.
These processes are assumed mutually independent. The Laplace exponent \eqref{laplace_spectrally_positive} is then
\begin{align*}
 \psi(s)   = \delta s + \frac 1 2 \sigma^2 s^2 + \kappa \left( {\bm \alpha} (s {\bm I} - {\bm{T}})^{-1} {\bm t} -1 \right),
 \end{align*}
which can be extended to $s \in \mathbb{C}$ except at the negative of eigenvalues of ${\bm T}$.  Suppose $\{ -\xi_{i,q}; i \in \mathcal{I}_q \}$ is the set of the roots of the equality $\psi(s) = q$ with negative real parts, and if these are assumed distinct, then
the scale function can be written
\begin{align*}
W^{(q)}(x) = \frac {e^{\Phi(q) x}} {\psi'(\Phi(q))} - \sum_{i \in \mathcal{I}_q} C_{i,q} e^{-\xi_{i,q}x},
\end{align*}
where
\begin{align*}
C_{i,q} &:= \left. \frac { s+\xi_{i,q}} {q-\psi(s)} \right|_{s = -\xi_{i,q}} = - \frac 1 {\psi'(-\xi_{i,q})};
\end{align*}
see \cite{Egami_Yamazaki_2010_2}.  Here $\{ \xi_{i,q}; i \in \mathcal{I}_q \}$ and  $\{ C_{i,q}; i \in \mathcal{I}_q \}$ are possibly complex-valued.

In our example, we assume $m = 6$ and
\begin{align*}
&{\bm T} = \left[ \begin{array}{rrrrrr}   -5.7714 &   0.0881 &   0.0080 &   0.0031  &  0.0002  &  0.0036 \\
    0.0000  & -6.3823 &   0.0031 &   0.0000   & 0.0000  &  6.3793 \\
         0.0000   & 5.8540  & -5.8540    &     0.0000  &  0.0000 &   0.0000 \\
    6.6551  &  0.0353  &  1.4502  & -8.1408  &  0.0000 &   0.0003 \\
    0.0000  &  0.0007 &   0.0054  &  6.0959 &  -6.1029 &   0.0009 \\
    0.0000   & 0.0092  &  0.0049 &   0.0000 &   6.4107 &  -6.4249 \end{array} \right], \quad {\bm \alpha} =    \left[ \begin{array}{l}            0.0000 \\
    0.0000 \\
    1.0000 \\
         0.0000 \\
         0.0000 \\
    0.0000  \end{array} \right],
\end{align*}
which give an approximation of the log-normal distribution with density function
\begin{align*}
f(x) = \frac 2 {x \sqrt{2 \pi}} \exp \left\{ -  {2(\log x)^2}  \right\}, \quad x > 0,
\end{align*}
obtained using the EM-algorithm; see \cite{Egami_Yamazaki_2010_2} regarding the approximation performance of the corresponding scale function.
Throughout this section, we let $q = 0.05$ and $\delta = \kappa = 2.5$.  We consider both the bounded and unbounded variation cases with $\sigma = 0$ and $\sigma = 1$.

\begin{figure}[htbp]
\begin{center}
\begin{minipage}{1.0\textwidth}
\centering
\begin{tabular}{cc}
 \includegraphics[scale=0.58]{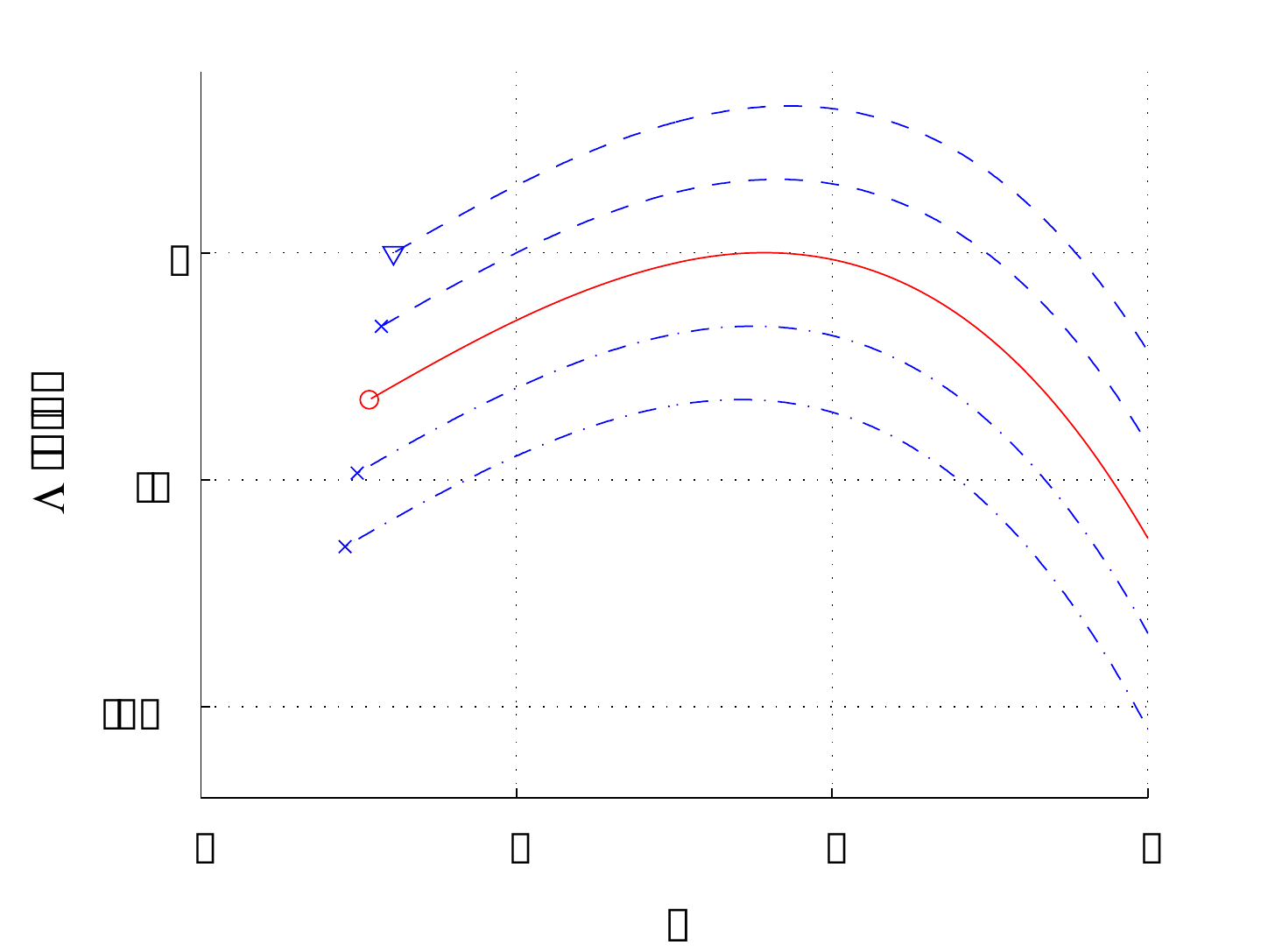} & \includegraphics[scale=0.58]{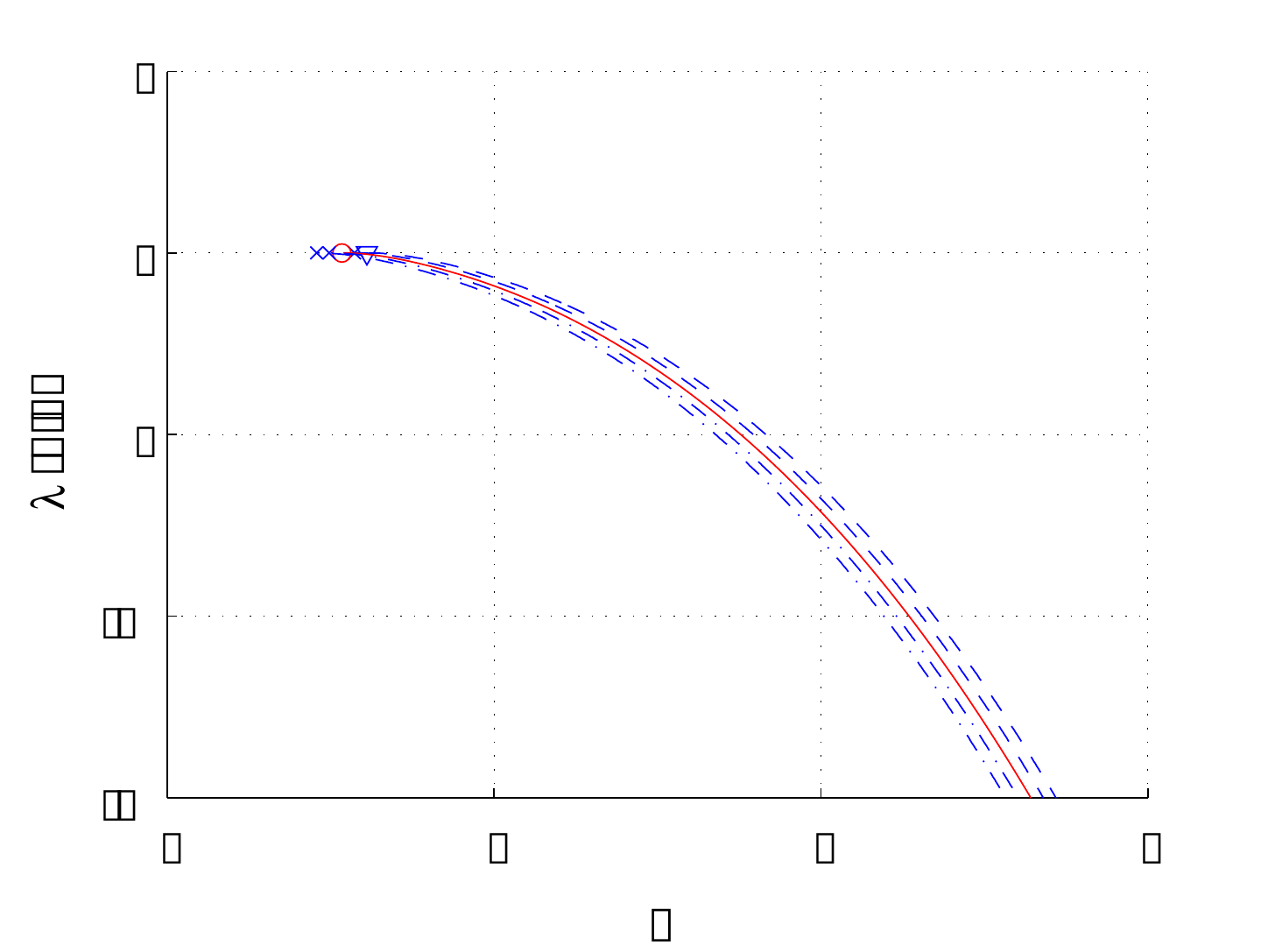}  \\
\multicolumn{2}{c}{Unbounded variation case with $\sigma = 1$}\vspace{0.3cm} \\
 \includegraphics[scale=0.58]{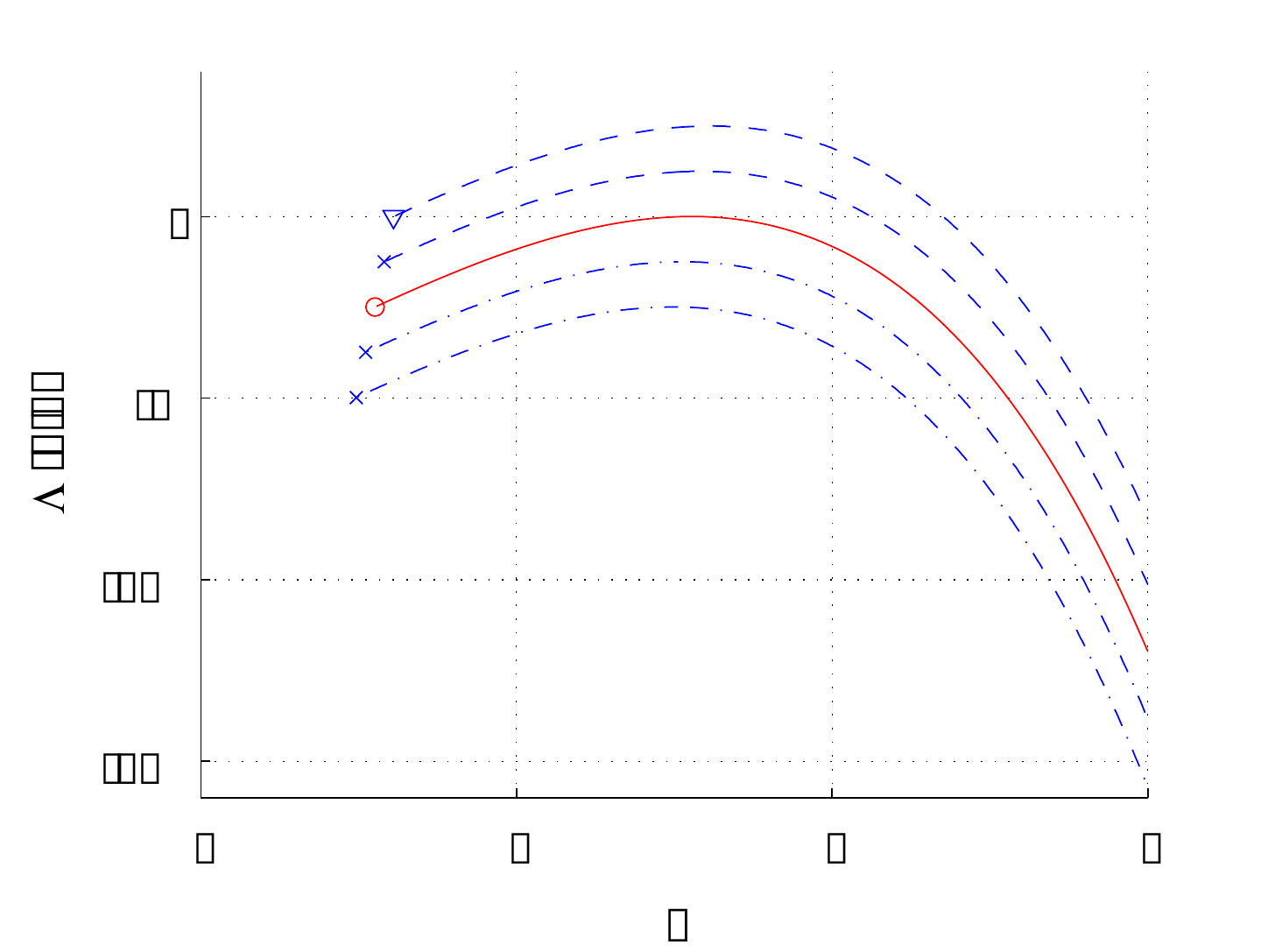} & \includegraphics[scale=0.58]{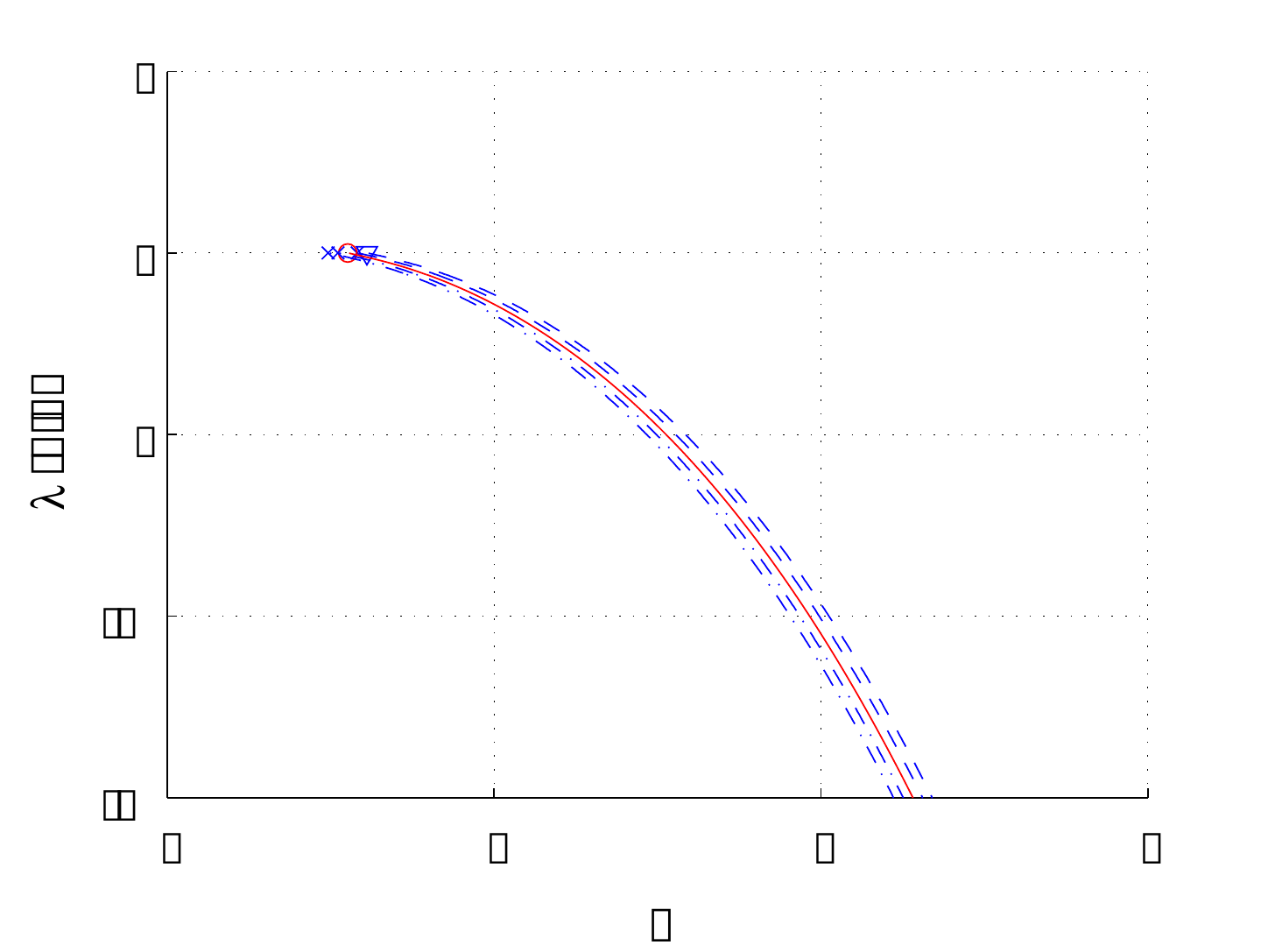}  \\
\multicolumn{2}{c}{Bounded variation case with $\sigma = 0$}\vspace{0.3cm} \\
\end{tabular}
\end{minipage}
\caption{$\Lambda(a,b)$ (left) and $\lambda(a,b)$ (right) as a function of $b$ for $a =2 a^* - \bar{a}, a^* + (a^*-\bar{a})/2, a^*, (\bar{a}+a^*)/2,  \bar{a}$. Both $\Lambda(a,b)$ and $\lambda(a,b)$ are monotonically increasing in $a$; the ones starting with inverted triangles are for $\bar{a}$ and the ones with circles are for $a^*$.} \label{figure_Lambda}
\end{center}
\end{figure}

\begin{figure}[htbp]
\begin{center}
\begin{minipage}{1.0\textwidth}
\centering
\begin{tabular}{cc}
 \includegraphics[scale=0.58]{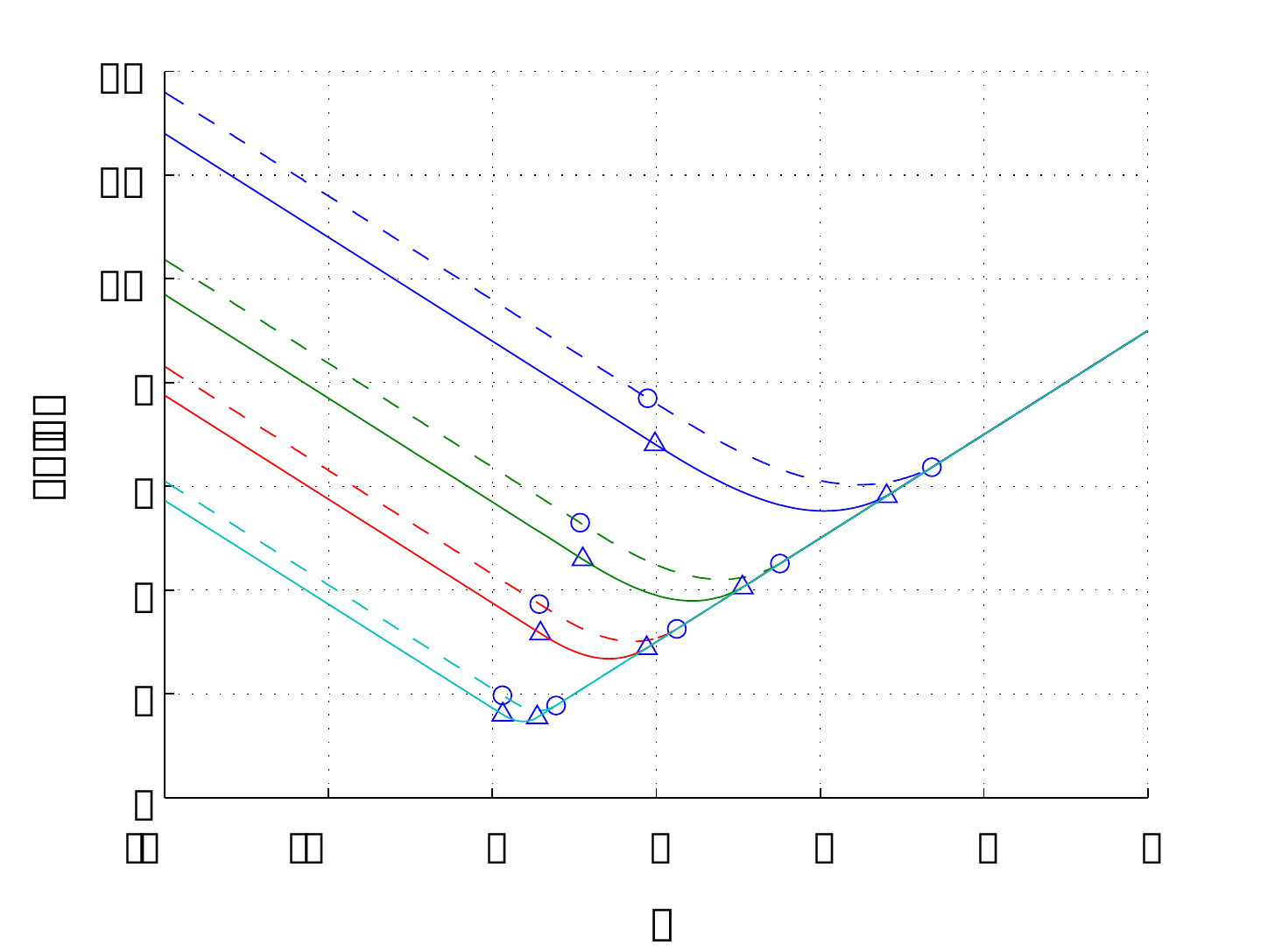} & \includegraphics[scale=0.58]{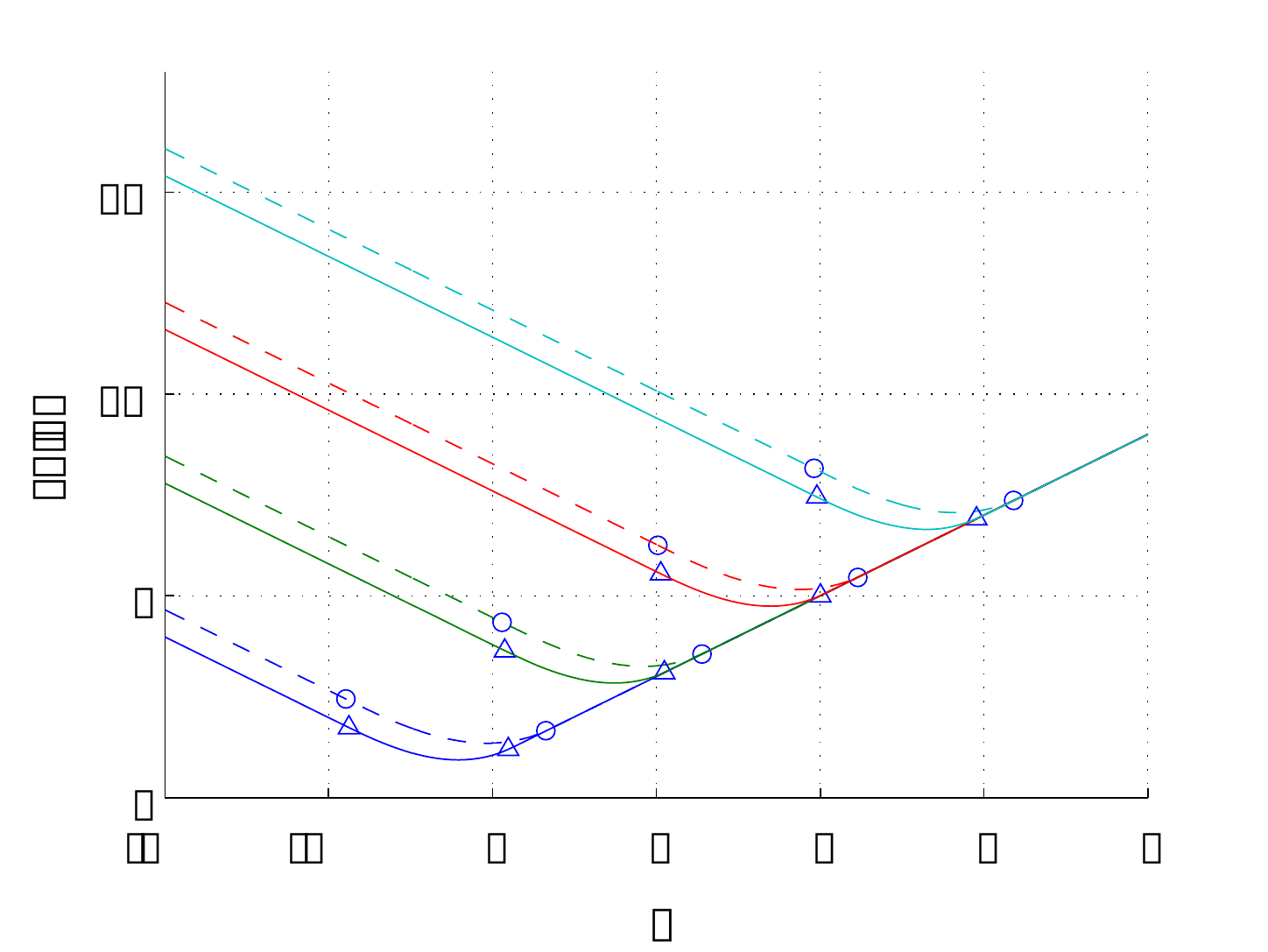}  \\
$\alpha =0.5,1,2,10$ &$\beta = -2,0,2,4$ \vspace{0.3cm} \\
 \includegraphics[scale=0.58]{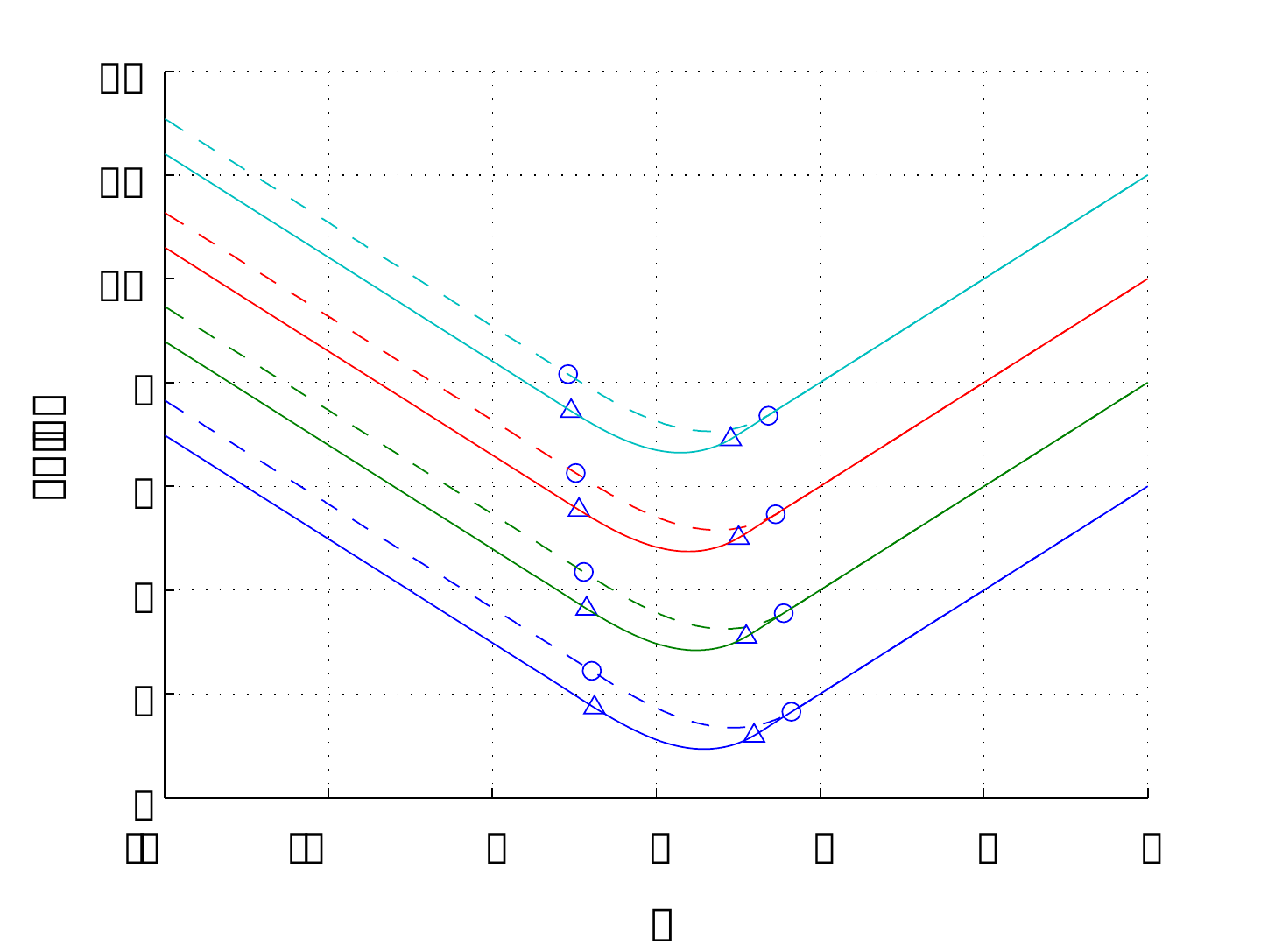} & \includegraphics[scale=0.58]{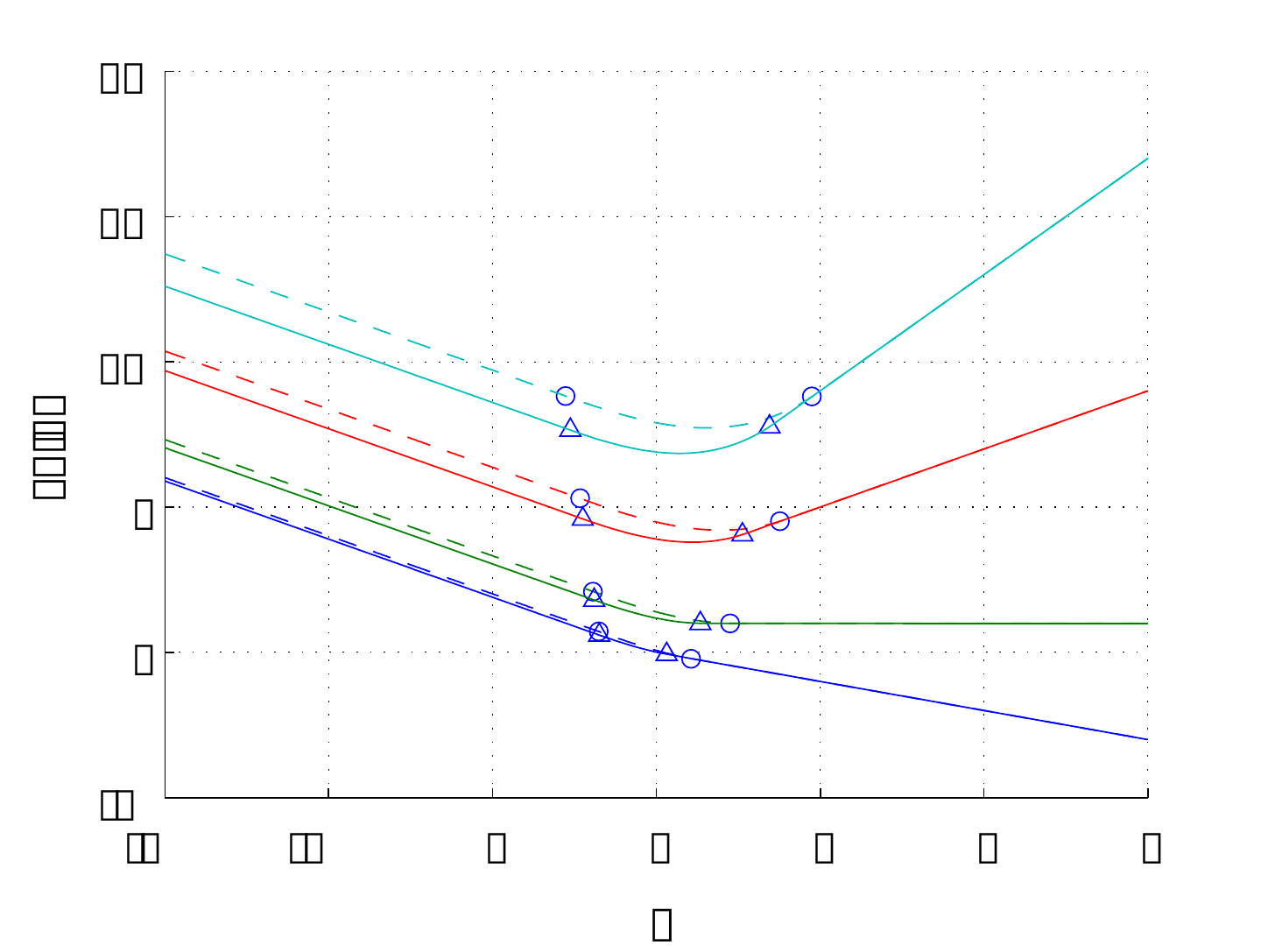}  \\
 $C = -2,0,2,4$&  $K =-0.5,0,1,2$ \vspace{0.3cm} \\
\end{tabular}
\end{minipage}
\caption{The value functions for the spectrally negative \lev case. } \label{figure_value_negative}
\end{center}
\end{figure}

\subsection{Spectrally negative \lev case}  We first consider the spectrally negative \lev case as studied in Section  \ref{section_spectrally_negative}.  Recall that the values of $a^*$ and $b^*$ are such that $\Lambda(a^*,b^*) = \lambda(a^*, b^*) = 0$. Figure \ref{figure_Lambda} shows, for both the unbounded and bounded variation cases,  $\Lambda(a,\cdot): b\mapsto \Lambda(a,b)$ and  $\lambda(a,\cdot): b\mapsto \lambda(a,b)$ for fixed  $a =2 a^* - \bar{a}, a^* + (a^*-\bar{a})/2, a^*, (\bar{a}+a^*)/2,  \bar{a}$.  Regarding the computation of the pair $(a^*,b^*)$, we observe, by \eqref{lambda_derivative_b} and \eqref{lambda_small_starting}, that the function $\lambda(a,\cdot)$ for each fixed $a$ starts at a positive value and monotonically decreasing in $b$ to $-\infty$ and hence its zero $\tilde{b}(a)$ (see \eqref{def_b_of_a_SN}) can be computed via bisection method. Recalling that $\Lambda(a,\tilde{b}(a))$ is increasing in $a$, we then apply another bisection method to $\{\Lambda(a,\tilde{b}(a)); a \geq 0\}$ to compute $(a^*,b^*)$.  Once these are obtained, the value function is computed by \eqref{def_value_function}.

In Figure \ref{figure_value_negative}, we show how the value functions change in each of the parameters $\alpha, \beta, C$ and $K$.  The parameters we use are $\alpha=\beta=C=K=1$ unless otherwise specified.  In each figure, the bounded variation case  ($\sigma =0$) is plotted in solid while the unbounded variation case ($\sigma = 1$) is in dotted.  The circles (resp.\ triangles) indicate the values at $a^*$ and $b^*$ for the case of unbounded (resp.\ bounded) variation.   We can confirm the results in Proposition \ref{proposition_smoothness}  that the value functions are smooth in all cases.  In addition, the value of the game is monotone in each parameter.  Finally, the value function is indeed convex for both the bounded and unbounded variation cases; this is consistent with Proposition \ref{convexity_SN}.

\subsection{Spectrally positive \lev case}  We now move onto the spectrally positive \lev case as studied in Section  \ref{section_spectrally_positive}. Recall that the values of $a^*$ and $b^*$
such that $\Gamma(a^*, b^*)/W^{(q)'}(b^*-a^*) = 0$ and $a^* := \arg \inf_{a \leq b^*} \Gamma(a, b^*)$ are first attained when we increase the value of $b$ starting from $\underline{b}$.  When $X$ is of unbounded variation or $\Upsilon(B) > 0$, we always have $a^* < b^*$ with $\Gamma(a^*,b^*) = \gamma(a^*,b^*) = 0$; otherwise $a^*=b^*=B$ with $\Gamma(a^*,b^*) = 0$.  In both cases, because $\Gamma(\cdot, b)$ has at most one local minimum (in other words, $\gamma(\cdot, b)$ has at most one zero), the values of $(a^*, b^*)$ can be obtained by bisection methods similarly to the spectrally negative \lev case as obtained above.

In Figure \ref{figure_Gammas}, we show  $\Gamma(b, \cdot)$ and  $\gamma(b, \cdot)$ for fixed $b = \underline{b}, (\underline{b}+b^*)/2,  b^*, b^* + (b^*-\underline{b})/2, 2 b^* - \underline{b}$ for (i) $\sigma = 1$ and $\alpha = 1$, (ii) $\sigma = 0$ and $\alpha = 1$ and (iii) $\sigma = 0$ and $\alpha = 10$.  We choose these values of $\alpha$ so that $a^* < b^*$ in (i) and (ii) while $a^*=b^* = B$ in (iii).
In Figure \ref{figure_with_diffusion}, we show the value functions in the same fashion as in Figure \ref{figure_value_negative}.  From our choice of $\alpha$ such that $\alpha > q$, in all the results $a^* > -\infty$. Here, for the bounded variation case with $\alpha = 10$ in the top left plot, $a^* = b^* = B$; in other cases we have $a^* < b^*$.   The results are consistent with Proposition \ref{prop_smoothness_spec_pos} that the value function is twice differentiable at $a^*$ (unless $a^* = b^*$), and at $b^*$ it is continuous (resp.\ differentiable) when $X$ is of bounded (resp.\ unbounded) variation.  We also confirm the convexity of the value function as in Proposition \ref{convexisty_SP}.

\begin{figure}[htbp]
\begin{center}
\begin{minipage}{1.0\textwidth}
\centering
\begin{tabular}{cc}
 \includegraphics[scale=0.58]{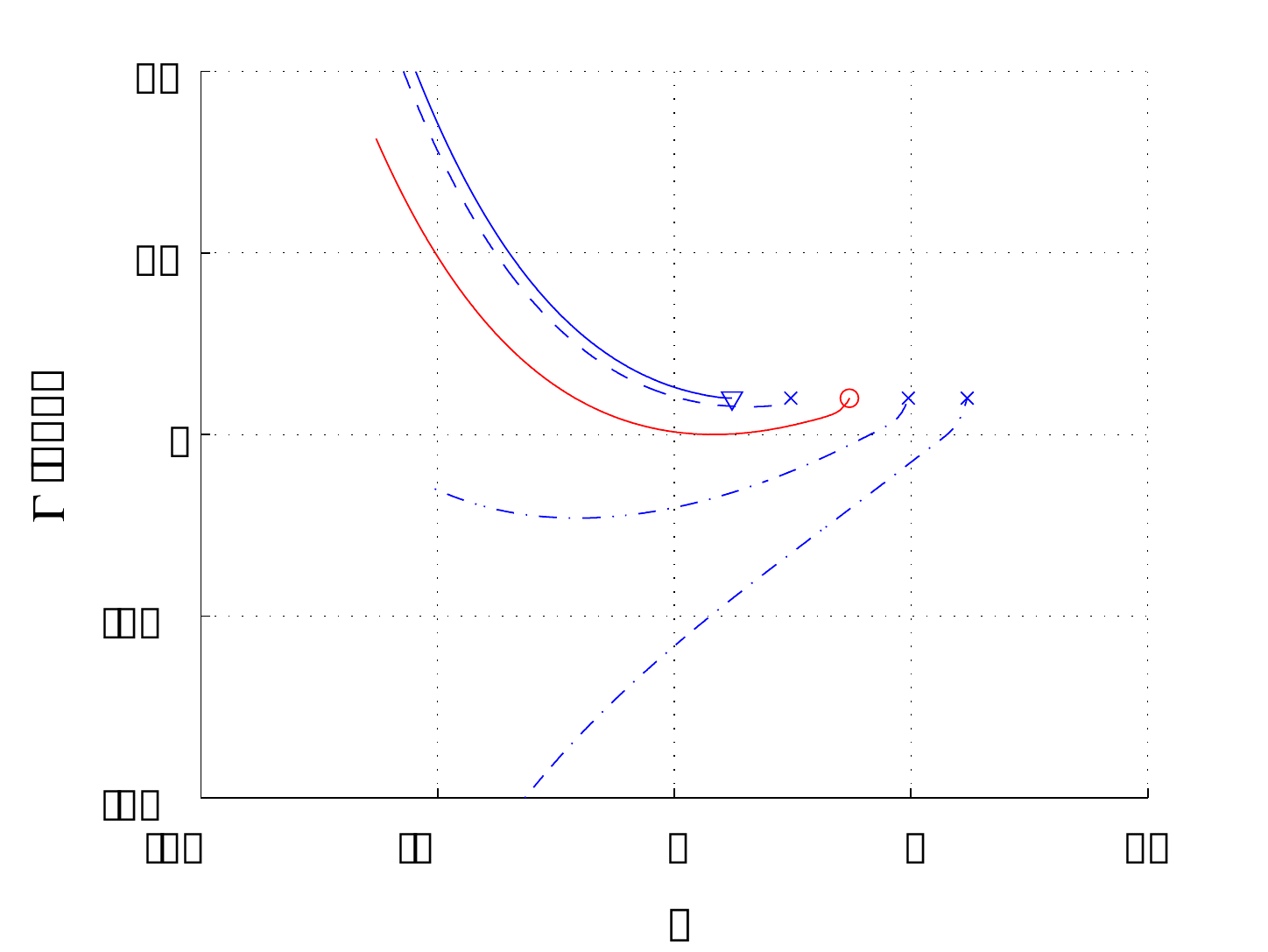} & \includegraphics[scale=0.58]{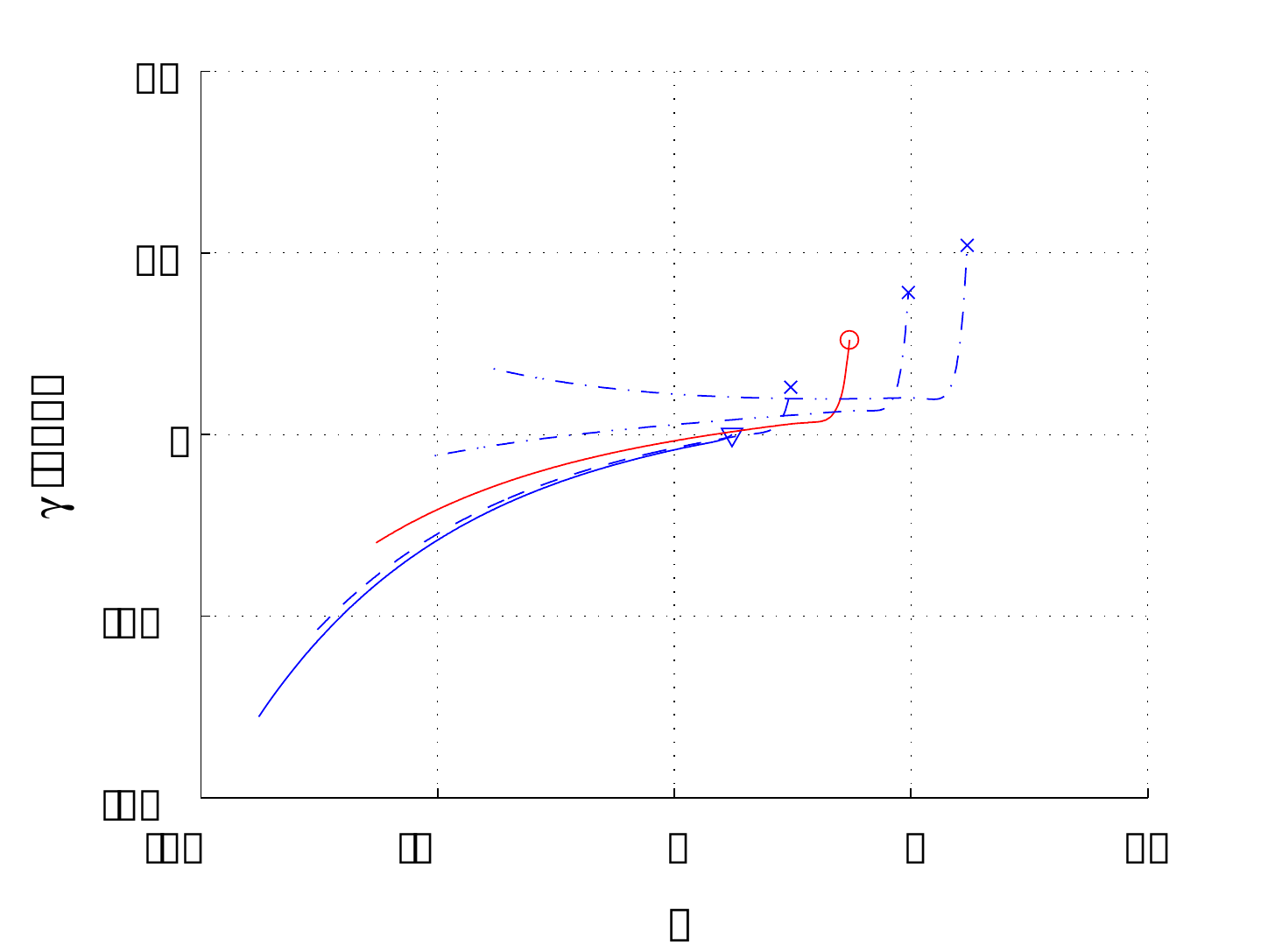}  \\
\multicolumn{2}{c}{(i) $\sigma = 1$ and $\alpha = 1$}\vspace{0.3cm} \\
 \includegraphics[scale=0.58]{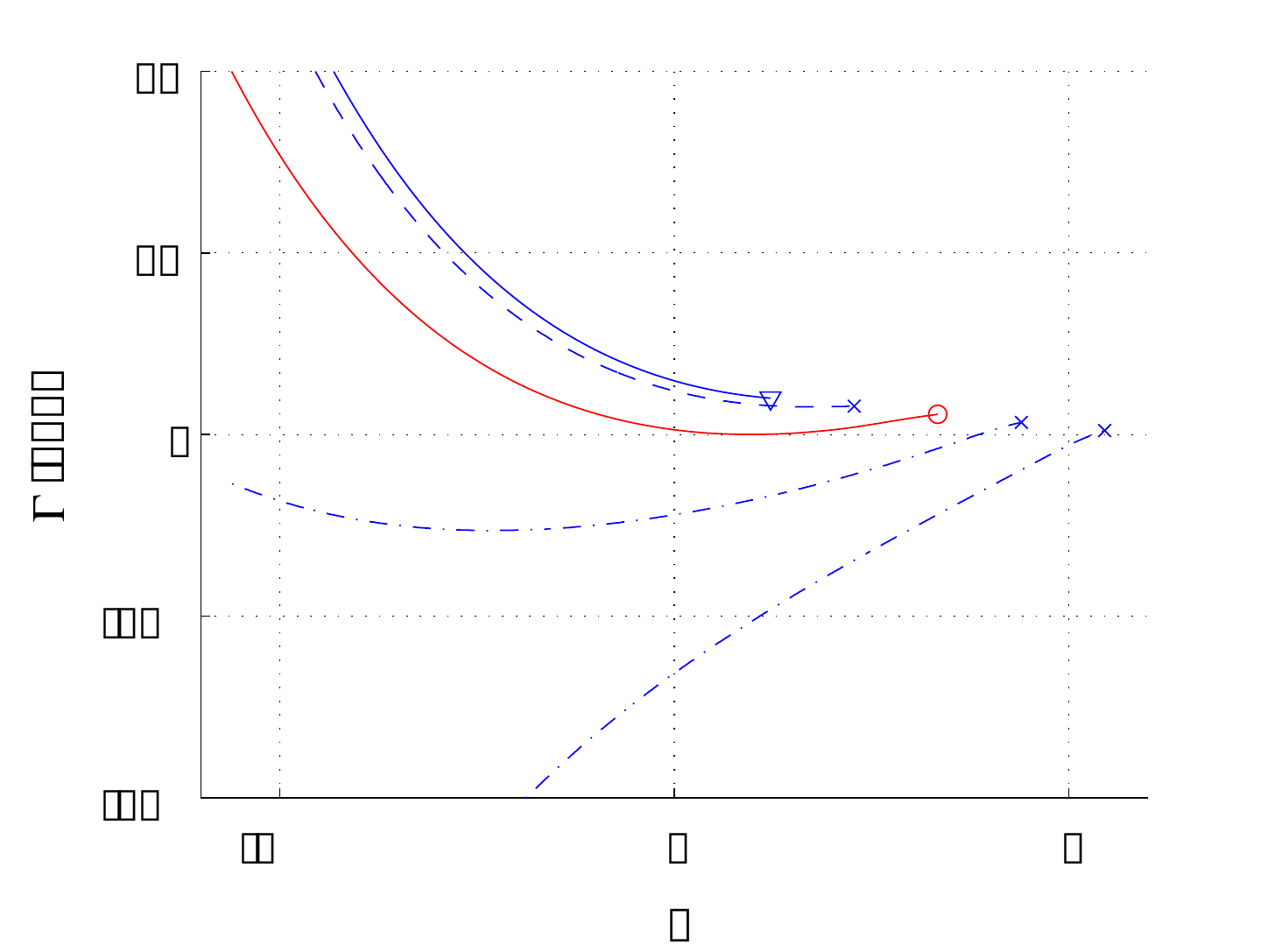} & \includegraphics[scale=0.58]{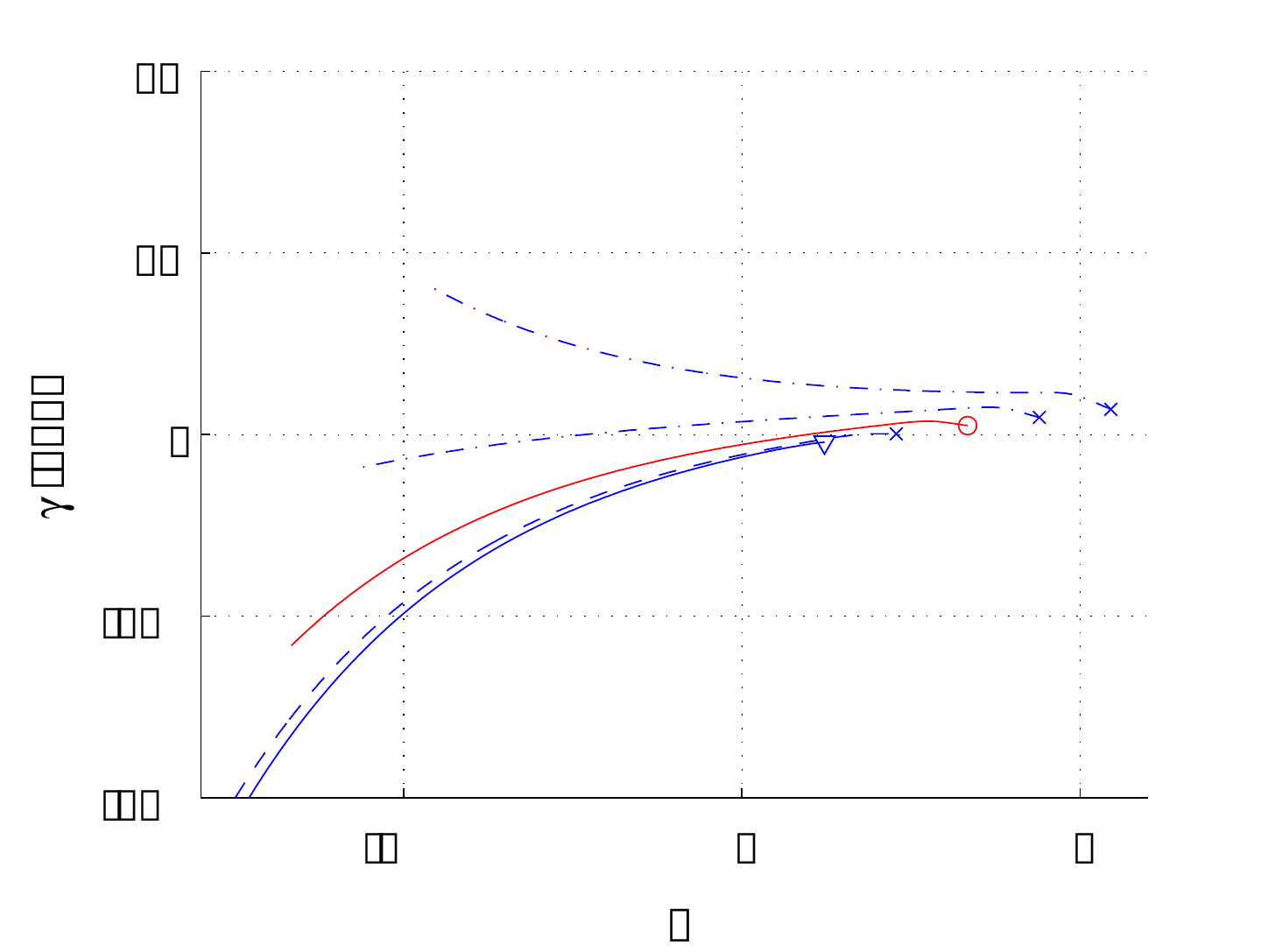}  \\
\multicolumn{2}{c}{(ii) $\sigma = 0$ and $\alpha = 1$}\vspace{0.3cm} \\
 \includegraphics[scale=0.58]{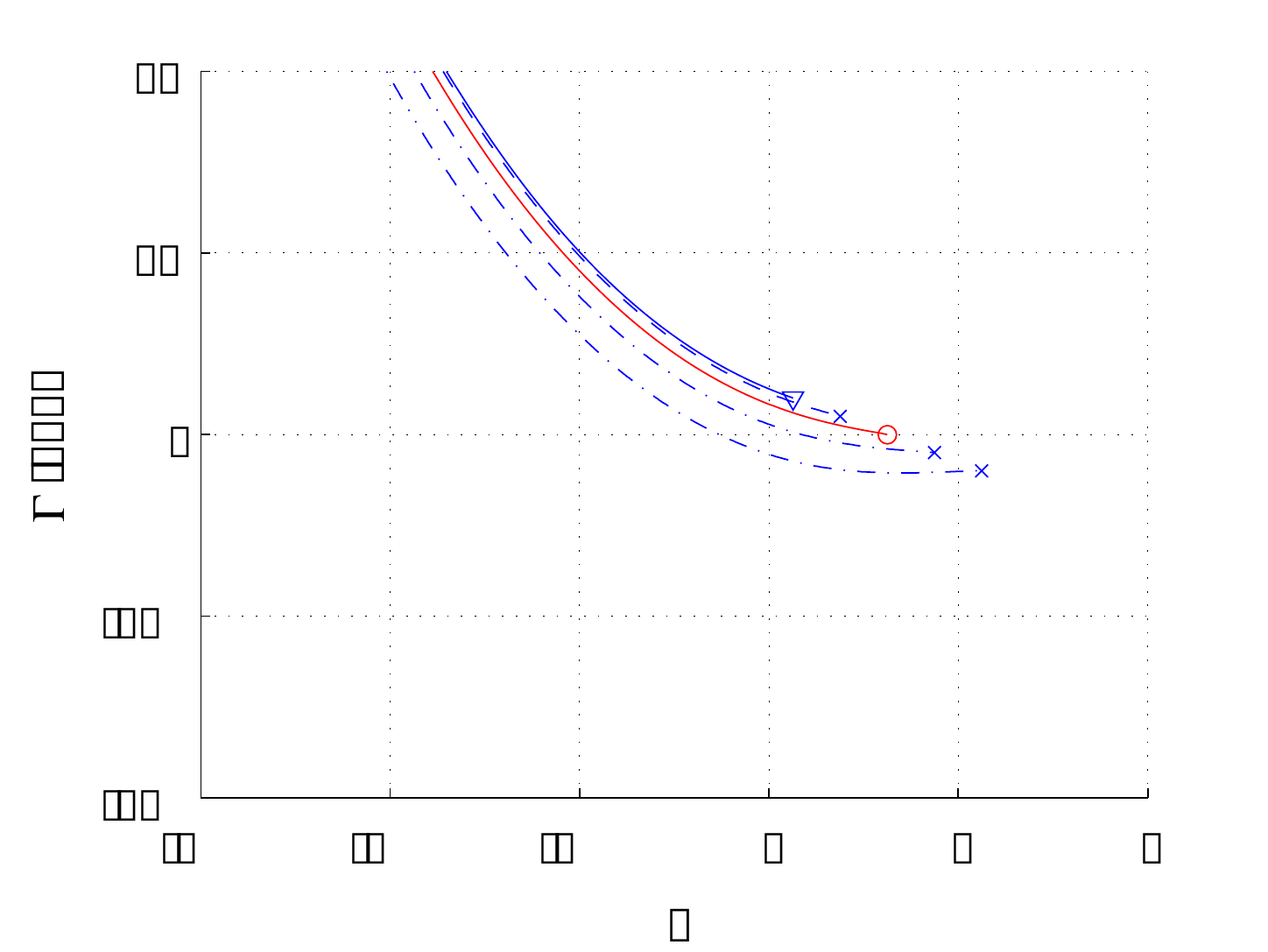} & \includegraphics[scale=0.58]{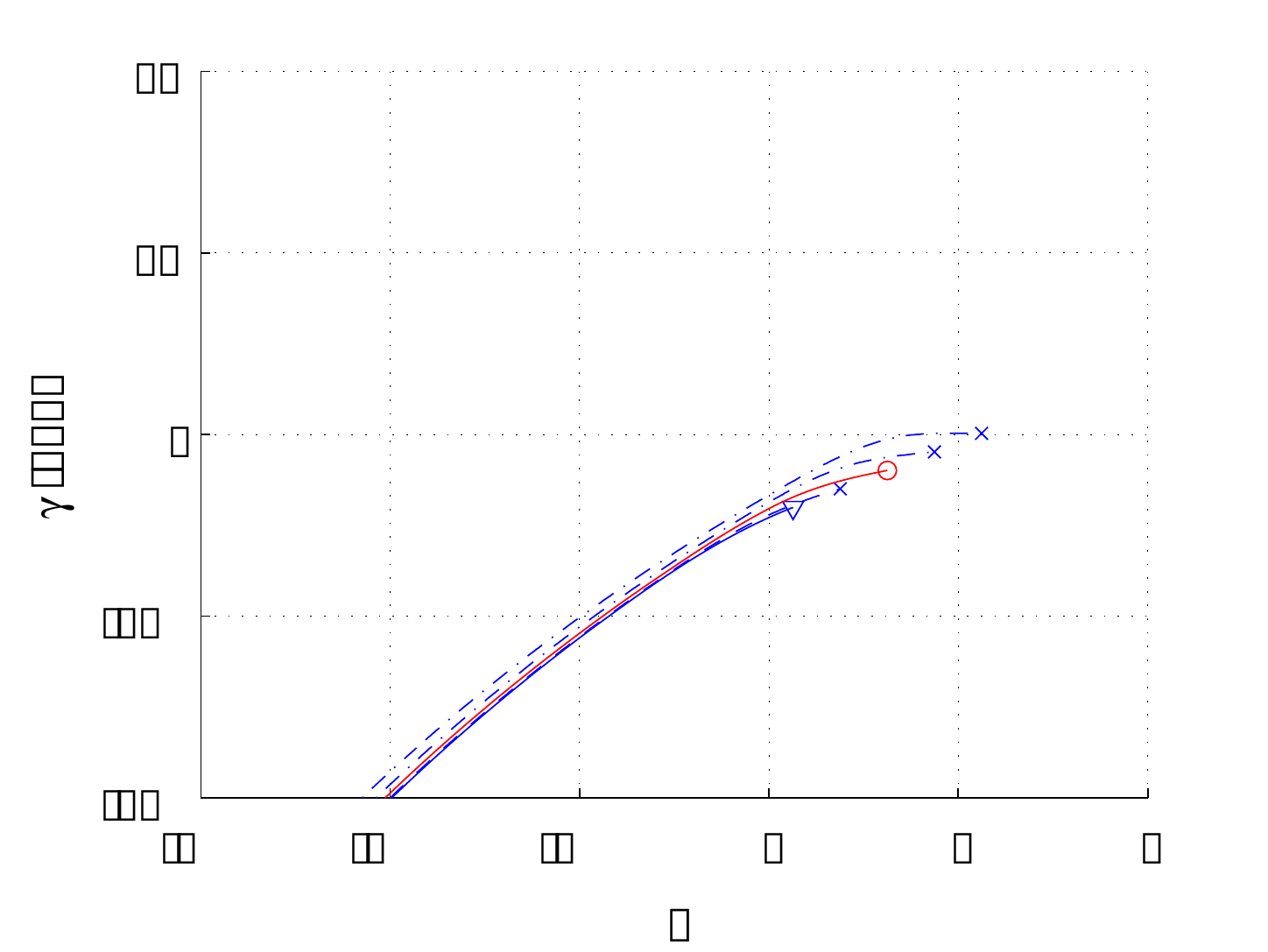}  \\
\multicolumn{2}{c}{(iii) $\sigma = 0$ and $\alpha = 10$} \vspace{0.3cm} \\
\end{tabular}
\end{minipage}
\caption{$\Gamma(a,b)$ (left) and $\gamma(a,b)$ (right) as a function of $a$ for $b = \underline{b}, (\underline{b}+b^*)/2,  b^*, b^* + (b^*-\underline{b})/2, 2 b^* - \underline{b}$. The ones starting with inverted triangles are for $\underline{b}$ and the ones with circles are for $b^*$.} \label{figure_Gammas}
\end{center}
\end{figure}

\begin{figure}[htbp]
\begin{center}
\begin{minipage}{1.0\textwidth}
\centering
\begin{tabular}{cc}
 \includegraphics[scale=0.58]{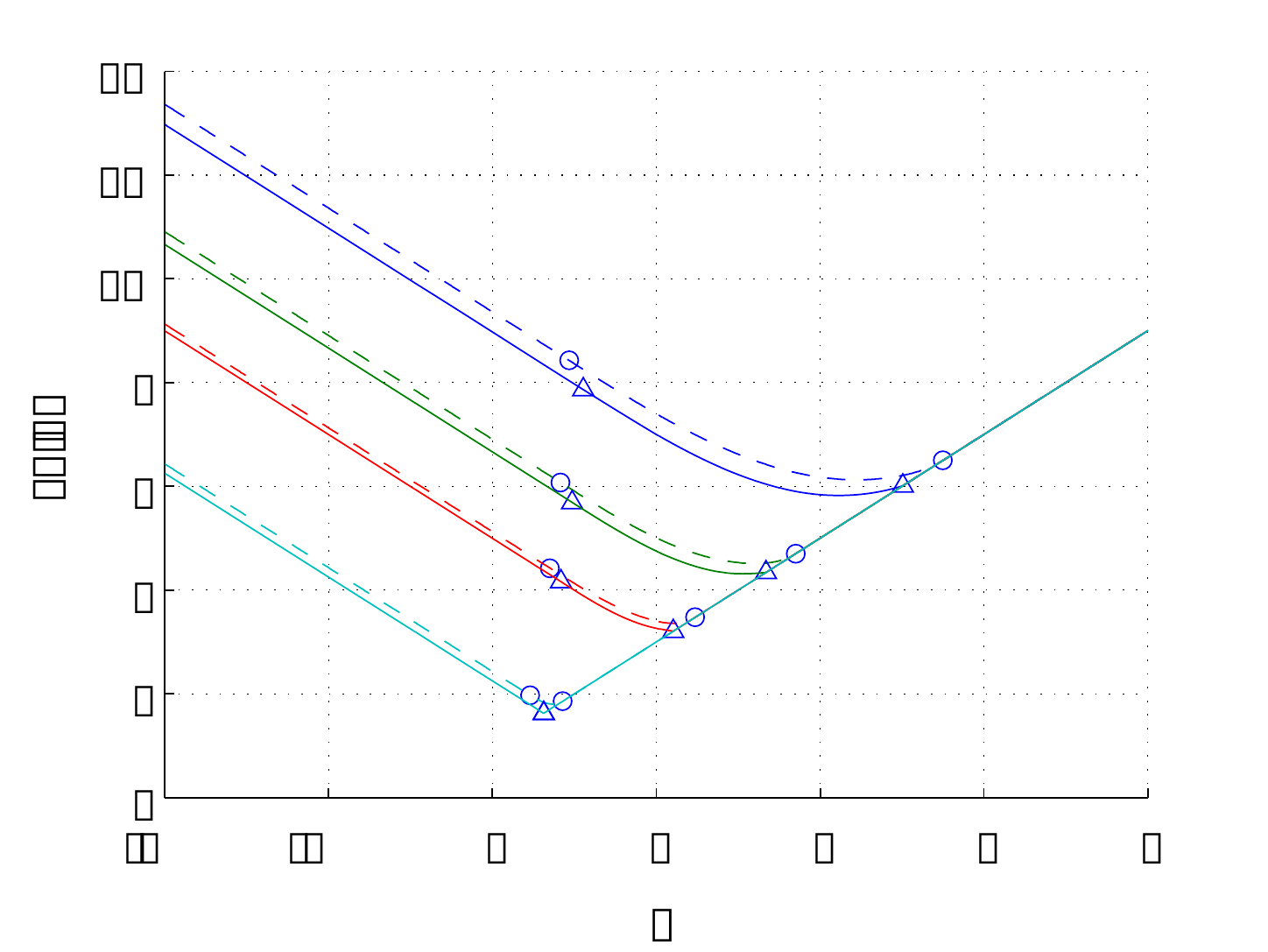} & \includegraphics[scale=0.58]{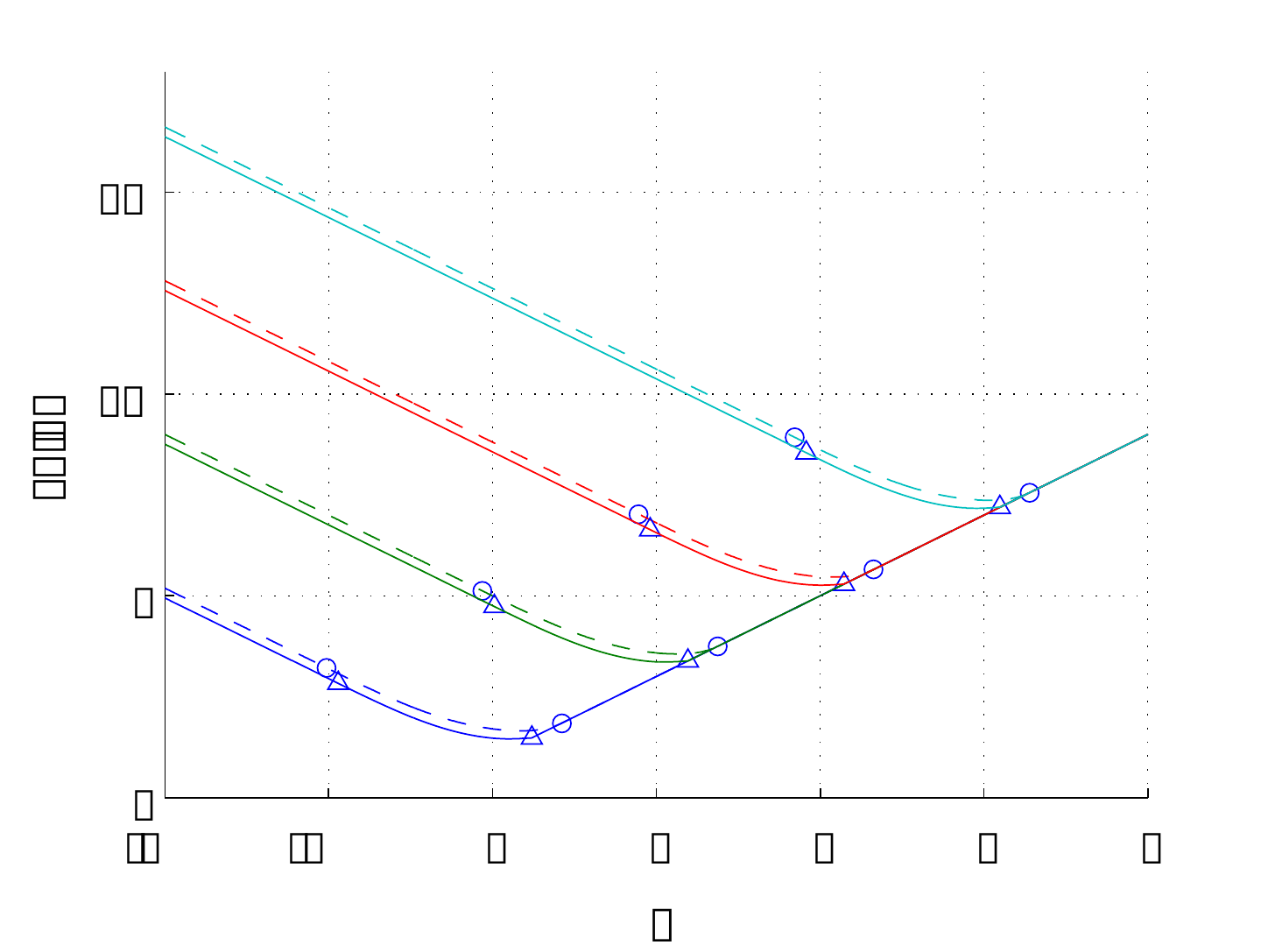}  \\
$\alpha =0.5,1,2,10$ &$\beta = -2,0,2,4$ \vspace{0.3cm} \\
 \includegraphics[scale=0.58]{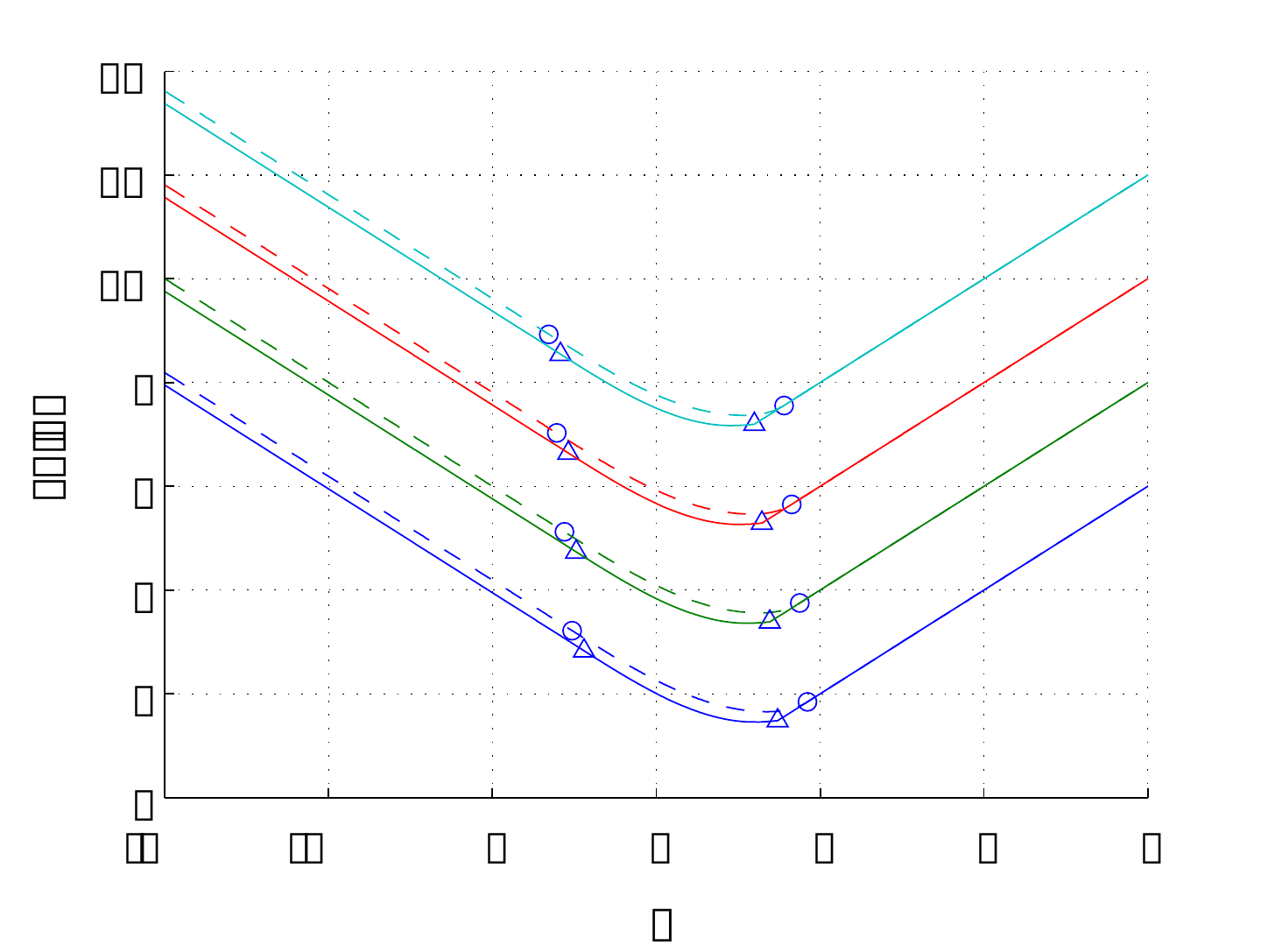} & \includegraphics[scale=0.58]{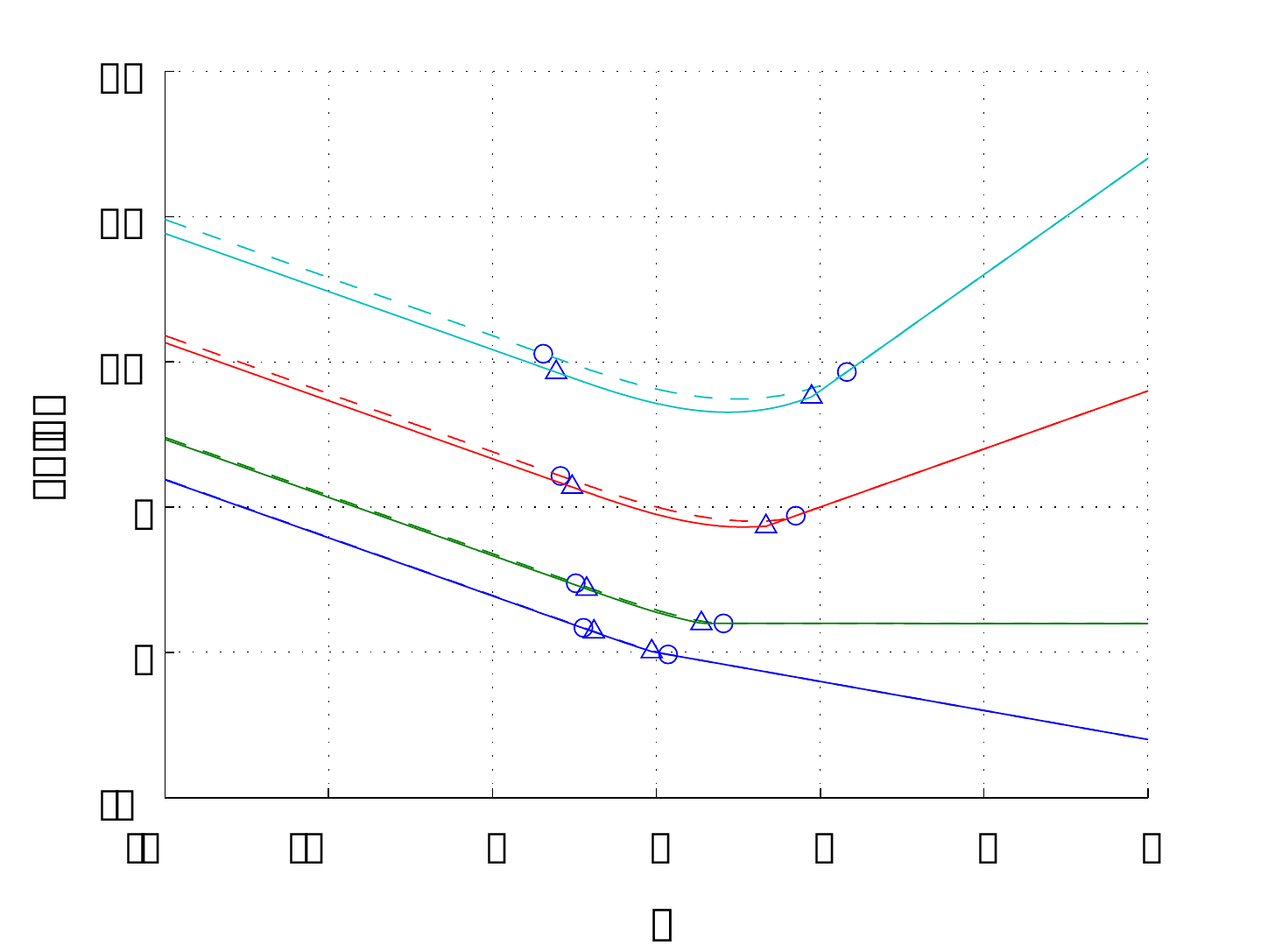}  \\ $C = -2,0,2,4$&  $K =-0.5,0,1,2$  \vspace{0.3cm} \\
\end{tabular}
\end{minipage}
\caption{The value functions for the spectrally positive \lev case. }  \label{figure_with_diffusion}
\end{center}
\end{figure}

\section*{Acknowledgements} The authors thank the two anonymous referees for their thorough reviews and insightful comments that help improve the presentation of the paper. The first author is indebted  to V.\ Rivero for  helpful discussions; the support received from Conacyt through the Laboratory LEMME is also greatly appreciated. The second author thanks M.\ Fukushima and H.\ Nagai for valuable comments and is supported by MEXT KAKENHI grant numbers  22710143 and 26800092, JSPS KAKENHI grant number 23310103, the Inamori foundation research grant, and the Kansai University subsidy for supporting young scholars 2014.

\bibliographystyle{abbrv}
\bibliographystyle{apalike}

\bibliographystyle{agsm}
\bibliography{dual_model_bib}

\def\cprime{$'$}
\begin{thebibliography}{10}

\bibitem{Albrecher_2012}
H.~Albrecher and S.~Thonhauser.
\newblock On optimal dividend strategies in insurance with a random time
  horizon.
\newblock {\em Stochastic processes, finance and control}, pages 157--180,
  2012.

\bibitem{Alili2005}
L.~Alili and A.~E. Kyprianou.
\newblock Some remarks on first passage of {L}\'evy processes, the {A}merican
  put and pasting principles.
\newblock {\em Ann. Appl. Probab.}, 15(3):2062--2080, 2005.

\bibitem{Asmussen_2004}
S.~Asmussen, F.~Avram, and M.~R. Pistorius.
\newblock Russian and {A}merican put options under exponential phase-type
  {L}\'evy models.
\newblock {\em Stochastic Process. Appl.}, 109(1):79--111, 2004.

\bibitem{Avram_2004}
F.~Avram, A.~E. Kyprianou, and M.~R. Pistorius.
\newblock Exit problems for spectrally negative {L}\'evy processes and
  applications to ({C}anadized) {R}ussian options.
\newblock {\em Ann. Appl. Probab.}, 14(1):215--238, 2004.

\bibitem{Avram_et_al_2007}
F.~Avram, Z.~Palmowski, and M.~R. Pistorius.
\newblock On the optimal dividend problem for a spectrally negative {L}\'evy
  process.
\newblock {\em Ann. Appl. Probab.}, 17(1):156--180, 2007.

\bibitem{BauKyrianouPardo2011}
E.~Baurdoux, A.~Kyprianou, and J.~Pardo.
\newblock The {G}apeev-{K}\"{u}hn stochastic game driven by a spectrally
  positive {L}\'{e}vy process.
\newblock {\em Stochastic Process. Appl.}, 121(6):1266--1289, 2011.

\bibitem{Baurdoux2008}
E.~Baurdoux and A.~E. Kyprianou.
\newblock The {M}c{K}ean stochastic game driven by a spectrally negative
  {L}\'evy process.
\newblock {\em Electron. J. Probab.}, 13:no. 8, 173--197, 2008.

\bibitem{Bayraktar_2012}
E.~Bayraktar, A.~E. Kyprianou, and K.~Yamazaki.
\newblock On optimal dividends in the dual model.
\newblock {\em ASTIN Bulletin}, 43(3).

\bibitem{Bayraktar_Young_2011}
E.~Bayraktar and V.~R. Young.
\newblock Proving regularity of the minimal probability of ruin via a game of
  stopping and control.
\newblock {\em Finance Stoch.}, 15:785--818, 2011.

\bibitem{Bensoussan_2009}
L.~Benkherouf and A.~Bensoussan.
\newblock Optimality of an {$(s,S)$} policy with compound {P}oisson and
  diffusion demands: a quasi-variational inequalities approach.
\newblock {\em SIAM J. Control Optim.}, 48(2):756--762, 2009.

\bibitem{Bensoussan_2005}
A.~Bensoussan, R.~H. Liu, and S.~P. Sethi.
\newblock Optimality of an {$(s,S)$} policy with compound {P}oisson and
  diffusion demands: a quasi-variational inequalities approach.
\newblock {\em SIAM J. Control Optim.}, 44(5):1650--1676 (electronic), 2005.

\bibitem{Chan_2009}
T.~Chan, A.~E. Kyprianou, and M.~Savov.
\newblock Smoothness of scale functions for spectrally negative {L}\'evy
  processes.
\newblock {\em Probab. Theory Relat. Fields}, 150:691--708, 2011.

\bibitem{Davis_Zervos_1994}
M.~H.~A. Davis and M.~Zervos.
\newblock A problem of singular stochastic control with discretionary stopping.
\newblock {\em Ann. of Appl. Probab.}, 4:226--240, 1994.

\bibitem{Leung_Yamazaki_2011}
E.~Egami, T.~Leung, and K.~Yamazaki.
\newblock Default swap games driven by spectrally negative {L}\'{e}vy
  processes.
\newblock {\em Stochastic Process. Appl.}, 123(2):347--384, 2013.

\bibitem{Egami-Yamazaki-2010-1}
M.~Egami and K.~Yamazaki.
\newblock Precautional measures for credit risk management in jump models.
\newblock {\em Stochastics}, 85(1):111--143, 2013.

\bibitem{Egami_Yamazaki_2010_2}
M.~Egami and K.~Yamazaki.
\newblock Phase-type fitting of scale functions for spectrally negative
  {L}\'evy processes.
\newblock {\em J. Comput. Appl. Math.}, 264:1--22, 2014.

\bibitem{Hansen_2001}
L.~Hansen and T.~Sargent.
\newblock Robust control and model uncertainty.
\newblock {\em Am. Econ. Rev.}, 91(2):60--66, 2001.

\bibitem{Hansen_2006}
L.~Hansen, T.~Sargent, G.~Turmuhambetova, and N.~Williams.
\newblock Robust control and model misspecification.
\newblock {\em J. Econ. Theory}, 128:45--90, 2006.

\bibitem{Hernandez_Simon_Zervos_2012}
D.~Hern\'andez-Hern\'andez, R.~S. Simon, and M.~Zervos.
\newblock A zero-sum game between a singular stochastic controller and a
  discretionary stopper.
\newblock {\em Ann. Appl. Probab.}, forthcoming.

\bibitem{Karatzas_Shreve_1984}
I.~Karatzas and S.~Shreve.
\newblock Connections between optimal stopping and singular stochastic control
  i. monotone follower problems.
\newblock {\em SIAM J. Control Optim.}, 22(6):856--877, 1984.

\bibitem{Karatzas_Sudderth_2001}
I.~Karatzas and W.~Sudderth.
\newblock The controller-and-stopper game for a linear diffusion.
\newblock {\em Ann. Probab.}, 29:1111--1127, 2001.

\bibitem{Karatzas_Wong_2000}
I.~Karatzas and H.~Wang.
\newblock A barrier option of american type.
\newblock {\em Appl.\ Math.\ Optim.}, 42:259--279, 2000.

\bibitem{Kuznetsov2013}
A.~Kuznetsov, A.~Kyprianou, and V.~Rivero.
\newblock The theory of scale functions for spectrally negative {L}\'evy
  processes.
\newblock {\em Springer Lecture Notes in Mathematics}, 2061:97--186, 2013.

\bibitem{Kyprianou_2006}
A.~E. Kyprianou.
\newblock {\em Introductory lectures on fluctuations of {L}\'evy processes with
  applications}.
\newblock Universitext. Springer-Verlag, Berlin, 2006.

\bibitem{Kyprianou_Palmowski_2007}
A.~E. Kyprianou and Z.~Palmowski.
\newblock Distributional study of de {F}inetti's dividend problem for a general
  {L}\'evy insurance risk process.
\newblock {\em J. Appl. Probab.}, 44(2):428--443, 2007.

\bibitem{Kyprianou_Surya_2007}
A.~E. Kyprianou and B.~A. Surya.
\newblock Principles of smooth and continuous fit in the determination of
  endogenous bankruptcy levels.
\newblock {\em Finance Stoch.}, 11(1):131--152, 2007.

\bibitem{Leung_Yamazaki_2010}
T.~Leung and K.~Yamazaki.
\newblock American step-up and step-down default swaps under {L}\'{e}vy models.
\newblock {\em Quant. Finance}, 13(1):137--157, 2013.

\bibitem{Loeffen_2008}
R.~L. Loeffen.
\newblock On optimality of the barrier strategy in de {F}inetti's dividend
  problem for spectrally negative {L}\'evy processes.
\newblock {\em Ann. Appl. Probab.}, 18(5):1669--1680, 2008.

\bibitem{Maitra_Sudderth_1996}
A.~Maitra and W.~D. Sudderth.
\newblock The gambler and the stopper.
\newblock In {\em Statistics, Probability and Game Theory: Papers in Honor of
  David Blackwell}, volume~30. T.\,S.\,Ferguson, L.\,S.\,Shapley and
  J.\,B.\,MacQueen (eds), IMS Lecture Notes Monograph Series IMS, Hayward, CA.,
  1996.

\bibitem{Pistorius_2004}
M.~R. Pistorius.
\newblock On exit and ergodicity of the spectrally one-sided {L}\'evy process
  reflected at its infimum.
\newblock {\em J. Theoret. Probab.}, 17(1):183--220, 2004.

\bibitem{MR2273672}
P.~E. Protter.
\newblock {\em Stochastic integration and differential equations}, volume~21 of
  {\em Stochastic Modelling and Applied Probability}.
\newblock Springer-Verlag, Berlin, 2005.
\newblock Second edition. Version 2.1, Corrected third printing.

\bibitem{Surya_2008}
B.~A. Surya.
\newblock Evaluating scale functions of spectrally negative {L}\'evy processes.
\newblock {\em J. Appl. Probab.}, 45(1):135--149, 2008.

\bibitem{Yamazaki_2013}
K.~Yamazaki.
\newblock Inventory control for spectrally positive {L}\'evy demand processes.
\newblock {\em arXiv:1303.5163}, 2013.

\end{thebibliography}

\end{document}